\tikzset{>=latex}
\newcommand{\E}{\mathbb{E}}
\newcommand{\G}{G}
\newcommand{\liealgebra}{\mathfrak{g}}
\newcommand{\pair}[1]{\ensuremath{\left\langle #1 \right\rangle}}
\renewcommand{\Re}{\ensuremath{\mathbb{R}}}
\newcommand{\deriv}[2]{\ensuremath{\frac{\partial #1}{\partial #2}}}
\newcommand{\braces}[1]{\ensuremath{\left\{ #1 \right\}}}
\newcommand{\parenth}[1]{\ensuremath{\left( #1 \right)}}
\renewcommand{\div}{\mathrm{div}}
\newcommand{\grad}{\mathrm{grad}}
\newcommand{\Sph}{\mathsf{S}}
\newcommand{\SO}[1]{\mathrm{SO}(#1)}
\newcommand{\so}[1]{\mathfrak{so}(#1)}
\newcommand{\Hessmatrix}{\overline{\mathrm{H}}}
\newcommand{\g}{\ensuremath{\mathsf{g}}}
\newcommand{\Ad}{\ensuremath{\mathrm{Ad}}}
\newcommand{\ad}{\ensuremath{\mathrm{ad}}}
\newcommand{\conn}{\ensuremath{\tilde\nabla}}
\newcommand{\Aff}{\ensuremath{\mathsf{Aff}}}
\newcommand{\Hyper}{\ensuremath{\mathbb{H}}}
\newtheorem{definition}{Definition}
\newtheorem{theorem}{Theorem}
\newtheorem{corollary}{Corollary}
\newtheorem{remark}{Remark}
\newtheorem{prop}{Proposition}
\title{Geometric Interpretation of Brownian Motion on Riemannian Manifolds}
\author{Taeyoung Lee and Gregory S. Chirikjian
    \thanks{Taeyoung Lee, Mechanical and Aerospace Engineering, George Washington University, Washington, DC 20052, {\tt \{tylee\}@gwu.edu}}%
    \thanks{Gregory S. Chirikjian, Willis F. Harrington Professor of Mechanical Engineering, University of Delaware, Newark DE 19716 {\tt gchirik@udel.edu}}
    \thanks{\textsuperscript{\footnotesize\ensuremath{*}}This research has been supported in part by AFOSR MURI FA9550-23-1-0400, and ONR N00014-23-1-2850.}
}
\begin{document}
	
\maketitle

\begin{abstract}
    This paper presents a unified geometric framework for Brownian motion on manifolds, encompassing intrinsic Riemannian manifolds, embedded submanifolds, and Lie groups.  
    The approach constructs the stochastic differential equation by injecting noise along each axis of an orthonormal frame and designing the drift term so that the resulting generator coincides with the Laplace--Beltrami operator.  
    Both Stratonovich and It\^{o} formulations are derived explicitly, revealing the geometric origin of curvature-induced drift.  
    The drift is shown to correspond to the covariant derivatives of the frame fields for intrinsic manifolds, the mean curvature vector for embedded manifolds, and the adjoint-trace term for Lie groups, which vanishes for unimodular cases.  
    The proposed formulation provides a geometrically transparent and mathematically consistent foundation for diffusion processes on nonlinear configuration spaces.
\end{abstract}

\section{Introduction}

Brownian motion is one of the most fundamental stochastic processes in probability theory and mathematical physics.  
Originally introduced to describe the random motion of particles suspended in a fluid~\cite{chirikjian2000engineering}, it has since become a cornerstone of modern stochastic analysis and a canonical model for diffusion phenomena.  
In Euclidean space, Brownian motion admits several equivalent characterizations.  
From a probabilistic perspective, it is defined as the \emph{Wiener process}---a continuous stochastic process with independent, stationary Gaussian increments and almost surely continuous sample paths~\cite{oksendal2003stochastic}.  
Alternatively, from an analytical viewpoint, Brownian motion can be characterized as the diffusion process whose infinitesimal generator equals one-half of the Laplacian operator~\cite{oksendal2003stochastic}.  
This interpretation establishes a direct connection between stochastic differential equations and partial differential equations, as the Kolmogorov forward and backward equations associated with Brownian motion correspond to the classical heat equation.

This characterization via the Laplacian provides a natural pathway for generalization beyond Euclidean space.  
On a Riemannian manifold, the Laplacian is replaced by the \emph{Laplace--Beltrami operator}~\cite{marsden2013introduction}, denoted by $\Delta$, which encodes the intrinsic geometry of the manifold.  
Accordingly, Brownian motion on a Riemannian manifold is defined as the diffusion process whose generator equals $\tfrac{1}{2}\Delta$~\cite{elworthy1982stochastic,ikeda1989stochastic,hsu2002stochastic}.  
This geometric generalization forms a central object in stochastic geometry and geometric analysis, providing a probabilistic interpretation of heat diffusion on curved spaces and a stochastic counterpart to geodesic flow.  
It also underlies many modern models in mechanics, control, and robotics, where system states evolve on nonlinear configuration spaces.  

Understanding how Brownian motion interacts with curvature and topological structure thus lies at the intersection of differential geometry and stochastic analysis, motivating the geometric formulations developed in this paper.  
Several formulations of Brownian motion on manifolds have been developed, each reflecting a different geometric perspective.  

\paragraph*{Intrinsic Formulation}
The \emph{intrinsic} or \emph{orthonormal frame-bundle} approach, pioneered by Eells, Elworthy, and Malliavin~\cite{elworthy1982stochastic,ikeda1989stochastic,hsu2002stochastic}, constructs Brownian motion on a Riemannian manifold as the projection of a diffusion process evolving on the manifold’s orthonormal frame bundle.  
Specifically, at each point of the manifold, this bundle collects all possible orthonormal bases of the tangent space, and a Euclidean Brownian motion is lifted to the bundle, where it evolves in a manner consistent with parallel transport along random paths.  
Projecting the resulting process back to the manifold yields a Brownian motion whose generator is $\tfrac{1}{2}\Delta$.  

This construction provides an elegant and fully intrinsic definition of stochastic processes on manifolds and later became the foundation for defining general stochastic differential equations on manifolds.  
However, its generality comes at the cost of complexity.  
Because the process evolves on the high-dimensional frame bundle rather than the manifold itself, it requires simultaneously tracking the base point and a moving orthonormal frame.  
In practice, this entails expressing connection forms and curvature terms in local coordinates, making analytical derivations and numerical implementations cumbersome.  
Moreover, the resulting drift term is typically implicit and lacks a transparent geometric interpretation in terms of the curvature of the manifold.

\paragraph*{Extrinsic Formulation}
A complementary \textit{extrinsic} strategy, particularly suited for embedded manifolds, is to formulate the stochastic differential equation in the ambient Euclidean space~\cite{elworthy1982stochastic,hsu2002stochastic}.  
This approach involves projecting the Euclidean Brownian motion onto the tangent bundle of the embedded manifold and introducing a drift term to ensure that the resulting process remains on the manifold.  
In particular, Lewis~\cite{lewis1986brownian} provides an explicit It\^{o} formulation for Brownian motion on a submanifold of Euclidean space given by a level set of a smooth function. 

However, most existing results, including~\cite{lewis1986brownian}, are restricted to the ambient Euclidean setting and do not naturally extend to intrinsic manifolds or to Lie groups endowed with non-Euclidean metrics.  
Furthermore, the derivation is typically confined to the It\^{o} form of the stochastic differential equation, while the corresponding Stratonovich formulation---which is geometrically more natural and directly compatible with the manifold’s connection---is usually omitted.  
As a result, the relationship between the It\^{o} and Stratonovich representations, and how the curvature-induced drift arises from their conversion, has not been systematically clarified.  
Finally, the geometric meaning of this drift has been established only in the Euclidean embedding sense, without unifying it with the intrinsic Laplace--Beltrami framework or with invariant constructions on Lie groups.  

\paragraph*{Lie Group Formulation}
For manifolds possessing an inherent group structure, such as matrix Lie groups, their algebraic properties can be exploited to define globally consistent stochastic differential equations intrinsically.  
Specifically, in his seminal work~\cite{ito1950brownian}, It\^{o} provided the first mathematical definition and construction of Brownian motion on a Lie group by characterizing its generator as a second-order operator with constant coefficients in the Lie algebra, and then using this algebraic form to build a corresponding stochastic differential equation in local coordinates.  
Later, by employing left-invariant vector fields and the Maurer--Cartan form, the stochastic process can be expressed directly in the Lie algebra, bypassing the need for either local charts or external embeddings~\cite{chirikjian2011stochastic}.  
These ideas have been applied to specific Lie groups in~\cite{piggott2016geometric,solo2010attitude}.  

Despite their elegance, existing Lie-group formulations remain largely algebraic and lack an explicit geometric interpretation of the drift term.  
It\^{o}’s original construction predates Stratonovich calculus and expresses Brownian motion through coordinate-dependent It\^{o} equations, without linking the resulting drift to the curvature of the manifold.  
Later extensions using left-invariant vector fields, including~\cite{chirikjian2011stochastic,piggott2016geometric,solo2010attitude}, focus on specific matrix groups but do not explicitly connect these formulations to the general theory of Brownian motion on Riemannian manifolds.  
In particular, the crucial simplification that arises for the broad class of unimodular Lie groups—where the Stratonovich drift vanishes completely—and the contrasting treatment required for non-unimodular cases have not been presented within a unified geometric framework.

In short, while Brownian motion on a manifold has been investigated extensively, each study focuses on a specific formulation, and there is a lack of geometric interpretation that connects these results in a unified and comprehensive way.  

\paragraph*{Objectives}
The objective of this paper is to bridge these gaps by providing a unified and geometrically intuitive development of stochastic differential equations for Brownian motion, without resorting to the frame-bundle formulation.  
First, we systematically derive the intrinsic, extrinsic, and Lie-group formulations from the common axiomatic definition of Brownian motion as the process generated by the Laplace--Beltrami operator.  
Second, we provide explicit, self-contained proofs for the drift terms that arise in both the It\^{o} and Stratonovich forms for each framework, giving them clear geometric and algebraic interpretations.  
Third, we specialize these results to Lie groups, demonstrating how the geometric drift simplifies to a purely algebraic expression and we prove that this drift vanishes for the entire class of unimodular Lie groups.  
Finally, the theoretical results are validated and illustrated with a comprehensive suite of examples.

\paragraph*{Contributions}
Overall, this work establishes a geometrically transparent and mathematically consistent foundation for Brownian motion on manifolds.  
It bridges differential geometry and stochastic analysis by identifying the drift term as a fundamental geometric quantity while maintaining compatibility with both intrinsic and extrinsic representations.  
The resulting framework not only generalizes prior constructions but also provides explicit, coordinate-free stochastic differential equations suitable for analytical study and numerical implementation on curved configuration spaces.  

The main contributions are summarized as follows:
\begin{itemize}
    \item Developed a unified geometric framework to construct Brownian motion on intrinsic manifolds (\Cref{thm:Brownian_motion}), embedded submanifolds (\Cref{thm:Brownian_motion_embedded}), and Lie groups (\Cref{thm:Brownian_motion_G,thm:Brownian_motion_G_Emb}) in a systematic way.
    \item Derived the Stratonovich drift term explicitly as an intrinsic geometric quantity obtained by the covariant derivatives of the frame, which has not been reported in a direct coordinate-free form for an arbitrary manifold (\Cref{thm:Brownian_motion,thm:Brownian_motion_embedded}).
    \item Established the It\^{o} drift term vanishes for an intrinsic formulation (\Cref{thm:Brownian_motion}), and it corresponds to the mean curvature vector for an extrinsic formulation (\Cref{thm:Brownian_motion_embedded}). 
        While \cite{lewis1986brownian} derived the mean curvature vector in Euclidean space, this works extends it by rigorously proving its equivalence to the Stratonovich formulation.
    \item Generalized Brownian motion on a Riemannian manifold to Lie groups to show that the Stratonovich drift is expressed by the co-adjoint operator, linking algebraic and geometric structures explicitly (\Cref{thm:Brownian_motion_G,thm:Brownian_motion_G_Emb}). 
        Further, it is shown that this term vanishes for unimodular group, and it is simplified for non-unimodular group---result not previously established (\Cref{cor:Brownian_motion_G_unimodular,cor:Brownian_motion_G_unimodular_Emb}).
\end{itemize}

The organization of this paper is illustrated in \Cref{fig:overview}.

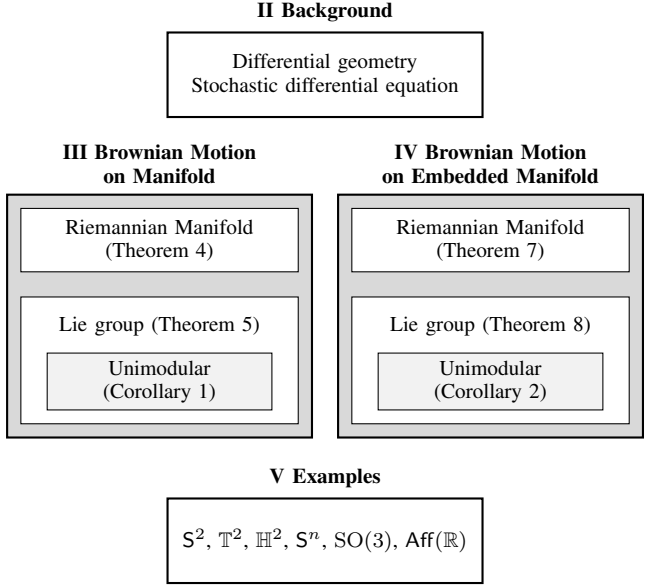
\begin{figure}
    \footnotesize
    \tikzstyle{block} = [draw, fill=gray!0, rectangle, minimum height=4em, minimum width=6em, text width=3.5cm, align=center]
    \tikzstyle{inner_block} = [draw, fill=gray!10, rectangle, minimum height=2.5em, minimum width=4em, text width=2.8cm, align=center]
    \tikzstyle{outer_block} = [thick, inner sep=5pt, fill=gray!30, draw] 
    \centerline{
        \begin{tikzpicture}
            \node[block, minimum height=3em] (M) at (0,0) {Riemannian Manifold (\Cref{thm:Brownian_motion})};
            \node[block, minimum height=6em] (G) at (0,-1.6) {};
            \node at ([yshift=0.45cm]G.center) {Lie group (\Cref{thm:Brownian_motion_G})};
            \node[inner_block, yshift=-1.0em] (UG) at (G.center) {Unimodular\\(\Cref{cor:Brownian_motion_G_unimodular})};
            \begin{scope}[on background layer]
                \node[outer_block, fit=(M) (G) (UG)] (RM) {};
            \end{scope}
            \node[block, minimum height=3em] (EM) at (4.4,0) {Riemannian Manifold (\Cref{thm:Brownian_motion_embedded})};
            \node[block, minimum height=6em] (EG) at (G-|EM) {};
            \node at ([yshift=0.45cm]EG.center) {Lie group (\Cref{thm:Brownian_motion_G_Emb})};
            \node[inner_block, yshift=-1.0em] (EUG) at (EG.center) {Unimodular\\(\Cref{cor:Brownian_motion_G_unimodular_Emb})};
            \begin{scope}[on background layer]
                \node[outer_block, fit=(EM) (EG) (EUG)] (ERM) {};
            \end{scope}
            \node[text width=3cm, align=center] at (RM.north) [above=1pt] {\textbf{\ref{sec:BMM} Brownian Motion\\ on Manifold}};
            \node[text width=3cm, align=center] at (ERM.north) [above=1pt] {\textbf{\ref{sec:BMEM} Brownian Motion\\ on Embedded Manifold}};
            
            \coordinate (mid) at ($ (RM.center)!0.5!(ERM.center) $);
            \node[block, thick, text width=4cm] (ex) at (mid |-0, -4) {$\Sph^2$, $\mathbb{T}^2$, $\mathbb{H}^2$, $\Sph^n$, $\SO3$, $\Aff(\Re)$ };
            \node at (ex.north) [above=1pt] {\textbf{\ref{sec:ex} Examples}};

            \node[block, thick, text width=4cm] (Background) at (mid |-0, 2.2) {Differential geometry\\Stochastic differential equation};
            \node at (Background.north) [above=1pt] {\textbf{\ref{sec:Background} Background}};
            
        \end{tikzpicture}
    }
    \caption{Overview of paper: \Cref{sec:Background} presents background materials. Then, \Cref{sec:BMM} develops Brownian motion on Riemannian manifolds and Lie groups along with a special case when the Lie group is unimodular. These are repeated for Riemannian manifolds and Lie groups embedded in the Euclidean space in \Cref{sec:BMEM}, which are followed by examples in \Cref{sec:ex}.}\label{fig:overview}
\end{figure}

\section{Background} \label{sec:Background}

We briefly review key concepts from differential geometry~\cite{lee2006riemannian,marsden2013introduction} and stochastic differential equation~\cite{ikeda1989stochastic,gliklikh2010global,chirikjian2009stochastic,chirikjian2011stochastic} that will be used throughout this paper.

\subsection{Riemannian Manifold}\label{sec:diff_geo}

Let $(M,\g)$ be a smooth $n$-dimensional Riemannian manifold. 
For a point $p \in M$, the \emph{tangent space} $T_p M$ is the vector space consisting of all possible tangent vectors at the point. 
The \textit{tangent bundle} $TM$ is the disjoint union of the tangent spaces at all points of the manifold.
A \emph{vector field} $X:M\rightarrow TM $ is a smooth map that assigns to each $p\in M$ a tangent vector $X(p) \in T_p M$.
The set of all vector fields on a manifold $M$ is denoted by $\mathfrak{X}(M)$.

Let $C^\infty(M)$ be the set of smooth real-valued functions on $M$.
For $f\in C^\infty (M)$ and $X\in\mathfrak{X}(M)$, the \textit{Lie derivative of $f$ along $X$} is the directional derivative given by
\begin{align}
    \mathcal{L}_X f = X[f] \triangleq df(X) \in C^\infty(M),
\end{align}
where $df$ is the exterior derivative, or the differential of $f$.  

For $X,Y\in\mathfrak{X}(M)$, the \textit{Jacobi--Lie} bracket of $X$ and $Y$, denoted by $[\cdot, \cdot]:\mathfrak{X}(M)\times\mathfrak{X}(M)\rightarrow\mathfrak{X}(M)$ is the unique vector field defined by
\begin{align}
    ([X, Y]) [f] = X[Y[f]] - Y[X[f]]. \label{eqn:Jac-Lie}
\end{align}
This coincides with the \textit{Lie derivative of $Y$ along $X$}, denoted by $\mathcal{L}_X Y$.
The Jacobi--Lie bracket is anticommutative, and it satisfies the following Jacobi identity:
\begin{align}
    [[X,Y],Z] + [[Y,Z],X] + [[Z, X],Y] = 0, \label{eqn:Jacobi}
\end{align}
for any $X,Y,Z\in\mathfrak{X}(M)$. 

An \textit{affine connection}, $\conn:\mathfrak{X}(M)\times\mathfrak{X}(M)\rightarrow\mathfrak{X}(M)$ is a map satisfying the following three properties:
\begin{gather}
    \conn_{f_1X + f_2Y} Z = f_1\conn_X Z + f_2 \conn_Y Z,\label{eqn:cov_deriv_lower_linear}\\
    \conn_X (c_1 Y  + c_2 Z) = c_1 \conn_X Y + c_2 \conn_X Z,\\
    \conn_X{fY} = (X[f])Y + f\conn_X Y,\label{eqn:cov_deriv_Leibniz}
\end{gather}
for any $X,Y,Z\in\mathfrak{X}(M)$, $f_1, f_2\in C^\infty(M)$, and $c_1,c_2\in\Re$. 
Also, $\conn_X Y$ is referred to as the \textit{covariant derivative} of $Y$ along $X$.

While both of the Lie derivative and covariant derivative map two vector fields into another, the Lie derivative $\mathcal{L}_X Y$ has the interpretation of representing the rate of change of $Y$ along the flow of $X$, and the covariant derivative $\conn_X Y$ compares vectors in different tangent spaces in a way consistent with the chosen geometry.
Further, the Lie derivative is intrinsically and uniquely formulated, but there are infinitely many covariant derivatives depending on the choice of the connection. 

A \emph{Riemannian metric} $\g$ on a manifold $M$ is a smooth, symmetric, positive-definite $(0,2)$-tensor field.
That is, for each point $p \in M$, the bilinear map $\g_p : T_p M \times T_p M \to \mathbb{R}$ defines an inner product on the tangent space at $p$, where the subscript $p$ indicates that the metric acts on $T_p M$.

A \emph{frame} on $U \subset M$ is a collection of $n$ smooth vector fields $\{E_1, \ldots, E_n\}$ on $U$ such that $\{E_1(p),\ldots,E_n(p)\}$ forms a basis for $T_p M$ at each $p\in U$. 
And, the frame is \emph{orthonormal} if $\g(E_i,E_j) = \delta_{ij}$ for all $i,j\in\{1,\ldots, n\}$.

Given a Riemannian metric, we can formulate the unique torsion-free connection $\nabla:\mathfrak{X}(M)\times\mathfrak{X}(M)\rightarrow\mathfrak{X}(M)$ that is compatible with the metric,  i.e., 
\begin{align}
    X[g(Y,Z)] = \g(\nabla_X Y, Z) + \g(Y, \nabla_X Z), \label{eqn:cov_deriv_comp}
\end{align}
for all $X,Y,Z\in\mathfrak{X}(M)$.
This is referred to as the \emph{Levi--Civita connection}.
As an affine connection, the Levi--Civita connection satisfies all three properties of \eqref{eqn:cov_deriv_lower_linear}--\eqref{eqn:cov_deriv_Leibniz}.
In the subsequent development, the \textit{covariant derivative} refers to the vector field obtained by the Levi--Civita connection.
For a vector field $Y=Y^k\partial_{k}$, its covariant derivative in coordinates is
\[
\nabla_{\partial_{i}}Y=(\partial_{i}Y^k+\Gamma^{k}_{ij}Y^j)\partial_{k},
\]
where $\Gamma^{k}_{ij}$ are the Christoffel symbols determined by
\begin{align}
    \Gamma^{k}_{ij}=\frac12\,\g^{k m}\big(\partial_i \g_{jm}+\partial_j \g_{im}-\partial_m \g_{ij}\big),\label{eqn:Gamma}
\end{align}
with $[\g^{ij}]=[\g_{ij}]^{-1}$.
Throughout this paper, we adopt the Einstein summation convention: 
whenever an index variable appears once in an upper position and once in a lower position within a single term, summation over that index is implied.

Next, we can define the following operators using the Riemannian metric.
For $f\in C^\infty(M)$, the \textit{gradient}, $\grad:C^\infty(M)\rightarrow\mathfrak{X}(M)$ is defined such that
\begin{align}
    \g(\grad\,f,X) =df (X), \label{eqn:grad}
\end{align}
for any $X\in\mathfrak{X}(M)$.
The \emph{divergence} $\div:\mathfrak{X}(M)\rightarrow C^\infty(M)$ of $X$ is the trace of the linear map $Y\rightarrow \nabla_Y X$~\cite[page 88]{lee2006riemannian}, given by
\begin{align}
\div X = \mathrm{tr} \big( Y \mapsto \nabla_Y X \big). \label{eqn:div}
\end{align}
The \textit{Laplace--Beltrami operator} $\Delta:C^\infty(M)\rightarrow C^\infty(M)$ is
\begin{align}
    \Delta f = \div(\grad f). \label{eqn:Laplace}
\end{align}
Finally, the \emph{Hessian} is the symmetric $(0,2)$-tensor defined by
\begin{align}
    \mathrm{Hess}_f(X,Y) = \g(\nabla_X(\grad f), Y)
    = X[Y[f]] - (\nabla_X Y)[f]. \label{eqn:Hess}
\end{align}

\subsection{Lie Group}

A \textit{Lie group} $G$ is a differentiable manifold that has group properties that are consistent with its manifold structure in the sense that the group operation is smooth. 
The \textit{group action} of a Lie group $G$ on a manifold $M$ is a smooth map $\phi: G \times M \to M$ satisfying $\phi(e, p) = p$ and $\phi(g, \phi(h, p)) = \phi(gh, p)$ for all $g, h \in G$ and $p \in M$, where $e$ is the identity element of $G$.\footnote{Throughout this paper, $\g$ denotes a Riemannian metric, and $g\in\G$ denotes an element of a Lie group $G$.}
The group acts on itself through \textit{left translation} $L_g(h) = gh$ and \textit{right translation} $R_g(h) = hg$. 

Next, \textit{Lie algebra} $\liealgebra$ is defined as the tangent space to the group at the identity element, i.e., $\mathfrak{g} = T_e G$, 
which is endowed with a bilinear  operation called the \textit{Lie bracket} $[\cdot, \cdot]: \mathfrak{g} \times \mathfrak{g} \to \mathfrak{g}$.
The Lie bracket is constructed as a specific case of the Jacobi--Lie bracket introduced in \eqref{eqn:Jac-Lie}.
For $\eta\in\liealgebra$, let $X_\eta(g)\in\mathfrak{X}(G)$ be a \textit{left-invariant vector field} obtained by the pushforward of $\eta$ by the left translation. 
More explicitly, 
\begin{align}
    X_\eta(g) = (L_g)_* \eta = (T_e L_g) (\eta) = g\eta,  \label{eqn:X_eta}
\end{align}
where $T_eL_g:\liealgebra\rightarrow T_g G$ is the tangential map of $L_g$, and the last expression is a common shorthand motivated by the fact that the left-invariant vector field is simply obtained by the product of $g$ and $\eta$ in a matrix Lie group. 
Throughout this paper, the tangential map of the left (or right) translation is simply written as $(T_e L_g)(\eta) = g\eta$ (or $(T_eR_g)(\eta)=\eta g$). 
The Lie bracket of $\eta, \zeta \in \liealgebra$ is then defined by the Jacobi--Lie bracket of their corresponding left-invariant vector fields, evaluated at the identity, i.e., 
\begin{align}
    [\eta, \zeta] = [X_\eta, X_\zeta](e).\label{eqn:bracket_g}
\end{align}

Given $g\in G$, the \textit{inner automorphism} ${I}_g:G\rightarrow G$ is defined as ${I}_g (h) = g h g^{-1}$. 
The \textit{adjoint operator} $\Ad_g :\liealgebra\rightarrow\liealgebra$ is the tangential map of ${I}_g(h)$ with respect to $h$ at $h=e$ along the direction $\eta\in\liealgebra$, i.e. $\Ad_g \eta = (T_e {I}_g) (\eta)$. 
For $\xi\in\liealgebra$, the \textit{ad operator} $\ad_\xi: \liealgebra\rightarrow\liealgebra$ is obtained by 
\begin{align*}
    \ad_\xi \eta = \frac{d}{d\epsilon}\bigg|_{\epsilon=0} \Ad_{\exp \epsilon \xi} \eta,
\end{align*}
which coincides with the Lie bracket, i.e. $\ad_\xi\eta =[\xi,\eta]$.

Let $\langle\cdot,\cdot\rangle$ denote the natural pairing between $\mathfrak g^*$ and $\mathfrak g$.
The \textit{coadjoint operator} $\Ad_g^*:\liealgebra^*\rightarrow\liealgebra^*$ is the dual of $\Ad_g$ defined by $\langle \Ad_g^* \alpha, \eta\rangle = \langle \alpha, \Ad_g \eta \rangle $ for any $\eta\in\liealgebra$ and $\alpha\in\liealgebra^*$. 
Similarly, \textit{co-ad operator} $\ad_\xi^*:\liealgebra^*\rightarrow\liealgebra^*$ is the dual of $\ad_\xi$ defined by $\langle \ad_\xi^* \alpha, \eta\rangle = \langle \alpha, \ad_\xi \eta \rangle $.


\subsection{Stochastic Differential Equation}

A Wiener process $W(t)$ is a continuous-time stochastic process characterized by three main properties: it starts at zero ($W(0)=0$), has independent increments, and the increment $W(t)-W(s)$ for $t>s$ is normally distributed with a zero mean and variance $t-s$. 
The non-differentiable nature of its sample paths motivates a special formulation of calculus.

An \emph{It\^{o} stochastic differential equation} (SDE) on $\mathbb{R}^n$ driven by an $m$-dimensional Wiener process has the form
\[
    d x = \tilde X(x)\, dt + \sum_{i=1}^m \sigma_i(x) \, dW_i,
\]
where $\tilde X, \sigma_1, \sigma_2, \ldots :\Re^n \to \Re^n$ are the \textit{drift} and the \textit{diffusion} coefficients.
The solution is interpreted in the It\^{o} sense by
\[
    x(t) = x(0) + \int_0^t \tilde X(x(s))\, ds + \sum_{i=1}^m \int_0^t \sigma_i(x(s))\, dW_i(s).
\]
where the stochastic integral is evaluated as the limit of left-point Riemann sums (Euler method in quadrature rules). 

Alternatively, when it is evaluated via the trapezoidal rule\footnote{Under additional assumptions, the Stratonovich integral may be defined according to the mid-point rule~\cite[pp. 126]{gliklikh2010global}.}, it is referred to as the \textit{Stratonovich} integral, and the corresponding SDE is denoted by
\[
    d x = X(x)\, dt + \sum_{i=1}^m \sigma_i(x) \circ dW_i,
\]
where $X, \sigma_1, \sigma_2\ldots:\Re^n\to\Re^n$. 

The It\^{o} integral and the Stratonovich integral are related by
\begin{align}
    \int_{0}^t \sigma(x)\circ dW = \int_{0}^t \sigma(x) dW + \frac{1}{2} \langle \sigma(x), W\rangle_t,\label{eqn:Ito_Stra_covariation}
\end{align}
where $\langle \cdot, \cdot \rangle_t$ denotes the quadratic covariation, defined by
\begin{align*}
    \langle \sigma(x), W\rangle_t  = \lim_{\|\Pi\|\to 0} \sum_{k=0}^{N-1} &
    \big(\sigma(x(t_{k+1}))-\sigma(x(t_k)) \big)\\
    & \times \big(W(t_{k+1})-W(t_k)\big),
\end{align*}
with $\Pi=\{0=t_0<\cdots<t_N=t\}$~\cite[pp. 126]{gliklikh2010global}.
Or, in \cite[pp. 100]{ikeda1989stochastic}, it has been shown that
\begin{align}
    A\circ dB = A\, dB + \frac{1}{2} \, dA\, dB, \label{eqn:Ito_Stra_d}
\end{align}
for stochastic processes $A$ and $B$.

The It\^{o} integral is defined for \textit{adapted} processes, implying that the integrand can only depend on information available up to the present time.
Specifically, let $z(t) = \int_0^t \sigma(z(\tau))dW$. 
Then $\mathbb{E}[z(t)|\mathcal{F}_s]=z(s)$ for $s<t$,
where $\mathcal{F}_s$ represents the \textit{filtration}, or the information available, at $s$. 
This is also referred to as that $z(t)$ is a \textit{martingale}.
Furthermore, this implies that
\begin{align}
\mathbb{E}\!\left[\int_s^t \sigma(z(\tau)) dW \,\middle|\, \mathcal{F}_s\right] = 0, \label{eqn:Ito_martingale}
\end{align}
or $\mathbb{E}[\sigma dW]=0$.
However, the It\^{o} stochastic integral does not satisfy the classical rules of calculus. This is due to the non-zero quadratic variation of the Wiener process, which gives rise to the following calculus rules:
\begin{align}
    dt^2 = 0, \quad dt\, dW = 0, \quad dW^2 = dt. \label{eqn:Ito_calculus}
\end{align}
These expressions are a shorthand used in derivations and are understood to hold in a mean-square sense (e.g., $\mathbb{E}[(dW)^2] = dt$). They are fundamental to It\^{o}'s Lemma, which implies $df(W) = f'(W) dW + \frac{1}{2}f''(W)dt$ for $f\in C^2(\Re)$.

In contrast, even when the integrand is adapted, the Stratonovich integral generally differs from a martingale by the finite-variation term in the It\^{o}–-Stratonovich conversion as given in \eqref{eqn:Ito_Stra_covariation}.
However, it satisfies the classical chain rule of calculus, e.g., $df(W) = f'(W)\circ dW$.

The \emph{infinitesimal generator} $\mathcal{A}$ of a stochastic process is the linear operator acting on $f \in C^\infty(\mathbb{R}^n)$ defined by
\begin{align}
\mathcal{A} f(x) = \lim_{t \downarrow 0} \frac{\mathbb{E}[f(x_t)|x_0 = x] - f(x)}{t}, \label{eqn:generator}
\end{align}
where the expectation is taken with respect to the process starting at $x_0=x$. 
Essentially, it describes the instantaneous rate of change of the expected value of $f$ given the initial state. 
While a single stochastic process can be described by multiple SDEs, depending on the choice of coordinates or the It\^{o} versus Stratonovich convention, it has a unique generator.

\section{Brownian Motion on a Manifold}\label{sec:BMM}

Brownian motion is defined as the stochastic process whose generator is the Laplace--Beltrami operator. 
The objective of this section is to formulate stochastic differential equations on a manifold that correspond to Brownian motion. 
We first derive an abstract expression for the generator of stochastic differential equations on a manifold, and then use it to formulate Brownian motion on a Riemannian manifold.
This is further extended to a Lie group. 

\subsection{Stochastic Differential Equation on a Manifold}

Consider a stochastic differential equation on a manifold $M$ of the form
\begin{align}
    dx = X(x)\,dt + \sum_{i=1}^m \sigma_i(x) \circ dW_i, \label{eqn:SDE}
\end{align}
where $X, \sigma_1, \ldots, \sigma_m \in \mathfrak{X}(M)$ are smooth vector fields and $W_i$ are independent real-valued Wiener processes. 
This is written in the \emph{Stratonovich} sense, which ensures that the solution flow preserves the manifold structure and that the usual rules of calculus hold. 
Under this formulation, the coordinate change of the stochastic trajectory follows the standard transformation of tangent vectors, as discussed in~\cite[pp.~139]{gliklikh2010global}.

We first identify the equivalent It\^{o} SDE on a manifold as follows. 
\begin{theorem}[It\^{o}--Stratonovich Conversion]\label{thm:Strat_Ito}
    The Stratonovich stochastic differential equation \eqref{eqn:SDE} is equivalent to the following It\^{o} stochastic differential equation:
    \begin{align}
        dx = \tilde X(x) dt + \sum_{i=1}^m \sigma_i (x) dW_i, \label{eqn:SDE_Ito}
    \end{align}
    where the modified drift vector field $\tilde X\in\mathfrak{X}(M)$ is
    \begin{align}
        \tilde X(x) = X(x) + \frac{1}{2} \sum_{i=1}^m \nabla_{\sigma_i(x)}\sigma_i(x). \label{eqn:X_tilde}
    \end{align}
\end{theorem}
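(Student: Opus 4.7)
The plan is to compare both representations through their action on an arbitrary test function $f \in C^\infty(M)$, using the (unique) infinitesimal generator as the invariant between the two forms. The Stratonovich representation admits the classical chain rule, so I would first apply it to $f(x(t))$ to get $df = X[f]\,dt + \sum_i \sigma_i[f] \circ dW_i$. Because each $\sigma_i[f]$ is a real-valued process, the scalar It\^{o}--Stratonovich relation \eqref{eqn:Ito_Stra_d} can be applied componentwise.

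Converting each $\sigma_i[f] \circ dW_i$ to its It\^{o} form gives $\sigma_i[f]\circ dW_i = \sigma_i[f]\,dW_i + \tfrac{1}{2}\,d(\sigma_i[f])\,dW_i$. The covariation $d(\sigma_i[f])\,dW_i$ is then evaluated using the calculus rules \eqref{eqn:Ito_calculus}, keeping only the $dW_i$ contributions in $dx$ (since $dt\,dW_i = 0$) and exploiting independence between distinct Wiener processes; this yields $d(\sigma_i[f])\,dW_i = \sigma_i[\sigma_i[f]]\,dt$. Collecting terms, the generator of the Stratonovich SDE is
\begin{align*}
    \mathcal{A}f = X[f] + \tfrac{1}{2}\textstyle\sum_i \sigma_i[\sigma_i[f]].
\end{align*}

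The key geometric step is to apply the Hessian identity from \eqref{eqn:Hess}, namely $\sigma_i[\sigma_i[f]] = \mathrm{Hess}_f(\sigma_i, \sigma_i) + (\nabla_{\sigma_i}\sigma_i)[f]$. This splits the iterated directional derivative into a symmetric $(0,2)$-tensor part and a first-order covariant drift. Substituting back,
\begin{align*}
    \mathcal{A}f = \bigl(X + \tfrac{1}{2}\textstyle\sum_i \nabla_{\sigma_i}\sigma_i\bigr)[f] + \tfrac{1}{2}\textstyle\sum_i \mathrm{Hess}_f(\sigma_i,\sigma_i),
\end{align*}
from which the It\^{o} drift is read off as $\tilde X = X + \tfrac{1}{2}\sum_i \nabla_{\sigma_i}\sigma_i$, with the remaining Hessian term being the coordinate-invariant manifold analog of the second-derivative contribution that appears in Euclidean It\^{o}'s formula. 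Since the identification must hold for every $f$, the expression for $\tilde X$ is forced uniquely as a vector field on $M$.

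The main obstacle I anticipate is subtle but essential: an It\^{o} SDE on a manifold is only coordinate-invariant once its second-order term is interpreted through a connection, since the naive local-coordinate drift $\sigma_i^j\,\partial_j \sigma_i^k$ does not transform as a vector. The Hessian identity \eqref{eqn:Hess} is precisely what absorbs the non-tensorial Christoffel contribution into the intrinsic covariant derivative $\nabla_{\sigma_i}\sigma_i$, which is why the Levi--Civita connection must appear in $\tilde X$ even though the Stratonovich form \eqref{eqn:SDE} made no explicit reference to it.
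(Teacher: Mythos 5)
Your route is genuinely different from the paper's: you match infinitesimal generators through a test function $f$, whereas the paper converts the SDE itself pathwise, writing $\sigma_i\circ dW_i = \sigma_i\,dW_i + \tfrac12\,d\sigma_i\,dW_i$ and evaluating the covariation of the \emph{vector field} via $d\sigma_i = \nabla_{dx}\sigma_i = \nabla_X\sigma_i\,dt + \sum_j \nabla_{\sigma_j}\sigma_i\circ dW_j$, which yields $\sigma_i\circ dW_i = \sigma_i\,dW_i + \tfrac12\nabla_{\sigma_i}\sigma_i\,dt$ directly, with no test function involved. Your computation of the Stratonovich generator and the algebraic split $\sigma_i[\sigma_i[f]] = \mathrm{Hess}_f(\sigma_i,\sigma_i) + (\nabla_{\sigma_i}\sigma_i)[f]$ are both correct.

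The gap is in the final step, where you ``read off'' $\tilde X$. That identification is only valid if you have already established that the It\^{o} SDE \eqref{eqn:SDE_Ito} on the manifold has generator $\tilde X[f] + \tfrac12\sum_i \mathrm{Hess}_f(\sigma_i,\sigma_i)$, i.e., that its second-order part is the \emph{covariant} Hessian contracted against $\sigma_i\otimes\sigma_i$. You assert this as ``the coordinate-invariant manifold analog'' of the Euclidean second-derivative term, but it is exactly the nontrivial convention at stake: with the naive coordinate It\^{o} formula the second-order part is $\tfrac12\sigma_i^j\sigma_i^k\partial_j\partial_k f$, and the resulting conversion drift would be the non-tensorial $\tfrac12\sigma_i^j\partial_j\sigma_i^k$, differing from $\tfrac12\nabla_{\sigma_i}\sigma_i$ by the Christoffel contribution $\tfrac12\Gamma^k_{jl}\sigma_i^j\sigma_i^l$. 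Your own closing paragraph identifies this issue but does not resolve it. The decomposition of $\mathcal{A}f$ into a first-order operator plus a Hessian term is just algebra; nothing in your argument forces the first-order part to be \emph{the It\^{o} drift} until you prove the covariant It\^{o} Lemma for \eqref{eqn:SDE_Ito} --- which the paper does separately in \Cref{thm:Ito} via the covariant Taylor expansion $df = \nabla_{dx}f + \tfrac12\mathrm{Hess}_f(dx,dx)$, in a way that does not rely on \Cref{thm:Strat_Ito}. Supplying that step (non-circularly) would close your argument; matching generators then pins down $\tilde X$ uniquely as a vector field, and since the diffusion coefficients agree the two SDEs coincide as equations, not merely in law.
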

\begin{proof}
    From \eqref{eqn:Ito_Stra_d},
    \begin{align}
        \sigma_i(x) \circ dW_i = \sigma_i(x) dW_i + \frac{1}{2} d\sigma_i(x)\, d W_i, \label{eqn:thm_Start_Ito_0}
    \end{align}
    where $d\sigma_i(x)$ corresponds to the stochastic differential of the vector field $\sigma_i$ when $x$ is a path of \eqref{eqn:SDE}.
Since the Stratonovich formulation satisfies the chain rule for a vector field,
\begin{align}
    d\sigma_i(x) = \nabla_{dx} \sigma_i = \nabla_{X} \sigma_i dt + \sum_{j=1}^m\nabla_{\sigma_j} \sigma_i \circ dW_j,\label{eqn:thm_Start_Ito_1}
\end{align}
where in the first equality, $dx$ in $\nabla_{dx} \sigma_i$ is interpreted as a linear combination of vector fields $X, \sigma_1, \sigma_2, \ldots $ weighted by $dt$ and $dW_i$ as in \eqref{eqn:SDE}, 
and the second equality follows from the linearity of the covariant derivative with respect to the bottom slot given by \eqref{eqn:cov_deriv_lower_linear}.

From \eqref{eqn:Ito_Stra_d}, we have
\begin{align}
(\nabla_{\sigma_j}\sigma_i \circ dW_j) dW_i & = \nabla_{\sigma_j} \sigma_i dW_j dW_i + \frac{1}{2} d(\nabla_{\sigma_j}\sigma_i) dW_j dW_i \nonumber \\
                                            & = \delta_{ij} \nabla_{\sigma_j} \sigma_i dt,\label{eqn:thm_Start_Ito_2}
\end{align}
where we have used the fact $dW_i dW_j = \delta_{ij} dt$ and that the higher-order terms of $dt,dW$ vanish.

Substituting \eqref{eqn:thm_Start_Ito_1} into \eqref{eqn:thm_Start_Ito_0}, and applying \eqref{eqn:thm_Start_Ito_2},
    \begin{align*}
        \sigma_i(x) \circ dW_i = \sigma_i(x) dW_i + \frac{1}{2} \nabla_{\sigma_i}\sigma_i dt.
    \end{align*}
    Combined with \eqref{eqn:SDE}, this yields \eqref{eqn:SDE_Ito} and \eqref{eqn:X_tilde}.
\end{proof}

Suppose $M=\Re^1$ and $m=1$.
Then, the correction term in \eqref{eqn:X_tilde} is given by
\[
\frac12 \nabla_{\sigma}\sigma
= \frac12 \parenth{\sigma \deriv{\sigma}{x} + \Gamma^{1}_{11}\sigma^2} = \frac12 \sigma \deriv{\sigma}{x},   
\]
where we used the fact that the Christoffel symbols vanish in $M=\Re$.
The above recovers the common It\^{o}--Stratonovich conversion term in the Euclidean space.
In other words, the conversion term for a stochastic process on a manifold includes an additional term contributed by the Christoffel symbol in the covariant derivative of the manifold.

Next, the generator of this process is obtained as follows. 
\begin{theorem}[Generator of SDE]\label{thm:generator}
    The infinitesimal generator $\mathcal{A}:C^\infty(M)\rightarrow C^\infty(M)$ of the Stratonovich stochastic differential equation \eqref{eqn:SDE} on $f\in C^\infty(M)$ is given by
    \begin{align}
        \mathcal{A} f & = X[f] + \frac{1}{2}\sum_{i=1}^m \sigma_i [\sigma_i[f]] .\label{eqn:A_Strat}
    \end{align}
    Or equivalently, the generator of the corresponding It\^{o} stochastic differential equation \eqref{eqn:SDE_Ito} is
    \begin{align}
        \mathcal{A} f & = \tilde X[f] + \frac{1}{2}\sum_{i=1}^m \{\sigma_i [\sigma_i[f]] - (\nabla_{\sigma_i} \sigma_i) [f]\} \label{eqn:A_Ito}\\
                      & = \tilde X[f] + \frac{1}{2}\sum_{i=1}^m \mathrm{Hess}_f (\sigma_i, \sigma_i). \label{eqn:A_Ito_Hess}
    \end{align}
\end{theorem}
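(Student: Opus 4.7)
The natural route is to derive the Stratonovich expression (\ref{eqn:A_Strat}) first, and then obtain both Itô forms by purely algebraic manipulation. Since the Stratonovich integral obeys the classical chain rule for smooth scalar functions, I would begin by applying it to $f\in C^\infty(M)$ along a solution $x(t)$ of (\ref{eqn:SDE}):
\begin{align*}
    df(x) = df(X)\,dt + \sum_{i=1}^m df(\sigma_i)\circ dW_i = X[f]\,dt + \sum_{i=1}^m \sigma_i[f]\circ dW_i,
\end{align*}
using $Y[f]=df(Y)$ for any vector field $Y$. The plan is then to convert each Stratonovich term into an Itô term so that the martingale property (\ref{eqn:Ito_martingale}) can eliminate the diffusion contribution under expectation.

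The central computation is the Itô correction. Applying (\ref{eqn:Ito_Stra_d}) to the scalar process $\sigma_i[f]$ gives $\sigma_i[f]\circ dW_i = \sigma_i[f]\,dW_i + \tfrac{1}{2}\,d(\sigma_i[f])\,dW_i$. I would next expand $d(\sigma_i[f])$ by invoking the Stratonovich chain rule once more, yielding $d(\sigma_i[f]) = X[\sigma_i[f]]\,dt + \sum_j \sigma_j[\sigma_i[f]]\circ dW_j$, and then apply the Itô calculus rules (\ref{eqn:Ito_calculus}) with $dW_i\,dW_j = \delta_{ij}\,dt$ to collapse $d(\sigma_i[f])\,dW_i$ to $\sigma_i[\sigma_i[f]]\,dt$. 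This mirrors the bookkeeping already performed in the proof of \Cref{thm:Strat_Ito}, in particular (\ref{eqn:thm_Start_Ito_2}). Summing over $i$, taking the conditional expectation given $x(0)=x$, and invoking the limit definition (\ref{eqn:generator}) then produces (\ref{eqn:A_Strat}).

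For the Itô form (\ref{eqn:A_Ito}), I would simply substitute $X = \tilde X - \tfrac{1}{2}\sum_i \nabla_{\sigma_i}\sigma_i$ from (\ref{eqn:X_tilde}) into (\ref{eqn:A_Strat}), which reassembles the terms into the stated expression. The equivalent Hessian representation (\ref{eqn:A_Ito_Hess}) is then immediate from the intrinsic definition (\ref{eqn:Hess}), recognizing $\mathrm{Hess}_f(\sigma_i,\sigma_i) = \sigma_i[\sigma_i[f]] - (\nabla_{\sigma_i}\sigma_i)[f]$.

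The main obstacle, although ultimately minor, is justifying the formal manipulation of stochastic differentials on a manifold, since the Itô calculus rules (\ref{eqn:Ito_calculus}) are stated in Euclidean coordinates. However, both $X[f]$ and $\sigma_i[\sigma_i[f]]$ are intrinsically defined scalar functions on $M$, so the chart-based heuristic must produce a coordinate-invariant answer; working in any local chart and checking transformation behavior on a second chart overlap (or, more cleanly, observing that Stratonovich calculus on manifolds is well-defined by construction, as noted after (\ref{eqn:SDE})) makes the derivation rigorous without disturbing the structure outlined above.
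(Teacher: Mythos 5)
Your proposal is correct and follows essentially the same route as the paper: derive the Stratonovich generator by applying the chain rule to $f$, convert the martingale-defying term via \eqref{eqn:Ito_Stra_d} and the quadratic-variation rules, then obtain \eqref{eqn:A_Ito} by substituting \eqref{eqn:X_tilde} and \eqref{eqn:A_Ito_Hess} from the definition \eqref{eqn:Hess}. The only cosmetic difference is that you expand $d(\sigma_i[f])$ by applying the chain rule directly to the scalar $\sigma_i[f]$, whereas the paper first justifies $d(\sigma_i[f])=\sigma_i[df]$ via Cartan's magic formula; both yield the same $\sigma_i[\sigma_i[f]]\,dt$ term.
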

\begin{proof}
    Let $f\in C^\infty(M)$.
    Since the Stratonovich stochastic differential equation satisfies the chain rule in its standard first-order form, from \eqref{eqn:SDE},
    \begin{align}
        df(x) & = X[f] dt + \sum_{i=1}^m \sigma_i[f] \circ dW_i. \label{eqn:df_generator}
    \end{align}
    In other words, there is no need to expand $df$ to second order as in the It\^{o} calculus.
    However, since the Stratonovich formulation does not have the martingale property for $f(x)$ without conversion to the It\^{o} form, i.e., \eqref{eqn:Ito_martingale} is not satisfied, we cannot simply take the mean to eliminate the second term. 

    Instead, we convert \eqref{eqn:df_generator} into an It\^{o} form using \eqref{eqn:Ito_Stra_d} to obtain
    \begin{align}
        df(x) & = X[f] dt + \sum_{i=1}^m \sigma_i[f] dW_i + \frac{1}{2}\sum_{i=1}^m d(\sigma_i[f]) dW_i. \label{eqn:df_A0}
    \end{align}

    Now, we simplify the last term. 
    From Cartan's magic formula~\cite{marsden2013introduction}, we have $X[\alpha] = \mathcal{L}_X \alpha = i_X d\alpha + d(i_X\alpha)$ for any $X\in\mathfrak{X}(M)$ and a differentiable form $\alpha$, where $\mathcal{L}$ and $i$ denote the Lie derivative and the interior product, respectively. 
    Setting $\alpha =df$, we obtain $X[df] = i_X d^2f + d (i_X df)$. 
    Since $d^2 f= 0$ and $i_X df = df(X) = X[f]$, this reduces to $X[df] = d(X[f])$, i.e., the Lie-derivative and the differential commute.
    Thus, the following factor in the last term of \eqref{eqn:df_A0} is
    \begin{align*}
        d(\sigma_i[f]) & = \sigma_i [df] \\
                       & = \sigma_i[X[f]]dt + \sum_{j=1}^m \sigma_i [\sigma_j[f]]\circ dW_j.
    \end{align*}
    Substituting this into \eqref{eqn:df_A0}, and following the same development as presented in \eqref{eqn:thm_Start_Ito_2}, 
    \begin{align*}
        df(x) & = X[f] dt + \sum_{i=1}^m \sigma_i[f] dW_i + \frac{1}{2}\sum_{i=1}^m \sigma_i[\sigma_i[f]] dt.
    \end{align*}
    Since $\E[dW_i]=0$, from \eqref{eqn:generator}, this yields \eqref{eqn:A_Strat}.
    Finally, \eqref{eqn:A_Ito} is obtained by the It\^{o}--Stratonovich conversion \eqref{eqn:X_tilde}, and \eqref{eqn:A_Ito_Hess} is obtained by the definition of the Hessian at \eqref{eqn:Hess}.
\end{proof}

Consider a special case of \Cref{thm:generator} when $M=\Re^1$.
We have
\begin{align*}
    \sigma[\sigma[f]] & = \sigma\deriv{}{x}\parenth{\sigma\deriv{f}{x}} = \sigma\deriv{\sigma}{x}\deriv{f}{x} + \sigma^2 \frac{\partial^2 f}{\partial x^2},\\
    (\nabla_\sigma \sigma) [f] & = \sigma\deriv{\sigma}{x}\deriv{f}{x}.
\end{align*}
Thus, \eqref{eqn:A_Ito} reduces to
\begin{align*}
    \mathcal{A} f = \tilde X \deriv{f}{x} + \frac12 \sigma^2 \frac{\partial^2 f}{\partial x^2},
\end{align*}
which is the well-known expression for the generator of the It\^{o} process on $\Re^1$. 

In \eqref{eqn:df_generator} of the proof, we expanded $df$ up to the first order to utilize the Stratonovich SDE. 
Alternatively, we can expand it up the second order and apply the It\^{o} SDE. 
This yields the It\^{o} lemma on a manifold as summarized below.

\begin{theorem}[It\^{o} Lemma]\label{thm:Ito}
    Suppose $x$ is driven by \eqref{eqn:SDE_Ito}, and let $f\in C^\infty(M)$.
    Then, we have
    \begin{align}
        df(x) = \parenth{\tilde X[f] + \frac{1}{2}\sum_{i=1}^m \mathrm{Hess}_f (\sigma_i,\sigma_i)}\! dt + \sum_{i=1}^m \sigma_i[f] dW_i. \label{eqn:Ito_lem}
    \end{align}
\end{theorem}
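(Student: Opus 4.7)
The plan is to observe that the intermediate identity already derived inside the proof of Theorem \ref{thm:generator} is, in fact, a pathwise (stochastic differential) statement rather than merely a statement about expectations; so the bulk of the work is already done, and what remains is to express the drift in the It\^{o} drift $\tilde X$ and to recognise the Hessian.

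First, I would recall from the proof of Theorem \ref{thm:generator} that by applying the Stratonovich chain rule to $f(x)$ with $x$ solving \eqref{eqn:SDE}, and then converting the resulting Stratonovich stochastic differential of $f$ to an It\^{o} form via \eqref{eqn:Ito_Stra_d}, one obtains the pathwise identity
\begin{align*}
    df(x) = X[f]\,dt + \sum_{i=1}^m \sigma_i[f]\,dW_i + \frac{1}{2}\sum_{i=1}^m \sigma_i[\sigma_i[f]]\,dt,
\end{align*}
where $X$ is the Stratonovich drift of the SDE. No expectation is taken in this derivation; the Lie-derivative/differential commutation via Cartan's magic formula and the mutual-variation computation $dW_i\,dW_j=\delta_{ij}dt$ are purely algebraic identities at the level of stochastic differentials.

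Next, I would use the It\^{o}--Stratonovich conversion \eqref{eqn:X_tilde} in the form $X[f] = \tilde X[f] - \tfrac{1}{2}\sum_{i=1}^m (\nabla_{\sigma_i}\sigma_i)[f]$, substitute into the displayed identity, and group the $dt$-terms to get
\begin{align*}
    df(x) = \tilde X[f]\,dt + \sum_{i=1}^m \sigma_i[f]\,dW_i + \frac{1}{2}\sum_{i=1}^m\!\big\{\sigma_i[\sigma_i[f]] - (\nabla_{\sigma_i}\sigma_i)[f]\big\}dt.
\end{align*}
Finally, invoking the coordinate-free expression for the Hessian in \eqref{eqn:Hess}, namely $\mathrm{Hess}_f(\sigma_i,\sigma_i) = \sigma_i[\sigma_i[f]] - (\nabla_{\sigma_i}\sigma_i)[f]$, yields the claimed It\^{o} formula \eqref{eqn:Ito_lem}.

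The main obstacle is essentially conceptual rather than computational: one must justify treating the derivation inside the proof of Theorem \ref{thm:generator} as an identity of stochastic differentials (so that it encodes $f(x(t))$ pathwise), rather than as a step used only to extract the generator. I would address this by emphasising that each manipulation (Stratonovich chain rule for $f$, the It\^{o}--Stratonovich conversion \eqref{eqn:Ito_Stra_d}, the rules \eqref{eqn:Ito_calculus}, and the linearity of the covariant derivative) is a pathwise relation that holds before any expectation is taken, so the resulting expression is genuinely the differential $df(x)$ of the process, which is precisely what the It\^{o} lemma asserts.
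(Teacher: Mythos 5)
Your proof is correct, but it takes a genuinely different route from the paper's. The paper proves \Cref{thm:Ito} independently of \Cref{thm:generator}: it starts from the second-order covariant Taylor expansion $df = \nabla_{dx} f + \tfrac{1}{2}\mathrm{Hess}_f(dx,dx)$, substitutes the It\^{o} SDE \eqref{eqn:SDE_Ito} directly, and kills the cross terms with \eqref{eqn:Ito_calculus}, so the Hessian appears at the outset as the intrinsic second-order term of a coordinate-invariant expansion. You instead reuse the pathwise identity $df = X[f]\,dt + \sum_i \sigma_i[f]\,dW_i + \tfrac{1}{2}\sum_i \sigma_i[\sigma_i[f]]\,dt$ from inside the proof of \Cref{thm:generator}, substitute $X[f] = \tilde X[f] - \tfrac{1}{2}\sum_i(\nabla_{\sigma_i}\sigma_i)[f]$ via \eqref{eqn:X_tilde}, and recognise the Hessian via \eqref{eqn:Hess} only at the final regrouping. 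Your key justification --- that the expectation in the generator proof is taken only at the last step, so everything before it is an identity of stochastic differentials --- is exactly right and is the one point that needed to be argued; you address it explicitly. What each route buys: yours avoids the covariant Taylor expansion machinery entirely and leans only on results already established, at the cost of detouring through the Stratonovich form and making the Hessian look like an algebraic accident; the paper's route is self-contained on the It\^{o} side and exhibits the ``It\^{o} correction equals Hessian'' structure geometrically, which is why the paper can then remark that taking expectations of \eqref{eqn:Ito_lem} gives an \emph{alternative} proof of \Cref{thm:generator}. Note that under your ordering that remark would become circular, so if your proof were adopted the paper would have to drop it; as a standalone proof of \Cref{thm:Ito}, however, there is no circularity and no gap.
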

\begin{proof}
    Let $f\in C^\infty(M)$ be expanded up to the second order by
    \begin{align}
        df = \nabla_{dx} f + \frac{1}{2}\mathrm{Hess}_f (dx, dx), \label{eqn:df_Ito_lem_0}
    \end{align}
    where we adopted the covariant Taylor expansion~\cite{avramidi2007analytic} to make \eqref{eqn:df_Ito_lem_0} coordinate invariant. 
    Since this is of the second order, we can substitute the It\^{o} SDE \eqref{eqn:SDE_Ito} into \eqref{eqn:df_Ito_lem_0}.
    The first term of \eqref{eqn:df_Ito_lem_0} is
    \begin{align*}
        \nabla_{dx} f = X[f] dt + \sum_{i=1}^m \sigma_i[f] dW_i,
    \end{align*}
    Since the Hessian is bilinear and $dt^2 = dtdW=0$, the second term of \eqref{eqn:df_Ito_lem_0}, after scaled by the factor 2, is
    \begin{align*}
        \mathrm{Hess}_f (dx, dx) & = \mathrm{Hess}_f \parenth{\sum_{i=1}^m \sigma_i dW_i, \sum_{j=1}^m \sigma_j dW_j} \\
                                 & = \sum_i \mathrm{Hess}_f (\sigma_i, \sigma_i) dt.
    \end{align*}
    Substituting these into \eqref{eqn:df_Ito_lem_0}, we obtain \eqref{eqn:Ito_lem}.
\end{proof}

Again we verify the results of \Cref{thm:Ito} on the Euclidean space. 
When $M=\Re^1$, \eqref{eqn:Ito_lem} reduces to
\begin{align*}
    df(x) = \parenth{\tilde X \deriv{f}{x} + \frac{1}{2}\sigma^2\frac{\partial^2 f}{\partial x^2}}dt + \sigma\deriv{f}{x}dW,
\end{align*}
which recovers the It\^{o} lemma on $\Re$. 

Also, by taking the mean of \eqref{eqn:Ito_lem}, we can easily derive \eqref{eqn:A_Ito_Hess}.
As such, \eqref{eqn:Ito_lem} can serve as an alternative proof for \Cref{thm:generator}.

\subsection{Brownian Motion}\label{sec:BM_Ei}

Brownian motion on $M$ is defined as a stochastic process whose generator is one-half of the Laplace--Beltrami operator~\cite{hsu2002stochastic}.
Intuitively, the process governed by the Laplace--Beltrami operator describes diffusion, representing the net movement of particles from regions of higher concentration to regions of lower concentration.
More precisely, if $\Delta f$ is positive at a point, then the point is a local minimum of $f$, and the value of $f$ tends to increase due to the action of the generator; conversely, a negative value of $\Delta f$ corresponds to a local maximum, where $f$ tends to decrease.
Thus, under this dynamics, the distribution of $f$ tends to become increasingly \emph{flat} over time.
It is intrinsically formulated on a manifold, independent of the choice of coordinates. 
Further, it results in an isometric diffusion, without any \textit{preferred} direction, since the Laplace--Beltrami operator commutes with isometries such as rotations or translations.


Our construction of the SDE relies on the orthonormal frame introduced in \Cref{sec:diff_geo}.
We first show the following identities for the gradient, divergence, and Laplace--Beltrami operator expressed in terms of the orthonormal frame. 

\begin{prop}\label{prop:grad_div_laplace_Ei}
    Let $\{E_1,\ldots, E_n\}$ be an orthonormal frame, i.e., a set of vector fields $E_i\in\mathfrak{X}(M)$ that forms an orthonormal basis for the tangent space, satisfying $\g(E_i,E_j)=\delta_{ij}$. 
    The gradient $\grad:C^\infty(M)\rightarrow\mathfrak{X}(M)$, the divergence $\div: \mathfrak{X}(M)\rightarrow C^\infty (M)$, and the Laplace--Beltrami operator  $\Delta:C^\infty(M)\rightarrow C^\infty(M)$ are expressed in terms of the orthonormal frame as
    \begin{align}
        \grad f & = \sum_{i=1}^n (E_i[f]) E_i, \label{eqn:grad_Ei}\\
        \div X & = \sum_{i=1}^n \g(\nabla_{E_i} X, E_i), \label{eqn:div_Ei}\\
        \Delta f & = \sum_{i=1}^n \left( E_i[ E_i [f] ] - (\nabla_{E_i} E_i) [f] \right), \label{eqn:laplace_Ei}\\
                 & = \sum_{i=1}^n \mathrm{Hess}_f (E_i, E_i),  \label{eqn:laplace_Ei_Hess}
    \end{align}
    for $f\in\mathcal{C}^\infty (M)$ and $X\in\mathfrak{X}(M)$.
    Further, these are independent of the choice of the orthonormal frame.
\end{prop}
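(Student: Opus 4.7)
The plan is to derive each formula directly from the coordinate-free definitions given in \Cref{sec:diff_geo}, exploiting orthonormality to turn dual-pairing and trace computations into simple sums.

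First, for the gradient, I would expand $\grad f = \sum_j a^j E_j$ in the given frame and solve for the coefficients. Taking $\g$-pairings with $E_i$ and using orthonormality gives $a^i = \g(\grad f, E_i)$, which by the defining relation \eqref{eqn:grad} equals $df(E_i) = E_i[f]$. This yields \eqref{eqn:grad_Ei}. The divergence identity \eqref{eqn:div_Ei} then follows from the definition \eqref{eqn:div} together with the standard linear-algebra fact that the trace of an endomorphism $T \colon T_pM \to T_pM$ with respect to an orthonormal basis is $\sum_i \g(T E_i, E_i)$; applied to $T(Y) = \nabla_Y X$, this gives the claim.

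Next, to obtain the Laplace--Beltrami expression \eqref{eqn:laplace_Ei}, I would compose the two previous formulas via $\Delta f = \div(\grad f)$ to get $\Delta f = \sum_i \g(\nabla_{E_i}(\grad f), E_i)$. The key step is to eliminate the explicit appearance of $\nabla_{E_i}\grad f$. Metric compatibility \eqref{eqn:cov_deriv_comp} applied to the product $\g(\grad f, E_i)$ gives
\begin{align*}
  E_i[\g(\grad f, E_i)] = \g(\nabla_{E_i}\grad f, E_i) + \g(\grad f, \nabla_{E_i} E_i),
\end{align*}
and by the defining property of the gradient, $\g(\grad f, E_i) = E_i[f]$ and $\g(\grad f, \nabla_{E_i} E_i) = (\nabla_{E_i} E_i)[f]$. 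Rearranging and summing over $i$ yields \eqref{eqn:laplace_Ei}. Identity \eqref{eqn:laplace_Ei_Hess} is then immediate by comparison with the intrinsic Hessian formula \eqref{eqn:Hess}.

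Finally, for frame-independence, my strategy is to observe that $\grad f$, $\div X$, and $\Delta f$ have all been defined without reference to any frame in \Cref{sec:diff_geo}, so the right-hand sides of \eqref{eqn:grad_Ei}--\eqref{eqn:laplace_Ei_Hess} must agree regardless of the chosen orthonormal frame; one can also verify this directly by noting that passage between two orthonormal frames is governed by a pointwise orthogonal transformation $E_i' = O^j_i E_j$ with $O^j_i O^k_i = \delta^{jk}$, under which each expression reduces to the contraction of a tensor with the identity. I do not anticipate a serious obstacle: the proof is essentially a bookkeeping exercise combining orthonormality, the definitions of $\grad$, $\div$, and $\mathrm{Hess}$, and metric compatibility of the Levi--Civita connection. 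The only mildly delicate point is keeping track of which slot of the covariant derivative is being differentiated when expanding $\nabla_{E_i}\grad f$, which is handled cleanly by the metric-compatibility identity above.
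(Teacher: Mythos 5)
Your proposal is correct, and the gradient, divergence, and Hessian steps coincide with the paper's proof. The one place where you take a genuinely different (and in fact tidier) route is the derivation of \eqref{eqn:laplace_Ei}. The paper first expands $\grad f=\sum_j (E_j[f])E_j$ inside the divergence, applies the Leibniz rule \eqref{eqn:cov_deriv_Leibniz} to $\nabla_{E_i}\big((E_j[f])E_j\big)$, and is then left with the double sum $\sum_{i,j}(E_j[f])\,\g(\nabla_{E_i}E_j,E_i)$, which it handles by applying metric compatibility to the constant $\g(E_j,E_i)=\delta_{ij}$ and regrouping the $j$-sum back into $\grad f$. You instead apply metric compatibility \eqref{eqn:cov_deriv_comp} directly to $\g(\grad f,E_i)=E_i[f]$, which gives $\g(\nabla_{E_i}\grad f,E_i)=E_i[E_i[f]]-(\nabla_{E_i}E_i)[f]$ in one line and avoids the expansion and the double sum entirely. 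Both arguments use exactly the same two ingredients (the defining property of the gradient and metric compatibility), but yours reaches \eqref{eqn:laplace_Ei} with less index bookkeeping. For frame-independence, the paper carries out the explicit computation with an orthogonal change-of-frame matrix for both \eqref{eqn:grad_Ei} and \eqref{eqn:div_Ei}; your abstract argument---that each right-hand side has been shown equal to a quantity defined without reference to a frame, hence is the same for every orthonormal frame---is logically sufficient and subsumes that computation, with the explicit orthogonal-transformation check serving only as confirmation.
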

\begin{proof}
    Let $\grad f$ be resolved in $\{E_i\}$, i.e., $\grad f = \sum_{i} c^i(x) E_i$ for $c^i\in C^\infty(M)$.
    Substituting this into \eqref{eqn:grad}, and setting $X = E_j$,
    \begin{align*}
        \g(\grad f, E_j) = \sum_i  c^i \g(E_i, E_j) = c^j = E_j[ f ],
    \end{align*}
    where we have used the linearity of the metric and $\g(E_i,E_j)=\delta_{ij}$. 
    Substituting this coefficient back to $\grad f = \sum_{i} c^i(x) E_i$, we obtain \eqref{eqn:grad_Ei}.

    Next, we show \eqref{eqn:grad_Ei} is independent of the choice of the frame. 
    Let $\{F_i\}$ be another orthonormal frame. 
    Then, there exists an orthonormal matrix $R:M\rightarrow \mathrm{O}(n)$ such that
    \begin{align}
        F_i = \sum_j R_{ij} E_j,\label{eqn:F_i}
    \end{align}
    at each $x\in M$, where $R_{ij}\in\Re$ denotes the elements of $R$. 
    We evaluate the right-hand side of \eqref{eqn:grad_Ei} using $\{F_i\}$ to obtain
    \begin{align*}
        \sum_i (F_i[f])F_i & = \sum_{i} \bigg\{\bigg(\sum_{j} R_{ij}E_{j}[f]\bigg) \bigg(\sum_k R_{ik} E_k\bigg)\bigg\} \\
                           & =  \sum_{j,k} \parenth{\sum_i R_{ij} R_{ik}} (E_j [f]) E_k.
    \end{align*}
    But using the orthogonality of $R$, namely $R^TR = I_{n\times n}$, we have $\sum_i R_{ij}R_{ik} = \delta_{jk}$.
    Substituting this, the above expression reduces to \eqref{eqn:grad_Ei}.
    This verifies that \eqref{eqn:grad_Ei} is independent of the choice of the frame. 

    Next, using the definition of the trace of a linear operator, we obtain \eqref{eqn:div_Ei} directly from \eqref{eqn:div}. 
    Similarly, we evaluate the right-hand side of \eqref{eqn:div_Ei} with $\{F_i\}$ as follows:
    \begin{align*}
        \sum_i \g(\nabla_{F_i} X, F_i) = \sum_i \g( \nabla_{\sum_j R_{ij} E_j} X, \sum_k R_{ik}E_k).
    \end{align*}
    Using the linearity of the covariant derivative in the lower slot \eqref{eqn:cov_deriv_lower_linear}, and the linearity of the inner product, we can pull out the coefficients of $R$ to obtain
    \begin{align*}
        \sum_{i,j,k} R_{ij} R_{ik} \g( \nabla_{E_j} X, E_k)=
        \sum_{j,k} \bigg(\sum_i R_{ij} R_{ik}\bigg) \g( \nabla_{E_j} X, E_k),
    \end{align*}
    which reduces to \eqref{eqn:div_Ei} using the orthogonality of $R$.
    Therefore, \eqref{eqn:div_Ei} is also independent of the choice of the frame. 

    Finally, the Laplace--Beltrami operator is formulated by $\Delta f = \div (\grad f)$. 
    Substituting \eqref{eqn:grad_Ei} into \eqref{eqn:div_Ei}, and using \eqref{eqn:cov_deriv_lower_linear},
    \begin{align*}
        \Delta f = \sum_{i,j} \g (\nabla_{E_i} ((E_j[f]) E_j), E_i).  
    \end{align*}
    According to the product rule of the covariant derivative \eqref{eqn:cov_deriv_Leibniz}, 
    \begin{align*}
        \nabla_{E_i} (E_j[f]) E_j = (E_i[E_j[f]]) E_j + (E_j[f]) \nabla_{E_i} E_j.
    \end{align*}
    Substituting this into the above, and using $\g(E_i, E_j) = \delta_{ij}$,
    \begin{align}
        \Delta f = \sum_i E_i[E_i[f]] + \sum_{i,j} (E_j[f]) \g(\nabla_{E_i} E_j, E_i). \label{eqn:delta_f_0}
    \end{align}
    In the metric compatibility of the covariant derivative given by \eqref{eqn:cov_deriv_comp}, we set
    $X=Z=E_i$ and $Y=E_j$ to obtain
    \begin{align*}
        0 = \g(\nabla_{E_i} E_j, E_i) + \g(E_j, \nabla_{E_i} E_i). 
    \end{align*}
    Using this, the second term of \eqref{eqn:delta_f_0} is rearranged into
    \begin{align*}
        -\sum_{i,j} E_j[f] \g(E_j, \nabla_{E_i}{E_i})  & = -\sum_i \g (\textstyle \sum_j (E_j[f]) E_j, \nabla_{E_i}E_i )\\
                                                      & = -\sum_i \g (\grad f, \nabla_{E_i}E_i )\\
                                                      & = - \sum_i (\nabla_{E_i}E_i)[f],
    \end{align*}
    where the second equality is from \eqref{eqn:grad_Ei}, and the third is from \eqref{eqn:grad}.
    Substituting this back to \eqref{eqn:delta_f_0} yields \eqref{eqn:laplace_Ei}.
    Since \eqref{eqn:laplace_Ei} is obtained by the composition of \eqref{eqn:grad_Ei} and \eqref{eqn:div_Ei} that are independent of the choice of the frame, \eqref{eqn:laplace_Ei} is independent of the choice of the frame.
    Finally, \eqref{eqn:laplace_Ei_Hess} is obtained by \eqref{eqn:Hess}.
\end{proof}

The resemblance between the Laplace--Beltrami operator given by \eqref{eqn:laplace_Ei} and the expressions of the generator in \Cref{thm:generator} motivates the following formulation of an intrinsic stochastic differential equation for the Brownian motion. 
The key idea is to inject noise along each axis $E_i$ of an orthonormal frame using a Wiener process, and choose the remaining drift term such that the corresponding generator becomes the Laplace--Beltrami operator.

\begin{theorem}[Brownian Motion on a Manifold]\label{thm:Brownian_motion}
    Let $\{E_1,\dots,E_n\}$ be an orthonormal frame on $M$.  
    Consider the following Stratonovich stochastic differential equation
    \begin{align}
        dx &= -\frac12 \sum_{i=1}^n \nabla_{E_i} E_i \, dt + \sum_{i=1}^n E_i \circ dW_i,
        \label{eqn:SDE_Brownian}
    \end{align}
    and its equivalent It\^{o} form
    \begin{align}
        dx &= \sum_{i=1}^n E_i  dW_i.
        \label{eqn:SDE_Brownian_Ito}
    \end{align}
    Then the infinitesimal generator of either \eqref{eqn:SDE_Brownian} or \eqref{eqn:SDE_Brownian_Ito} is 
    \[
        \mathcal{A} f = \frac12 \Delta f,
    \]
    such that the resulting process is the Brownian motion on $M$.
\end{theorem}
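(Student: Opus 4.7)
The plan is to reduce this to a direct application of the two preceding theorems together with \Cref{prop:grad_div_laplace_Ei}, since the drift in \eqref{eqn:SDE_Brownian} has been engineered precisely to make the It\^{o}--Stratonovich correction cancel and the generator coincide with $\tfrac12\Delta$.

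First, I would establish the equivalence between \eqref{eqn:SDE_Brownian} and \eqref{eqn:SDE_Brownian_Ito} by invoking \Cref{thm:Strat_Ito}. Identifying the Stratonovich drift $X = -\tfrac{1}{2}\sum_{i=1}^n \nabla_{E_i} E_i$ and diffusion vector fields $\sigma_i = E_i$, the It\^{o} drift computed via \eqref{eqn:X_tilde} is
\begin{align*}
    \tilde X = -\frac{1}{2}\sum_{i=1}^n \nabla_{E_i} E_i + \frac{1}{2}\sum_{i=1}^n \nabla_{E_i} E_i = 0,
\end{align*}
so the It\^{o} form reduces to \eqref{eqn:SDE_Brownian_Ito}. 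This is the point of the somewhat unusual-looking Stratonovich drift: it is exactly the correction needed to kill the It\^{o} drift and thereby deliver a driftless martingale representation.

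Second, I would compute the generator using \Cref{thm:generator}. Applying \eqref{eqn:A_Ito} with $\tilde X = 0$ and $\sigma_i = E_i$ gives
\begin{align*}
    \mathcal{A} f = \frac{1}{2}\sum_{i=1}^n \bracket{E_i[E_i[f]] - (\nabla_{E_i} E_i)[f]},
\end{align*}
which by the orthonormal-frame identity \eqref{eqn:laplace_Ei} of \Cref{prop:grad_div_laplace_Ei} equals $\tfrac{1}{2}\Delta f$. As a sanity check I would also carry out the Stratonovich computation via \eqref{eqn:A_Strat}, where the $-\tfrac{1}{2}(\nabla_{E_i}E_i)[f]$ term comes directly from the drift rather than from the It\^{o} correction, and the two derivations should agree term by term.

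There is no genuine obstacle here; the proof is essentially an assembly of previously proved pieces. The only conceptual point worth flagging is well-definedness: the process \eqref{eqn:SDE_Brownian} is written in terms of a chosen orthonormal frame, yet Brownian motion should be intrinsic. This is not a loose end because \Cref{prop:grad_div_laplace_Ei} has already established that $\sum_i E_i[E_i[f]] - \sum_i (\nabla_{E_i}E_i)[f]$ is frame-independent, so the generator, and hence the law of the process, does not depend on the choice of $\{E_i\}$. I would close with a one-line remark to that effect so the reader sees that \eqref{eqn:SDE_Brownian} and \eqref{eqn:SDE_Brownian_Ito} define a geometrically canonical object despite their frame-dependent appearance.
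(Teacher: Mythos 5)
Your proposal is correct and follows essentially the same route as the paper: invoke \Cref{thm:Strat_Ito} to show the drift exactly cancels the It\^{o}--Stratonovich correction, then apply \Cref{thm:generator} and the frame expression \eqref{eqn:laplace_Ei} from \Cref{prop:grad_div_laplace_Ei} to identify the generator with $\tfrac12\Delta$. The only cosmetic difference is that the paper computes the generator from the Stratonovich form \eqref{eqn:A_Strat} while you lead with the It\^{o} form \eqref{eqn:A_Ito}; these agree term by term, as you note, and your closing remark on frame-independence matches the paper's own remark following the theorem.
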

\begin{proof}
    First, we show that \eqref{eqn:SDE_Brownian_Ito} is equivalent to \eqref{eqn:SDE_Brownian}.
    By \Cref{thm:Strat_Ito} with $\sigma_i = E_i$, the drift term in \eqref{eqn:SDE_Brownian} is precisely
\[
    -\frac12\sum_{i=1}^n \nabla_{E_i} E_i
    = -\frac12\sum_{i=1}^n \nabla_{\sigma_i} \sigma_i,
\]
which cancels the It\^{o}--Stratonovich correction term of \eqref{eqn:X_tilde}, yielding the drift-free It\^{o} form \eqref{eqn:SDE_Brownian_Ito}.

Next, from \eqref{eqn:A_Strat} and Proposition~\ref{prop:grad_div_laplace_Ei}, the generator of \eqref{eqn:SDE_Brownian} is
\[
    \mathcal{A} f
    = -\frac12 \sum_{i=1}^n \nabla_{E_i} E_i[f]
      + \frac12 \sum_{i=1}^n E_i(E_i[f]),
\]
which corresponds to $\frac12 \Delta$ from \eqref{eqn:laplace_Ei}.
\end{proof}

When $M=\Re^1$, \eqref{eqn:SDE_Brownian} or \eqref{eqn:SDE_Brownian_Ito} reduce to
\begin{align*}
    dx = dW,
\end{align*}
corresponding to the standard Brownian motion on $\Re^1$.

\begin{remark}
The given stochastic differential equations \eqref{eqn:SDE_Brownian} and \eqref{eqn:SDE_Brownian_Ito} depend on the choice of the orthonormal frame $\{E_i\}$. 
For example, if the It\^{o} SDE \eqref{eqn:SDE_Brownian_Ito} is reformulated via the frame $\{F_i\}$ given by \eqref{eqn:F_i}, we obtain
\begin{align}
    d x = \sum_{i,j} R_{ij} E_j dW_i. \label{eqn:SDE_Brownian_Ito_2}
\end{align}
However, as discussed in \cite[Section 4.8.1]{chirikjian2009stochastic}, this does not matter. 
Although \eqref{eqn:SDE_Brownian_Ito_2} differs from \eqref{eqn:SDE_Brownian_Ito} pathwise, 
the two SDEs have the same generator $\frac12\Delta$, and thus define the same diffusion process in law.  
This invariance follows from \Cref{prop:grad_div_laplace_Ei}, where the expression of $\Delta$ given by \eqref{eqn:laplace_Ei} is independent of the choice of orthonormal frame.
\end{remark}
\begin{remark}
    The drift term $-\frac{1}{2}\sum_i \nabla_{E_i} E_i$ in the Stratonovich form \eqref{eqn:SDE_Brownian} 
    measures how the basis vectors of the tangent space fail to remain parallel under the Levi--Civita connection, reflecting the local spreading or contraction of the frame due to curvature.  
    This is precisely the curvature-induced correction that ensures the resulting diffusion process has generator $\tfrac{1}{2}\Delta$.
\end{remark}

The given expression \eqref{eqn:SDE_Brownian_Ito} for the It\^{o} SDE for Brownian motion may appear to be counterintuitive, as there is no correction term. 
But, we can be assured that the resulting generator, provided that it is properly computed as an operator on the manifold, is indeed given by the Laplace--Beltrami operator according to \Cref{thm:generator}, where the covariant derivative in \eqref{eqn:A_Ito} provides the required correction terms. 

However, if the It\^{o} formulation \eqref{eqn:SDE_Brownian_Ito} is expressed in the embedding Euclidean space, an additional drift term should be included to ensure that the process evolves on the manifold. 
As the drift term is normal to the tangent space at each point of the manifold, it does not contribute to the generator.
Consequently, the generator of the resulting augmented form of \eqref{eqn:SDE_Brownian_Ito} is still the Laplace--Beltrami operator. 
This will be more explicitly discussed in \Cref{sec:SO3}.

\subsection{Brownian Motion on a Lie Group}\label{sec:BM_G}

In this section, we consider a special case when the manifold $M$ is an $n$-dimensional Lie group $G$.
In \Cref{thm:Brownian_motion}, we considered an arbitrary orthonormal frame $\{E_1, \ldots E_n\}$, which is often formulated locally.
For a Lie group, we can construct a specific choice of a Riemannian metric and an orthonormal frame in a global fashion as follows. 

As the Lie algebra $\liealgebra$ is an $n$-dimensional vector space, there are an inner product $\langle\cdot, \cdot\rangle_\liealgebra:\liealgebra \times \liealgebra\rightarrow\Re$, and an orthonormal basis of $\liealgebra$ given by
\begin{align}
    \{ e_1^\liealgebra, e_2^\liealgebra, \ldots, e_n^\liealgebra\} \in \liealgebra^n, \label{eqn:basis_algebra}
\end{align}
satisfying $\langle e_i^\liealgebra, e_j^\liealgebra \rangle_\liealgebra = \delta_{ij}$. 

The inner product on $\liealgebra$ is extended to a left-invariant Riemannian metric on $G$ via the left-translation. 
More specifically, for tangent vectors $v, w\in T_gG$, the metric of $v$ and $w$ is obtained by
\begin{align}
    \g(v, w) = \langle (L_{g^{-1}})_* v, (L_{g^{-1}})_* w \rangle_\liealgebra = \langle g^{-1}v, g^{-1}w \rangle_\liealgebra, \label{eqn:metric_G}
\end{align}
which is left-invariant by construction, i.e., $(L_g)^* \g = \g$ for any $g\in G$. 

Further, an orthonormal basis of the tangent space can be constructed by translating the basis \eqref{eqn:basis_algebra} of $\liealgebra$ into $T_gG$ according to
\begin{align}
    E_i = (L_g)_* e_i^{\liealgebra} = g e_i^{\liealgebra} \in T_gG. \label{eqn:frame_G}
\end{align}
Since \eqref{eqn:frame_G} is constructed by a left translation of the orthonormal basis \eqref{eqn:basis_algebra}, and the metric \eqref{eqn:metric_G} is left invariant, it is straightforward to show \eqref{eqn:frame_G} is an orthonormal frame of $G$, i.e., $\g(E_i,E_j)=\langle g^{-1} E_i, g^{-1} E_j\rangle_\liealgebra =\langle e_i^\liealgebra, e_j^\liealgebra \rangle_\liealgebra =  \delta_{ij}$. 

Another desirable feature is that the orthonormal frame \eqref{eqn:frame_G} is defined globally on $G$. 
As such, we can construct stochastic differential equations for a Brownian motion on a Lie group using \eqref{eqn:frame_G} as follows. 
\begin{theorem}[Brownian Motion on a Lie Group]\label{thm:Brownian_motion_G}
    Let $G$ be a Lie group endowed with the left-invariant Riemannian metric given by \eqref{eqn:metric_G}, 
    where we identify $\liealgebra^*$ with $\liealgebra$ using the inner product on $\liealgebra$. 
    Let $\{e^\liealgebra_1,\dots,e^\liealgebra_n\}$ be an orthonormal basis of $\liealgebra$, constructed by \eqref{eqn:basis_algebra}.

    Consider the following Stratonovich stochastic differential equation
    \begin{align}
        g^{-1}dg &= \frac12 \sum_{i=1}^n \ad^*_{e_i^\liealgebra} e_i^\liealgebra \, dt + \sum_{i=1}^n e_i^\liealgebra \circ dW_i,
        \label{eqn:SDE_Brownian_G}
    \end{align}
    and its equivalent It\^{o} form
    \begin{align}
        g^{-1} dg &= \sum_{i=1}^n e_i^\liealgebra  dW_i.
        \label{eqn:SDE_Brownian_Ito_G}
    \end{align}
    Then the infinitesimal generator of either \eqref{eqn:SDE_Brownian_G} or \eqref{eqn:SDE_Brownian_Ito_G} is 
    \[
        \mathcal{A} f = \frac12 \Delta f,
    \]
    such that the resulting process is the Brownian motion on $G$.
\end{theorem}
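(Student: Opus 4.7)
The plan is to specialize \Cref{thm:Brownian_motion} to the orthonormal frame $E_i(g) = g\,e_i^\liealgebra$ constructed by \eqref{eqn:frame_G}, and then show that the covariant-derivative drift in \eqref{eqn:SDE_Brownian} reduces, after left-translation by $g^{-1}$, to the coadjoint expression in \eqref{eqn:SDE_Brownian_G}. Since the preceding text already verifies that $\{E_i\}$ is a globally defined orthonormal frame with respect to the left-invariant metric \eqref{eqn:metric_G}, invoking \Cref{thm:Brownian_motion} immediately yields both the Stratonovich and It\^{o} forms together with the generator $\tfrac12\Delta$; the only nontrivial task is the algebraic identification of $\nabla_{E_i}E_i$ in terms of $\ad^*$.

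I would carry out this identification via the Koszul formula, exploiting the left-invariance of $\{E_i\}$ and of $\g$. For any triple of left-invariant vector fields the inner products $\g(E_i,E_j)$ are constant functions on $\G$, so the three directional-derivative terms in the Koszul formula vanish. Setting $X = Y = E_i$ and $Z = E_k$, the bracket $[E_i,E_i]=0$ kills one of the remaining terms, leaving
\[
2\,\g(\nabla_{E_i}E_i, E_k) = -2\,\g([E_i,E_k], E_i).
\]
Since $[E_i,E_k] = X_{[e_i^\liealgebra, e_k^\liealgebra]}$ and $\g$ is left-invariant, the right-hand side evaluates to $-2\langle [e_i^\liealgebra, e_k^\liealgebra], e_i^\liealgebra \rangle_\liealgebra$, which by the defining relation of $\ad^*$ under the identification $\liealgebra^* \cong \liealgebra$ equals $-2\langle e_k^\liealgebra, \ad^*_{e_i^\liealgebra} e_i^\liealgebra \rangle_\liealgebra$. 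Reading off coordinates in the orthonormal basis $\{e_k^\liealgebra\}$ and using \eqref{eqn:metric_G} then gives
\[
g^{-1}\nabla_{E_i}E_i = -\,\ad^*_{e_i^\liealgebra}e_i^\liealgebra.
\]

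Substituting this identity into the Stratonovich form of \Cref{thm:Brownian_motion} and left-translating by $g^{-1}$ yields \eqref{eqn:SDE_Brownian_G} directly, while \eqref{eqn:SDE_Brownian_Ito_G} follows either by the same left-translation of the drift-free form \eqref{eqn:SDE_Brownian_Ito}, or equivalently by applying \Cref{thm:Strat_Ito} to \eqref{eqn:SDE_Brownian_G} and noting that the added It\^{o}--Stratonovich correction $\tfrac12 \sum_i \nabla_{E_i}E_i$ exactly cancels the coadjoint drift. The generator statement $\mathcal{A} f = \tfrac12 \Delta f$ is then inherited verbatim from \Cref{thm:Brownian_motion}. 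The main obstacle I anticipate is bookkeeping rather than new ideas: cleanly distinguishing the left-invariant vector fields $E_i \in \mathfrak{X}(\G)$ from their generating Lie-algebra elements $e_i^\liealgebra \in \liealgebra$, keeping the pairing between $\liealgebra^*$ and $\liealgebra$ consistent with the identification via $\langle\cdot,\cdot\rangle_\liealgebra$, and propagating the left-translation factor $g$ uniformly so that the Lie-algebra-valued displacement $g^{-1}dg$ ends up with the correct coefficients.
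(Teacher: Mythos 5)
Your proposal is correct and follows essentially the same route as the paper: both invoke \Cref{thm:Brownian_motion} with the left-invariant frame \eqref{eqn:frame_G} and reduce the drift via the Koszul formula, using left-invariance of $\g$ to kill the directional-derivative terms and the duality $\langle \ad^*_{e_i^\liealgebra} e_i^\liealgebra, \zeta\rangle_\liealgebra = \langle e_i^\liealgebra, \ad_{e_i^\liealgebra}\zeta\rangle_\liealgebra$ to arrive at $g^{-1}\nabla_{E_i}E_i = -\ad^*_{e_i^\liealgebra}e_i^\liealgebra$. The only cosmetic difference is that the paper first derives the general formula $(\nabla_X Y)(g) = \tfrac12 (L_g)_*([\eta,\xi]-\ad^*_\eta\xi-\ad^*_\xi\eta)$ for arbitrary left-invariant fields before specializing to $X=Y=E_i$, whereas you substitute $X=Y=E_i$ directly.
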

\begin{proof}
    Let $X,Y,Z\in\mathfrak{X}(M)$ be vector fields on a Riemannian manifold $M$.
    From the fundamental theorem of Riemannian geometry (or, the Koszul formula)~\cite[(5.1)]{lee2006riemannian}, the covariant derivative is related to the Jacobi--Lie bracket according to
    \begin{align}
        2\,\g(\nabla_X Y, Z) 
&= X\!\big[\g(Y,Z)\big] + Y\!\big[\g(Z,X)\big] - Z\!\big[\g(X,Y)\big] \nonumber\\
&\quad - \g(X,[Y,Z]) - \g(Y,[X,Z]) + \g(Z,[X,Y]) . \label{eqn:Koszul}
    \end{align}
    Suppose that the manifold is a Lie group $G$ and $X,Y,Z$ are left-invariant vector fields $G$ obtained by
    \[
        X=(L_g)_*\eta=g\eta,\quad Y=(L_g)_*\xi=g\xi,\quad Z=(L_g)_*\zeta=g\zeta,
    \]
    for $\eta,\xi,\zeta\in\mathfrak g$.
    Since the metric $\g$ of \eqref{eqn:metric_G} is left-invariant,
    $\g(Y,Z),\g(Z,X)$ and $\g(X,Y)$ become constants.
    For example, $\g(Y,Z)= \langle \xi, \zeta \rangle_\liealgebra$.
    Thus, first three terms of the right-hand side of \eqref{eqn:Koszul} correspond to the Lie-derivatives of a constant, and therefore, vanish. 

    Further, since $[X,Y]=(L_g)_*[\eta,\xi]$ from \eqref{eqn:bracket_g}, and using the left-invariance of $\g$, the remaining terms of \eqref{eqn:Koszul} are rearranged into
    \begin{align*}
        2 & \langle (L_{g^{-1}})_*(\nabla_X Y), \zeta \rangle_{\mathfrak g}\\
                             &= - \langle \eta,[\xi ,\zeta]\rangle_{\mathfrak g} 
                             - \langle \xi ,[\eta ,\zeta]\rangle_{\mathfrak g}  
                             + \langle \zeta, [\eta ,\xi]  \rangle_{\mathfrak g} \\
                             &= - \langle \operatorname{ad}_\xi^* \eta, \zeta \rangle_{\mathfrak g}
                             - \langle \operatorname{ad}_\eta^* \xi, \zeta \rangle_{\mathfrak g}
                             + \langle [\eta, \xi], \zeta \rangle_{\mathfrak g},
    \end{align*}
    where we identified $\liealgebra^*$ with $\liealgebra$ with the inner product on $\liealgebra$ in the last equality.  
    Since this holds for any arbitrary $\zeta\in\mathfrak g$, we obtain the following identity to compute the covariant derivative of vector fields on a Lie group using the bracket and the co-ad operator:
    \[
        (\nabla_X Y)(g) = \frac12\,(L_g)_*\!\left([\eta,\xi]- \operatorname{ad}_\eta^* \xi - \operatorname{ad}_\xi^* \eta\right).
    \]
    In particular, setting $X=Y=E_i=(L_g)_* e_i^{\mathfrak g}$ yields
    \begin{equation}\label{eqn:nabla_Ei_Ei_G}
        \nabla_{E_i} E_i = -\, (L_g)_*(\ad_{e_i^{\liealgebra}}^{*} e_i^{\liealgebra}) = -g (\ad_{e_i^{\liealgebra}}^{*} e_i^{\liealgebra}).
    \end{equation}
    Substituting this and \eqref{eqn:frame_G} into \eqref{eqn:SDE_Brownian}, we obtain \eqref{eqn:SDE_Brownian_G}.
    Next, \eqref{eqn:SDE_Brownian_Ito_G} is obtained by substituting \eqref{eqn:frame_G} into \eqref{eqn:SDE_Brownian_Ito}.
\end{proof}

Comparing \eqref{eqn:SDE_Brownian_G} with \eqref{eqn:SDE_Brownian}, the sum of covariant derivatives in the drift vector field of \eqref{eqn:SDE_Brownian} is replaced by the co-ad operator. 
In fact, we can further show that the drift term of \eqref{eqn:SDE_Brownian_G} vanishes for any unimodular group as summarized below. 

\begin{corollary}[Brownian Motion on a Unimodular Group]\label{cor:Brownian_motion_G_unimodular}
    If the Lie group $G$ is unimodular, the stochastic differential equation \eqref{eqn:SDE_Brownian_G} in a Stratonovich form for Brownian motion reduces to
    \begin{align}
        g^{-1}dg &= \sum_{i=1}^n e_i^\liealgebra \circ dW_i,
        \label{eqn:SDE_Brownian_G_unimodular}
    \end{align}
\end{corollary}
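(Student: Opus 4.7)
The plan is to show that the drift vector $\sum_{i=1}^n \ad^*_{e_i^\liealgebra} e_i^\liealgebra$ in \eqref{eqn:SDE_Brownian_G} vanishes identically whenever $G$ is unimodular, after which the Stratonovich form \eqref{eqn:SDE_Brownian_G} collapses directly to \eqref{eqn:SDE_Brownian_G_unimodular}. The key algebraic fact I would invoke is the standard characterization of unimodularity: a Lie group $G$ is unimodular if and only if $\tr(\ad_\xi)=0$ for every $\xi\in\liealgebra$, equivalently $\det(\Ad_g)=1$ for all $g\in G$. I would quote this as the working definition for the proof.

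The main computation is to test the candidate drift against an arbitrary $\zeta\in\liealgebra$ using the inner product and the definition of the co-adjoint operator. Concretely, I would write
\begin{align*}
\Big\langle \sum_{i=1}^n \ad^*_{e_i^\liealgebra} e_i^\liealgebra,\; \zeta \Big\rangle_\liealgebra
&= \sum_{i=1}^n \langle e_i^\liealgebra,\; \ad_{e_i^\liealgebra}\zeta \rangle_\liealgebra
= \sum_{i=1}^n \langle e_i^\liealgebra,\; [e_i^\liealgebra,\zeta] \rangle_\liealgebra \\
&= -\sum_{i=1}^n \langle e_i^\liealgebra,\; [\zeta,e_i^\liealgebra] \rangle_\liealgebra
= -\sum_{i=1}^n \langle e_i^\liealgebra,\; \ad_\zeta e_i^\liealgebra \rangle_\liealgebra,
\end{align*}
where I used the definition of $\ad^*$ (identifying $\liealgebra^*$ with $\liealgebra$ via the inner product), the coincidence $\ad_\xi = [\xi,\cdot]$, and antisymmetry of the bracket. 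The last expression is precisely $-\tr(\ad_\zeta)$ computed in the orthonormal basis $\{e_i^\liealgebra\}$, since the trace of a linear operator is the sum of its diagonal entries relative to any orthonormal basis.

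By the unimodularity hypothesis, $\tr(\ad_\zeta)=0$ for all $\zeta\in\liealgebra$. Hence the pairing on the left vanishes for every $\zeta$, and non-degeneracy of the inner product forces $\sum_{i=1}^n \ad^*_{e_i^\liealgebra} e_i^\liealgebra = 0$. Substituting this identity into the drift of \eqref{eqn:SDE_Brownian_G} yields \eqref{eqn:SDE_Brownian_G_unimodular}.

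There is no real obstacle here beyond choosing the correct characterization of unimodularity and handling the identification $\liealgebra^*\cong\liealgebra$ consistently with the convention already fixed in \Cref{thm:Brownian_motion_G}. If I wanted to avoid quoting the trace characterization of unimodularity as a black box, the alternative route is to show directly that $\sum_i \ad^*_{e_i^\liealgebra} e_i^\liealgebra$ is, up to sign, the image under the metric isomorphism of the linear functional $\zeta \mapsto \tr(\ad_\zeta)$; this functional is the differential at the identity of $\log\det\Ad_g$, which vanishes precisely for unimodular groups. Either route leads to the same one-line conclusion.
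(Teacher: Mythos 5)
Your proposal is correct and follows essentially the same route as the paper: pair the drift $J=\sum_i \ad^*_{e_i^\liealgebra} e_i^\liealgebra$ with an arbitrary element of $\liealgebra$, use the definition of $\ad^*$ and antisymmetry of the bracket to identify the pairing with $-\tr(\ad_\zeta)$, and invoke the trace characterization of unimodularity together with nondegeneracy of the inner product to conclude $J=0$. No gaps.
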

\begin{proof}
    Let the drift term in \eqref{eqn:SDE_Brownian_G} be $\frac{1}{2}J\in\liealgebra$, i.e., 
    \begin{align}
        J = \sum_{i=1}^n \ad^*_{e_i^\liealgebra} e_i^\liealgebra. \label{eqn:J}
    \end{align}
    For $\eta\in\g$, using the duality and antisymmetry of the ad operator,
    \begin{align}
        \langle J, \eta \rangle_\liealgebra
        & = \sum_{i=1}^n \langle \ad^*_{e_i^\liealgebra} e_i^\liealgebra, \eta \rangle_\liealgebra
        = - \sum_{i=1}^n \langle e_i^\liealgebra, \ad_{\eta} e_i^\liealgebra \rangle_\liealgebra \nonumber\\
        & = -\mathrm{tr}[\ad_\eta],\label{eqn:Jeta}
    \end{align}
    where the last equality is from the common formulation for the trace of a linear operator. 
    It has been shown that for a unimodular group $\mathrm{tr}[\ad_\eta]=0$ for any $\eta\in\liealgebra$~\cite[(10.65)]{chirikjian2011stochastic,encmath2014},
    which implies that $\langle J,\eta \rangle_\liealgebra=0$ for any $\eta\in\liealgebra$.
    Therefore, we have $J=0$. 
    Substituting this into \eqref{eqn:SDE_Brownian_G} yields \eqref{eqn:SDE_Brownian_G_unimodular}.
\end{proof}

\begin{remark}\label{rem:unimodular}
    \Cref{cor:Brownian_motion_G_unimodular} states that for any unimodular Lie group, the drift-free form of the Stratonovich stochastic differential equation \eqref{eqn:SDE_Brownian_G_unimodular} can be utilized for Brownian motion.
    A unimodular group is a group with a Haar measure that is both left and right invariant~\cite[Ch. 14]{chirikjian2011stochastic}, and a large class of Lie groups in science and engineering is unimodular, including:
    \begin{itemize}
        \item Abelian groups (e.g., $\Sph^1$, $\mathbb{T}^n$)
        \item Compact groups (e.g., $SO(n)$, $SU(n)$, $Sp(n)$, $O(n)$)
        \item Semisimple groups (e.g., $SL(n,\Re)$ for $n\geq 2$)
        \item Nilpotent groups (e.g., the Heisenberg group $H_n$)
        \item Euclidean groups ($SE(n)$ and $E(n)$)
        \item The general linear group $GL(n,\Re)$
    \end{itemize}
    Moreover, any Lie group that admits a bi-invariant Riemannian metric is unimodular.
    Therefore, \Cref{cor:Brownian_motion_G_unimodular} applies broadly, including to any Lie group with a bi-invariant metric.
    The affine group $\mathsf{Aff}(\Re)$ is a notable example of a non-unimodular Lie group (see \Cref{sec:Affine}).
\end{remark}

\begin{remark}
    It should be noted that the results of \Cref{thm:Brownian_motion_G} and \Cref{cor:Brownian_motion_G_unimodular} are based on the left-invariant Riemannian metric \eqref{eqn:metric_G}, and the left-invariant frame \eqref{eqn:frame_G} of a Lie group.
    If another metric or frame is utilized, then the group can be viewed as a Riemannian manifold and apply the general result of \Cref{thm:Brownian_motion}.
\end{remark}

\section{Brownian Motion on an Embedded Manifold}\label{sec:BMEM}

This section formulates stochastic differential equations for Brownian motion on a manifold embedded in a higher-dimensional Euclidean space. 
As illustrated in \Cref{sec:ex}, a coordinate representation of a stochastic differential equation may be valid only locally. 
This limitation can be addressed by formulating the stochastic differential equation on the ambient Euclidean space.
However, this requires careful construction to ensure the resulting process evolves on the manifold. 
Our approach is to design a stochastic differential equation whose generator on the ambient space coincides with the generator of Brownian motion on the embedded manifold.

\subsection{Embedded Manifold}

Consider an $n$-dimensional Riemannian manifold $M$, embedded in the $\bar n$-dimensional Euclidean space with $\bar n > n$. 
In fact, according to the Whitney embedding theorem~\cite[Chapter 6]{lee2003smooth}, any $n$-dimensional manifold admits a smooth embedding into a Euclidean space $\Re^{2n}$. 

The metric on $M$ is induced from an inner product on $\Re^{\bar n}$. 
As such, the metric is often denoted by a pairing, i.e., $\g(\cdot, \cdot) = \pair{\cdot, \cdot}$, in this section. 
At any point $x\in M$, the tangent space $T_x M$ is an $n$-dimensional subspace of $\Re^{\bar n}$. 
As presented in \Cref{sec:BM_Ei}, let $\{E_i(x)\}$ be an orthonormal frame for the tangent space $T_x M$. 
Using this, we can define an orthogonal projection operator $P(x): \Re^{\bar n}\rightarrow T_xM$ from the ambient $\Re^{\bar n}$ to the tangent space $T_xM$ by
\begin{align}
    P(x) v = \sum_{i=1}^n \pair{v, E_i(x)} E_i(x), \label{eqn:P}
\end{align}
for $v\in\Re^{\bar n}$. 
While \eqref{eqn:P} is written in terms of a specific orthonormal frame, the orthonormal projection is uniquely determined once the inner product is chosen. 
The projection operator is linear, and when represented by an $\bar n \times \bar n$ matrix, it satisfies the following properties 
\begin{gather}
    P(x)^2 = P(x) \quad \text{(idempotent)},\label{eqn:P^2}\\
    P^T(x) = P(x) \quad \text{(symmetric, self-adjoint)}.\label{eqn:P_sym}
\end{gather}
When $P(x)$ is not represented by a matrix, the left-hand side of \eqref{eqn:P} is also written as $(P(x))(v)$ to denote the projection of $v\in\Re^{\bar n}$ to $T_x M$. 

Further, it decomposes the ambient Euclidean space $\Re^{\bar n}$ into the $n$-dimensional tangent space $T_x M = \mathrm{range}[ P(x) ]$ and its orthogonal complement $(T_x M)^\perp = \mathrm{kernel}[ P(x) ]$, referred to as the \textit{normal space}, whose dimension is $\bar n - n$. 
As before, the vector field on $M$ is denoted by $\mathfrak{X}(M)$, and the vector field to the normal space is denoted by $\mathfrak{X}^\perp(M)$, i.e., given $N\in\mathfrak{X}^\perp(M)$, we have $N(x)\in (T_x M)^\perp$ for any $x\in M$.

A smooth function $f\in C^\infty(M)$ on the manifold can be \textit{extended} to $\bar f \in C^\infty(\Re^{\bar n})$ on the ambient $\Re^{\bar n}$, by the condition that
\begin{align}
    f(x) = \bar f(x), \label{eqn:f_bar}
\end{align}
for any $x\in M$. 
The formulation of $\bar f$ is not unique, as it can take any value outside of $M$ as long as it is smooth. 
Similarly, a vector field $X\in\mathfrak{X}(M)$ is extended to $\bar X\in\mathfrak{X}(\Re^{\bar n})$ with
\begin{align}
    X(x) = \bar X(x), \label{eqn:X_bar}
\end{align}
for any $x\in M$. 
In the subsequent developments, the over line indicates an extension of a function or a vector field to $\Re^{\bar n}$, or the standard operator on $\Re^{\bar n}$. 
For example, as shown in \eqref{eqn:f_bar} and \eqref{eqn:X_bar}, $\bar f$ and $\bar X$ denote extension of $f\in C^\infty(M)$ and $X\in\mathfrak{X}(M)$, respectively. 
For another example, $\grad$ denotes the gradient on $M$ defined by \eqref{eqn:grad}, and $\overline{\grad}$ represents the gradient operator on the ambient $\Re^{\bar n}$, which is the standard gradient in the classic calculus. 
These extensions allow the standard operations on $\Re^{\bar n}$ be utilized to obtain the corresponding operators on $M$, which are independent of the specific choice of the extension. 

First, it has been shown that the covariant derivative on $T_xM$ is related to that on $\Re^{\bar n}$ as follows.
\begin{theorem}{(Gauss Formula \cite{lee2006riemannian})}\label{thm:Gauss}
Let $X,Y\in\mathfrak{X}(M)$, and suppose $\bar X, \bar Y \in\mathfrak{X}(\Re^{\bar n})$ are their extension. 
Then,
    \begin{align}
        \overline\nabla_{\bar X} \bar Y = \nabla_X Y + \mathrm{II}(X,Y), \label{eqn:Gauss_formula}
    \end{align}
    where $\mathrm{II}:\mathfrak{X}(M)\times\mathfrak{X}(M)\rightarrow\mathfrak{X}^\perp(M)$ is symmetric and bilinear, and it is referred to as the \textit{second fundamental form}.\footnote{The first fundamental form corresponds to the Riemannian metric.}
\end{theorem}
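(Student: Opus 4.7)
The plan is to define the decomposition pointwise using the orthogonal projection $P(x)$ onto $T_xM$, setting $\nabla_X Y(x) := P(x)\bigl(\overline\nabla_{\bar X}\bar Y(x)\bigr)$ and $\mathrm{II}(X,Y)(x) := (I-P(x))\bigl(\overline\nabla_{\bar X}\bar Y(x)\bigr)$, and then verifying that $\nabla$ is the Levi--Civita connection on $(M,\g)$ and that $\mathrm{II}$ takes values in $\mathfrak{X}^\perp(M)$ and is symmetric and $C^\infty(M)$-bilinear. The decomposition \eqref{eqn:Gauss_formula} is then immediate by construction, provided the two pieces enjoy the claimed properties.

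The first step is to check that $\overline\nabla_{\bar X}\bar Y|_M$ is intrinsic, i.e., independent of the chosen extensions. Since $\overline\nabla$ is $C^\infty$-linear in its lower slot, the value at $x\in M$ depends only on $\bar X(x)=X(x)$, and the directional derivative of $\bar Y$ along $X(x)$ may be computed along any curve through $x$ with velocity $X(x)$; choosing such a curve inside $M$ makes $\bar Y$ agree with $Y$ along it, so the derivative is intrinsic to $M$. Next, I would verify that $\nabla$ so defined satisfies the axioms of the Levi--Civita connection. The linearity and Leibniz properties \eqref{eqn:cov_deriv_lower_linear}--\eqref{eqn:cov_deriv_Leibniz} are inherited from $\overline\nabla$ after applying $P$, noting that the extra $(X[f])Y$ term produced by the Leibniz rule is already tangent and hence unaffected by projection. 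Metric compatibility follows by differentiating $\g(Y,Z)=\pair{\bar Y,\bar Z}$ along $X$ and observing that only the tangential components of $\overline\nabla_{\bar X}\bar Y$ and $\overline\nabla_{\bar X}\bar Z$ contribute when paired with the tangent vectors $Z$ and $Y$. Torsion-freeness reduces to $\overline\nabla_{\bar X}\bar Y-\overline\nabla_{\bar Y}\bar X=[\bar X,\bar Y]$, followed by projecting to $T_xM$ and invoking the fact that the Jacobi--Lie bracket of two tangential vector fields is again tangent to $M$. By uniqueness, $\nabla$ coincides with the intrinsic Levi--Civita connection of $(M,\g)$.

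The remaining task is to establish the properties of $\mathrm{II}$. Symmetry is obtained by taking the normal component of the same torsion-free identity for $\overline\nabla$: since $[\bar X,\bar Y]|_M=[X,Y]\in\mathfrak{X}(M)$ is annihilated by $I-P$, one gets $\mathrm{II}(X,Y)=\mathrm{II}(Y,X)$. The crucial and subtlest point is full $C^\infty(M)$-bilinearity, which is what promotes $\mathrm{II}$ from an $\mathbb{R}$-bilinear operator to a genuine tensor. For the second slot, $\mathrm{II}(X,fY)=(I-P)\bigl((X[f])\bar Y+f\,\overline\nabla_{\bar X}\bar Y\bigr)=f\,\mathrm{II}(X,Y)$, because the Leibniz term $(X[f])\bar Y$ is tangent and thus killed by $I-P$; tensoriality in the first slot follows from tensoriality of $\overline\nabla$ in that slot, or alternatively from symmetry. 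The main obstacle, in my view, is not any single computation but the careful bookkeeping needed to show that every construction depends only on $X$ and $Y$ restricted to $M$ and not on the arbitrary extensions $\bar X,\bar Y$; once this invariance is secured, the algebraic properties follow cleanly from the corresponding identities for $\overline\nabla$ combined with the idempotence \eqref{eqn:P^2} and self-adjointness \eqref{eqn:P_sym} of $P$.
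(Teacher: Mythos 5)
Your proposal is correct and is exactly the standard argument for the Gauss formula (the one in the cited reference \cite{lee2006riemannian}): split $\overline\nabla_{\bar X}\bar Y$ into tangential and normal parts via $P$, verify that the tangential part satisfies the Levi--Civita axioms and invoke uniqueness, and read off symmetry and $C^\infty(M)$-bilinearity of $\mathrm{II}$ from torsion-freeness and the Leibniz rule. The paper itself states this theorem without proof as a known result, so there is nothing to contrast with; your treatment of extension-independence and of the tensoriality of $\mathrm{II}$ covers the only genuinely delicate points.
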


This theorem states that when the covariant derivative on the ambient Euclidean space is decomposed into the tangent space $T_x M$ and the normal space $(T_xM)^\perp$, the tangential component coincides with the covariant derivative on the embedded $M$. 
Equivalently, the difference between the covariant derivative $\bar\nabla_{\bar X}\bar Y$ in the ambient $\Re^{\bar n}$ and the covariant derivative $\nabla_X Y$ in the embedded $M$ is captured by the second fundamental form $\mathrm{II}(X,Y)$, which is normal to the tangent space. 

The second fundamental form encodes the \textit{extrinsic curvature}, or how much the manifold is curved when observed from the ambient Euclidean space in which the manifold is embedded.
Specifically, it defines the \textit{mean curvature vector}, $H\in\mathfrak{X}^\perp (M)$ by
\begin{align}
    H = \sum_{i=1}^{n} \mathrm{II}(E_i, E_i),  \label{eqn:mean_curvature}
\end{align}
where $\{E_i\}$ is the orthonormal frame~\cite[Chapter 6]{do1992riemannian}.\footnote{The definition in~\cite{do1992riemannian} includes an additional scaling factor $\frac{1}{n}$.}
As \eqref{eqn:mean_curvature} is essentially a trace operator, it is independent of the choice of the frame. 
However, it should be noted that both the second fundamental form and the mean curvature vector are extrinsic properties, and they depend on the choice of the ambient space and the specific way the manifold is embedded within it.

For a hypersurface with $\bar n = n+1$, the mean curvature vector reduces to the product of a unit-normal vector and a scalar-valued function on $M$, referred to as \textit{mean curvature}, which measures how the surface area changes if the surface is deformed at a point of $M$~\cite[Chapter 8]{lee2006riemannian}. 
As such, if the mean curvature vanishes at every point of $M$, it is referred to as \textit{minimal} (or more precisely critical) surface, e.g., plane or catenoid.

Next, we find how several operators on the manifold are related to their extensions on the ambient Euclidean space.
\begin{prop}
    Let $f\in C^\infty(M)$ and $X,Y\in \mathfrak{X}(M)$. 
    Their extensions to the ambient $\Re^{\bar n}$ are denoted by the over line. 
    The Lie derivative, covariant derivative, and Hessian on $M$ can be obtained by the corresponding operator in the ambient $\Re^{\bar n}$ and the projection operator, as given by
    \begin{align}
        X[f] & = \bar X[\bar f], \label{eqn:Lie_deriv_proj}\\
        \nabla_X Y & = P(x) (\bar\nabla_{\bar X} \bar Y), \label{eqn:cov_deriv_proj}\\
        \mathrm{Hess}_f (X,Y) & = \overline{\mathrm{Hess}}_{\bar f} (\bar X, \bar Y) + (\mathrm{II}(X,Y))[f], \label{eqn:Hess_proj}
    \end{align}
    where $\mathrm{II}$ denotes the second fundamental form introduced in \Cref{thm:Gauss}.
\end{prop}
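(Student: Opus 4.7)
The plan is to establish the three identities in turn, beginning with the Lie derivative, and then leveraging it together with the Gauss formula from \Cref{thm:Gauss} to obtain the covariant derivative and Hessian identities.

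First, I would prove $X[f] = \bar X[\bar f]$ pointwise on $M$. Pick any $x \in M$ and choose a smooth curve $\gamma:(-\epsilon,\epsilon)\to M$ satisfying $\gamma(0)=x$ and $\gamma'(0)=X(x)$. Since $\bar X(x) = X(x) = \gamma'(0)$ by \eqref{eqn:X_bar} and $\bar f(\gamma(t)) = f(\gamma(t))$ for every $t$ by \eqref{eqn:f_bar}, both directional derivatives reduce to $\tfrac{d}{dt}\big|_{t=0} f(\gamma(t))$, which makes them equal. The argument only uses that $X$ is tangent to $M$ and that $\bar f$ agrees with $f$ on $M$, so the particular choice of extension is irrelevant.

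Second, for the covariant derivative identity, I would apply the Gauss formula \eqref{eqn:Gauss_formula} directly. Decomposing $\overline\nabla_{\bar X}\bar Y = \nabla_X Y + \mathrm{II}(X,Y)$ with respect to $T_x M \oplus (T_x M)^\perp$, and then applying the projection $P(x)$ yields \eqref{eqn:cov_deriv_proj} immediately, since $P(x)$ fixes the tangential piece $\nabla_X Y \in T_x M$ and annihilates the normal piece $\mathrm{II}(X,Y) \in (T_x M)^\perp$.

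Third, for the Hessian, I would start from the intrinsic definition $\mathrm{Hess}_f(X,Y) = X[Y[f]] - (\nabla_X Y)[f]$ in \eqref{eqn:Hess}. Observe that $\bar Y[\bar f]$ is itself a smooth extension of $Y[f]$ to $\Re^{\bar n}$ by the first identity; applying that identity a second time gives $X[Y[f]] = \bar X[\bar Y[\bar f]]$ on $M$. I would then rewrite $\bar X[\bar Y[\bar f]] = \overline{\mathrm{Hess}}_{\bar f}(\bar X,\bar Y) + (\overline\nabla_{\bar X}\bar Y)[\bar f]$ using the ambient definition of the Hessian, invoke the Gauss formula to split $(\overline\nabla_{\bar X}\bar Y)[\bar f] = (\nabla_X Y)[\bar f] + (\mathrm{II}(X,Y))[\bar f]$, and use the first identity once more to replace $(\nabla_X Y)[\bar f]$ with $(\nabla_X Y)[f]$, which cancels the corresponding term in the definition and leaves \eqref{eqn:Hess_proj}.

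The main subtlety is interpreting the symbol $(\mathrm{II}(X,Y))[f]$ in the statement: since $\mathrm{II}(X,Y)$ is normal to $M$, it cannot directly differentiate a function defined only on $M$, so it must be read as $(\mathrm{II}(X,Y))[\bar f]$ applied to some extension. I would note that although $\overline{\mathrm{Hess}}_{\bar f}(\bar X,\bar Y)$ and $(\mathrm{II}(X,Y))[\bar f]$ separately depend on the chosen extension of $f$, the left-hand side $\mathrm{Hess}_f(X,Y)$ is intrinsic, so their sum must also be extension-independent, making the identity well-posed and consistent with different choices of $\bar f$.
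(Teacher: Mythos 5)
Your proposal is correct and follows essentially the same route as the paper: the curve argument for $X[f]=\bar X[\bar f]$, the Gauss formula plus projection for $\nabla_X Y$, and the expansion of $\bar X[\bar Y[\bar f]]$ via the ambient Hessian and Gauss formula for \eqref{eqn:Hess_proj}. Your added observation that $(\mathrm{II}(X,Y))[f]$ must be read as acting on an extension $\bar f$ (with the sum nonetheless extension-independent) is a legitimate point of care that the paper leaves implicit, but it does not change the substance of the argument.
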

\begin{proof}
    Let $\gamma:\Re\rightarrow M$ be a path on $M$, satisfying $\gamma(0)=x$ and $\gamma'(0)=X(x)$. 
    We have
    \begin{align*}
        X[f] = \frac{d}{d\epsilon}\bigg|_{\epsilon=0} f(\gamma(\epsilon)).
    \end{align*}
    But, this is identical to
    \begin{align*}
        \bar X[\bar f] = \frac{d}{d\epsilon}\bigg|_{\epsilon=0} \bar f(\gamma(\epsilon)),
    \end{align*}
    as $f\circ\gamma = \bar f\circ\gamma $ and $\gamma'(0)=X(\gamma(0))=\bar X(\gamma(0))$. 
    This shows \eqref{eqn:Lie_deriv_proj}.

    Next, \eqref{eqn:cov_deriv_proj} is a direct consequence of \eqref{eqn:Gauss_formula}.
    Alternatively, this can be verified by showing that the right-hand side of \eqref{eqn:cov_deriv_proj} satisfies the defining properties of the covariant derivative on $M$.

    Finally, from \eqref{eqn:Hess} and \eqref{eqn:Lie_deriv_proj}, the Hessian on $M$ is written as
    \begin{align*}
        \mathrm{Hess}_f(X,Y) & = \bar X[\bar Y[\bar f]] - (\nabla_X Y)[f]\\
                             & = \overline{\mathrm{Hess}}_{\bar f}(\bar X, \bar Y) + (\bar\nabla_{\bar X}\bar Y)[f] -(\nabla_X Y)[f],
    \end{align*}
    where the second equality is obtained by applying \eqref{eqn:Hess} in the ambient $\Re^{\bar n}$.
    This yields \eqref{eqn:Hess_proj} when combined with the Gauss formula \eqref{eqn:Gauss_formula}.
\end{proof}

\subsection{Brownian Motion on an Embedded Manifold}

Now, we construct a stochastic differential equation on the ambient Euclidean space $\Re^{\bar n}$ to obtain Brownian motion on the embedded manifold $M$. 
In \Cref{sec:BM_Ei}, we injected the noise along each axis of an orthonormal frame, and designed the drift such that the resulting generator is half of the Laplace--Beltrami operator. 
While this approach is mathematically sound, it often yields a local formulation as it is impossible to choose an orthonormal frame that is globally valid on $M$ in general.

Here, we bypass this issue by projecting the orthonormal frame of the ambient $\Re^{\bar n}$ into the tangent space $T_x M$ as follows.
\begin{definition}[Pseudo-frame]\label{def:pseudo-frame}
    Let $\{e_1,\ldots, e_{\bar n}\}$ be an orthonormal basis in the ambient Euclidean space $\Re^{\bar n}$. 
The \textit{pseudo-frame} is a set of tangent vectors in $T_x M$ obtained by
\begin{align}
    \{ P(x)e_1, P(x) e_2, \ldots P(x) e_{\bar n}\}\in (T_x M)^{\bar n}, \label{eqn:pseudo_frame}
\end{align}
where $P$ is the orthonormal projection presented in \eqref{eqn:P}.
\end{definition}
Clearly, this set spans the tangent space $T_x M$, as its range is the range of the projection operator $P(x)$ at $x\in M$. 
However, it is neither linearly independent nor orthonormal. 
For example, there is a redundancy as the $n$-dimensional tangent space is spanned by $\bar n > n$ vectors, and  $\pair{P(x)e_i, P(x)e_j}\neq \delta_{ij}$ in general.

However, the desirable feature is that \eqref{eqn:pseudo_frame} is constructive in a global fashion, thereby avoiding the limitation of the local formulation of orthonormal frames on $M$.
Further, \eqref{eqn:pseudo_frame} can be utilized to compute the trace of a linear operator on $T_x M$ as summarized below.

\begin{prop}
    Let $A:T_xM\rightarrow T_xM$ be a linear operator on $T_xM$, and let $\bar A$ be its extension to $\Re^{\bar n}$. 
    The trace of $A$ at $x\in M$ is obtained by using the pseudo-frame \eqref{eqn:pseudo_frame} as
    \begin{align}
        \sum_{i=1}^n \pair{E_i(x), A E_i(x)} = \sum_{i=1}^{\bar n} \pair{P(x) e_i, \bar A P(x)e_i}, \label{eqn:tr_proj}
    \end{align}
    where $\{E_i\}$ is an orthonormal frame of $M$. 
\end{prop}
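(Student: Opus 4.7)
The plan is to expand each projected basis vector $P(x)e_i$ in the orthonormal frame $\{E_j(x)\}$ of $T_xM$ and then collapse the resulting double sum via the Parseval identity for the ambient orthonormal basis $\{e_i\}_{i=1}^{\bar n}$.

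First, I would rewrite \eqref{eqn:P} using the symmetry of the inner product to get
\[
P(x)e_i = \sum_{j=1}^n \pair{e_i, E_j(x)} E_j(x).
\]
Since $P(x)e_i \in T_xM$ and $\bar A$ is an extension of $A$, we have $\bar A P(x)e_i = A P(x)e_i$, so I may freely drop the bar on $A$ inside the right-hand side of \eqref{eqn:tr_proj}. Substituting the expansion into both slots and using bilinearity of $\pair{\cdot,\cdot}$ together with linearity of $A$, the right-hand side becomes
\[
\sum_{i=1}^{\bar n} \sum_{j=1}^n \sum_{k=1}^n \pair{e_i, E_j(x)}\pair{e_i, E_k(x)} \pair{E_j(x), A E_k(x)}.
\]

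Next, I would interchange the order of summation to isolate the factor $\sum_{i=1}^{\bar n} \pair{e_i, E_j(x)}\pair{e_i, E_k(x)}$. Since $\{e_i\}_{i=1}^{\bar n}$ is an orthonormal basis of $\Re^{\bar n}$, Parseval's identity yields
\[
\sum_{i=1}^{\bar n} \pair{e_i, u}\pair{e_i, v} = \pair{u, v}
\]
for any $u,v\in\Re^{\bar n}$; applied to $u=E_j(x)$, $v=E_k(x)$ with the orthonormality $\pair{E_j,E_k}=\delta_{jk}$, this reduces the double index pair $(j,k)$ to $j=k$, collapsing the expression to $\sum_{j=1}^n \pair{E_j(x), A E_j(x)}$, which is the left-hand side of \eqref{eqn:tr_proj}.

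This argument is essentially routine; there is no substantive obstacle. The only point requiring care is the compatibility between the trace on $T_xM$ (taken with respect to the restricted inner product and the frame $\{E_j\}$) and the trace-like sum in the ambient space using the over-complete pseudo-frame $\{P(x)e_i\}$. This compatibility is exactly what Parseval provides, and it explains conceptually why the redundancy of the $\bar n$ projected vectors is absorbed without any extra normalization: the failure of the pseudo-frame to be orthonormal is precisely compensated by its over-completeness, making the quadratic form $\sum_i \pair{P(x)e_i, A P(x)e_i}$ invariant of the ambient basis and equal to the intrinsic trace $\tr(A)$ on $T_xM$.
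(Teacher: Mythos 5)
Your proof is correct. It does, however, take a different route from the paper's. The paper uses the self-adjointness $P = P^T$ to rewrite the right-hand side as $\sum_{i=1}^{\bar n}\pair{e_i,(P\bar A P)e_i}$, recognizes this as the ambient trace of the operator $P\bar A P$, and then exploits the basis-independence of the trace by switching to an adapted orthonormal basis $\{t_1,\dots,t_n\}\cup\{n_{n+1},\dots,n_{\bar n}\}$ of $T_xM\oplus(T_xM)^\perp$, where the normal contributions vanish because $Pn_i=0$. You instead expand each $P(x)e_i$ in the intrinsic frame $\{E_j\}$ and collapse the resulting triple sum with the completeness relation $\sum_{i=1}^{\bar n}\pair{e_i,u}\pair{e_i,v}=\pair{u,v}$. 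Both arguments are elementary linear algebra of comparable length; the paper's version makes explicit the conceptual point that the pseudo-frame sum is literally $\mathrm{tr}(P\bar A P)$, while yours avoids introducing the auxiliary adapted basis and isolates exactly where the over-completeness of the pseudo-frame compensates for its failure to be orthonormal. One small remark: you assert $\bar A P(x)e_i = A P(x)e_i$ outright, which presumes the extension satisfies $\bar A|_{T_xM}=A$ exactly; even under a weaker notion of extension (agreement only up to a normal component), your computation survives because $\pair{P e_i,\cdot}$ annihilates any normal part, and it would be worth stating this explicitly.
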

\begin{proof}
    Let an orthonormal basis of $T_x M$ be $\{t_1, \ldots, t_n\}$, and let an orthonormal basis of $(T_x M)^\perp$ be $\{n_{n+1},\ldots, n_{\bar n}\}$. 
    Together, they form an orthonormal basis of $T_x\Re^{\bar n} = T_x M \oplus (T_x M)^\perp$.

    Using $P=P^T$, the right-hand side of \eqref{eqn:tr_proj} is rearranged into
    \begin{align*}
        \sum_{i=1}^{\bar n} \pair{P(x) e_i, \bar A P(x)e_i} = \sum_{i=1}^{\bar n} \pair{e_i, (P(x) \bar A P(x))e_i},
    \end{align*}
    which is interpreted as the trace of a linear operator $P\bar AP$. 
    Since the trace is independent of the choice of the orthonormal frame, we utilize $\{t_i\}\cup\{n_i\}$ to obtain 
    \begin{align*}
        \sum_{i=1}^{\bar n}  \langle P(x) e_i, \bar A P(x)e_i \rangle & = \sum_{i=1}^{n} \pair{t_i, (P(x) \bar A P(x))t_i}\\
                                                                      & + \sum_{i=n+1}^{\bar n} \pair{n_i, (P(x) \bar A P(x))n_i}.
    \end{align*}
    Here, the second term of the right-hand side vanishes, as $P(x)n_i=0$, 
    and using $P=P^T$ and $Pt_i = t_i$, the first term can be rearranged to obtain
    \begin{align*}
        \sum_{i=1}^{\bar n} \pair{P(x) e_i, \bar A P(x)e_i} = \sum_{i=1}^{n} \pair{t_i, A t_i},
    \end{align*}
    which corresponds to the trace of $A$ on $M$. 
    Again, as the trace is independent of the choice of the orthonormal basis, the above is identical to the left-hand side of \eqref{eqn:tr_proj}.
\end{proof}

Equation \eqref{eqn:tr_proj} establishes that the intrinsic trace of a linear operator on the tangent space $T_xM$ can be computed by summing over the pseudo-frame given by \eqref{eqn:pseudo_frame}.
Now, utilizing this, we construct a Brownian motion by injecting noise along the vectors of the pseudo-frame \eqref{eqn:pseudo_frame}.

\begin{theorem}[Brownian Motion on an Embedded Manifold]\label{thm:Brownian_motion_embedded}
    Let $M$ be an $n$-dimensional manifold embedded in $\Re^{\bar n}$ for $\bar n > n$. 
    Additionally, let $\{e_i\}$ be the standard basis of $\Re^{\bar n}$ and let $P(x):\Re^{\bar n}\rightarrow T_x M$ be the orthogonal projection given by \eqref{eqn:P}.
    Consider the following Stratonovich stochastic differential equation
    \begin{align}
        dx & = -\frac{1}{2} \sum_{i=1}^{\bar n} \nabla_{P(x)e_i} P(x)e_i\, dt + \sum_{i=1}^{\bar n} P(x)e_i \circ dW_i\label{eqn:SDE_Bro_Emb},
    \end{align}
    and its equivalent It\^{o} form
    \begin{align}
        dx & = \frac{1}{2} H\, dt + \sum_{i=1}^{\bar n} P(x)e_i\, dW_i. \label{eqn:SDE_Bro_Emb_Ito}
    \end{align}
    Then the infinitesimal generator of either \eqref{eqn:SDE_Bro_Emb} or \eqref{eqn:SDE_Bro_Emb_Ito} is 
    \[
        \mathcal{A} f = \frac12 \Delta f,
    \]
    and the resulting process is the Brownian motion on $M$.
\end{theorem}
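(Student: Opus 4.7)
The plan is to establish the theorem in two steps: first verify that the Stratonovich form \eqref{eqn:SDE_Bro_Emb} and the Itô form \eqref{eqn:SDE_Bro_Emb_Ito} describe the same diffusion, and then show the generator of this diffusion coincides with $\tfrac{1}{2}\Delta$. Both steps hinge on two ingredients already available: the Gauss formula \eqref{eqn:Gauss_formula}, which separates ambient and intrinsic covariant derivatives via the second fundamental form $\mathrm{II}$, and the trace identity \eqref{eqn:tr_proj}, which lets the redundant pseudo-frame $\{P(x)e_i\}_{i=1}^{\bar n}$ act as an orthonormal frame for the purpose of computing traces on $T_xM$.

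For the equivalence, I would regard \eqref{eqn:SDE_Bro_Emb} as a Stratonovich SDE in the ambient space $\Re^{\bar n}$, where the Christoffel symbols vanish and $\bar\nabla$ reduces to the ordinary directional derivative. Applying \Cref{thm:Strat_Ito} in $\Re^{\bar n}$ with $\sigma_i = P(x)e_i$, the Itô drift is
\[
\tilde X \;=\; -\tfrac{1}{2}\sum_{i=1}^{\bar n}\nabla_{P e_i}P e_i \;+\; \tfrac{1}{2}\sum_{i=1}^{\bar n}\bar\nabla_{P e_i}P e_i.
\]
The Gauss formula rewrites each term in the second sum as $\bar\nabla_{Pe_i}Pe_i = \nabla_{Pe_i}Pe_i + \mathrm{II}(Pe_i,Pe_i)$, so the tangential contributions cancel pairwise and leave $\tilde X = \tfrac{1}{2}\sum_i \mathrm{II}(Pe_i,Pe_i)$. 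Because $\mathrm{II}$ is symmetric and bilinear on tangent vectors, the trace identity \eqref{eqn:tr_proj} (equivalently, the matrix identity $\sum_i Pe_i(Pe_i)^T = P$ restricted to $T_xM$) collapses this sum to $\sum_{i=1}^n \mathrm{II}(E_i,E_i) = H$, recovering \eqref{eqn:SDE_Bro_Emb_Ito}.

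For the generator, I would apply \Cref{thm:generator} to the intrinsic Stratonovich form with $\sigma_i = Pe_i$, producing
\[
\mathcal{A}f \;=\; \tfrac{1}{2}\sum_{i=1}^{\bar n}\bigl\{(Pe_i)[(Pe_i)[f]] - (\nabla_{Pe_i}Pe_i)[f]\bigr\} \;=\; \tfrac{1}{2}\sum_{i=1}^{\bar n}\mathrm{Hess}_f(Pe_i,Pe_i),
\]
where the second equality is the Hessian identity \eqref{eqn:Hess}. Applying the same trace argument from \Cref{prop:grad_div_laplace_Ei} and \eqref{eqn:tr_proj} to the symmetric bilinear form $\mathrm{Hess}_f$ on $T_xM$ then reduces this to $\tfrac12\sum_{i=1}^n \mathrm{Hess}_f(E_i,E_i) = \tfrac12\Delta f$ via \eqref{eqn:laplace_Ei_Hess}. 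An alternative verification via the ambient Itô's lemma (\Cref{thm:Ito}) applied to the extension $\bar f$ would instead use \eqref{eqn:Hess_proj} to split $\overline{\mathrm{Hess}}_{\bar f}(Pe_i,Pe_i) = \mathrm{Hess}_f(Pe_i,Pe_i) - (\mathrm{II}(Pe_i,Pe_i))[\bar f]$; the normal-direction derivative $\tfrac12 H[\bar f]$ then cancels exactly between the mean-curvature drift and the ambient Hessian term, again leaving $\tfrac12\Delta f$.

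The main obstacle is justifying that the non-orthonormal, redundant pseudo-frame can substitute for an orthonormal frame when computing intrinsic second-order quantities on $T_xM$. This requires using the idempotent and self-adjoint properties \eqref{eqn:P^2} and \eqref{eqn:P_sym} of $P$, together with the fact that $\mathrm{II}$ and $\mathrm{Hess}_f$ are tensorial in their tangential arguments, so that only the restriction of $\sum_i Pe_i\otimes Pe_i$ to $T_xM$ matters; once this is in hand, both the Gauss-formula cancellation for the drift and the trace reduction for the generator follow by essentially the same computation used in \Cref{prop:grad_div_laplace_Ei}.
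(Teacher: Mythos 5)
Your proposal is correct and relies on the same two key ingredients as the paper --- the Gauss formula \eqref{eqn:Gauss_formula} and the pseudo-frame trace identity \eqref{eqn:tr_proj} --- but organizes the argument in the opposite direction. The paper starts from the It\^{o} form: it applies \eqref{eqn:A_Ito_Hess} in the ambient space, so the generator initially contains the ambient Hessian $\overline{\mathrm{Hess}}_{\bar f}$ plus the drift $\tfrac12 H[f]$, and the conversion \eqref{eqn:Hess_proj} from ambient to intrinsic Hessian produces the term $-\tfrac12\sum_i(\mathrm{II}(E_i,E_i))[f]$ that cancels the mean-curvature drift; the Stratonovich form is derived afterwards. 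You instead establish the Stratonovich--It\^{o} equivalence first (the same Gauss-formula-plus-trace computation, run in the other direction) and then compute the generator directly from the intrinsic Stratonovich form via \eqref{eqn:A_Strat}, obtaining $\tfrac12\sum_i\mathrm{Hess}_f(Pe_i,Pe_i)$ with the \emph{intrinsic} Hessian, so no cancellation is needed --- only the trace reduction to $\tfrac12\sum_{i=1}^n\mathrm{Hess}_f(E_i,E_i)=\tfrac12\Delta f$ via \eqref{eqn:laplace_Ei_Hess}. Your route is slightly cleaner for the generator, while the paper's makes the geometric role of the mean-curvature drift (cancelling the normal contribution of the ambient Hessian) more visible; your ``alternative verification'' paragraph is essentially the paper's proof. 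One point you rightly flag, and which the paper also uses implicitly: \eqref{eqn:tr_proj} is stated for scalar-valued traces, so applying it to the normal-bundle-valued $\mathrm{II}$ requires a componentwise argument (or pairing with a fixed normal covector), justified by the tensoriality of $\mathrm{II}$ together with \eqref{eqn:P^2} and \eqref{eqn:P_sym}.
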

\begin{proof}
    From \eqref{eqn:A_Ito_Hess}, the generator of \eqref{eqn:SDE_Bro_Emb_Ito} is given by
    \begin{align}
        \mathcal{A} f = \frac{1}{2} H[f] + \frac{1}{2} \sum_{i=1}^{\bar n} \overline{\mathrm{Hess}}_{\bar f} (Pe_i, Pe_i), \label{eqn:A_emb_0}
    \end{align}
    where the over line denotes the operators defined in the ambient $\Re^{\bar n}$. 
    As the Hessian is bilinear and symmetric, there is a linear operator $\Hessmatrix: T_x\Re^{\bar n} \rightarrow T_x\Re^{\bar n}$ defined such that
    \begin{align}
        \overline{\mathrm{Hess}}_{\bar f} (X, Y) = \pair{  \Hessmatrix X , Y}. \label{eqn:Hess_matrix}
    \end{align}
    Thus, we have
    \begin{align}
        \sum_{i=1}^{\bar n} &\, \overline{\mathrm{Hess}}_{\bar f} (Pe_i, P e_i) 
        = \sum_{i=1}^{\bar n} \pair{ P e_i, \Hessmatrix Pe_i} \nonumber \\
                            & = \sum_{i=1}^n \pair{ E_i, \Hessmatrix E_i}\quad \text{(from \eqref{eqn:tr_proj})}\nonumber\\
                            & = \sum_{i=1}^{n} \overline{\mathrm{Hess}}_{\bar f} (E_i, E_i).\quad \text{(from \eqref{eqn:Hess_matrix})}  \label{eqn:Hess_tmp_0}
    \end{align}
    Note that the Hessian in the last equality is the usual Hessian in $\Re^{\bar n}$. 
    Therefore, it can be converted into the intrinsic Hessian on $M$ by \eqref{eqn:Hess_proj} to obtain
    \begin{align*}
        \sum_{i=1}^{\bar n} \overline{\mathrm{Hess}}_{\bar f} (Pe_i, P e_i)
                            & = \sum_{i=1}^{n} \left(\mathrm{Hess}_{f} (E_i, E_i) - (\mathrm{II}(E_i, E_i))[f]\right).
    \end{align*}
    Substituting this into \eqref{eqn:A_emb_0}, the drift term of \eqref{eqn:A_emb_0} is canceled out as $H = \sum_{i=1}^n \mathrm{II} (E_i, E_i)$ from \eqref{eqn:mean_curvature}. 
    Therefore, the generator reduces to
    \begin{align*}
        \mathcal{A} f= \frac{1}{2} \sum_{i=1}^{n} \mathrm{Hess}_{f} (E_i, E_i) = \frac{1}{2}\Delta f,
    \end{align*}
    where the second equality is due to \eqref{eqn:laplace_Ei_Hess}.
    Thus, the generator of \eqref{eqn:SDE_Bro_Emb_Ito} is half of the Laplace--Beltrami operator, and it represents the Brownian motion on $M$.

    Next, we convert \eqref{eqn:SDE_Bro_Emb_Ito} into a Stratonovich form. 
    Using the Gauss formula \eqref{eqn:Gauss_formula}, the It\^{o}--Stratonovich conversion term in \eqref{eqn:X_tilde} is
    \begin{align*}
        \frac{1}{2}\sum_{i=1}^{\bar n} \bar \nabla_{\overline{Pe_i}} \overline{Pe_i} = 
        \frac{1}{2} \sum_{i=1}^{\bar n} \{\nabla_{Pe_i} Pe_i +  \mathrm{II}(P e_i, P e_i)\}.
    \end{align*}
    Using the fact that the second fundamental form $\mathrm{II}$ is symmetric and bilinear, we can follow the same development as presented in \eqref{eqn:Hess_tmp_0} for the last term of the above to obtain
    \begin{align*}
        \frac{1}{2}\sum_{i=1}^{\bar n} \bar \nabla_{\overline{Pe_i}} \overline{Pe_i} 
        & = \frac{1}{2} \sum_{i=1}^{\bar n} \nabla_{Pe_i} Pe_i + \frac{1}{2}\sum_{i=1}^n \mathrm{II}(E_i, E_i) \\
        & = \frac{1}{2} \sum_{i=1}^{\bar n} \nabla_{Pe_i} Pe_i + \frac{1}{2} H,
    \end{align*}
    where the second equality follows from \eqref{eqn:mean_curvature}.
    Substituting this into \eqref{eqn:X_tilde} with $\tilde X = \frac{1}{2}  H$, the drift field of the Stratonovich form is
    \begin{align*}
        X = \tilde X - \frac{1}{2}\sum_{i=1}^{\bar n} \bar \nabla_{\overline{Pe_i}} \overline{Pe_i} =
-\frac{1}{2} \sum_{i=1}^{\bar n} \nabla_{Pe_i} Pe_i.
    \end{align*}
    This shows \eqref{eqn:SDE_Bro_Emb}.
\end{proof}

Compared with \Cref{thm:Brownian_motion}, the formulation of stochastic differential equations on the ambient Euclidean space avoids the challenges and limitations of utilizing a local orthogonal frame on $M$.
One drawback is that it requires $\bar n>n$ Wiener processes, when constructing a Brownian motion on an $n$-dimensional manifold $M$. 

It is interesting to observe that the Stratonovich form \eqref{eqn:SDE_Bro_Emb} in $\Re^{\bar n}$ has the same structure as \eqref{eqn:SDE_Brownian} on $M$.
Specifically, the orthonormal frame $\{E_i\}$ in \eqref{eqn:SDE_Brownian} is replaced by the pseudo-frame $\{Pe_i\}$ in \eqref{eqn:SDE_Bro_Emb}.
However, the It\^{o} formulation \eqref{eqn:SDE_Bro_Emb_Ito} in $\Re^{\bar n}$ requires an additional drift vector field obtained by the mean curvature vector, namely $\frac{1}{2} H$, compared to \eqref{eqn:SDE_Brownian_Ito}.

\begin{remark}
It is intriguing to observe that the drift vector field in the It\^{o} formulation \eqref{eqn:SDE_Bro_Emb_Ito}, which is required to ensure that the sample path evolves on the manifold, is precisely the mean curvature vector,
which measures the extrinsic curvature of the manifold, or the degree to which the manifold is bent in the ambient space. Importantly, this drift term is purely normal to the tangent space, hence it does not affect the generator acting on intrinsic functions.

For example, let $\gamma(t)$ be a geodesic on a manifold passing through a point $p\in M$ at $t=0$, i.e., $\gamma(0)=p$. As a geodesic, it is acceleration-free on the manifold, $\nabla_{\dot\gamma}\dot\gamma = 0$. However, its second fundamental form $\mathrm{II}(\dot\gamma(0), \dot\gamma(0))$ captures the normal acceleration at $p$ in the ambient Euclidean space, ensuring that the geodesic curve remains on the manifold. 

While a Brownian sample path is not smooth, the mean curvature vector provides the analogous normal acceleration in the ambient Euclidean space that keeps the process confined to the manifold. Because Brownian motion is isotropic, all tangent directions are equally likely, and thus the total correction is obtained by summing the second fundamental form $\mathrm{II}(E_i,E_i)$ over an orthonormal frame. 

This exemplifies how a fundamental geometric quantity of the manifold plays a critical role in shaping stochastic processes on manifolds.
This provides a unique geometric interpretations for Brownian motion on a manifold. 
\end{remark}

\subsection{Brownian Motion on an Embedded Lie Group}

Next, we discuss how the results of \Cref{thm:Brownian_motion_G} and \Cref{thm:Brownian_motion_embedded} can be applied to a Lie group $G$ embedded in a Euclidean space $\mathbb{R}^{\bar n}$. 
In \Cref{thm:Brownian_motion_G}, Brownian motion on a Lie group is obtained using an orthonormal frame constructed from the left-invariant metric defined by \eqref{eqn:metric_G}, 
whereas in \Cref{thm:Brownian_motion_embedded}, Brownian motion on an embedded manifold is formulated based on the metric induced from the ambient Euclidean metric through the embedding. 

These two formulations can be unified when the two metrics coincide, that is, when the induced metric is left-invariant:
\begin{align}
    \langle (L_g)_*\eta, (L_g)_*\zeta \rangle_{\mathbb{R}^{\bar n}}
    = \langle \eta, \zeta \rangle_{\mathbb{R}^{\bar n}}, 
    \label{eqn:metric_compatible}
\end{align}
for any $\eta, \zeta \in \mathfrak{g}$.
Here the subscript $\mathbb{R}^{\bar n}$ explicitly indicates that 
$\langle \cdot, \cdot \rangle_{\mathbb{R}^{\bar n}}$ denotes the metric in the ambient Euclidean space.
When \eqref{eqn:metric_compatible} is satisfied, Brownian motion on the embedded Lie group can be constructed consistently from either formulation, leading to the following unified construction.

\begin{theorem}[Brownian Motion on an Embedded Lie Group]\label{thm:Brownian_motion_G_Emb}
    Let $G$ be a Lie group embedded in the Euclidean space, whose induced metric is left-invariant, i.e., \eqref{eqn:metric_compatible} is satisfied.
    Let $\{e^\liealgebra_1,\dots,e^\liealgebra_n\}$ be an orthonormal basis on $\liealgebra$, constructed by \eqref{eqn:basis_algebra}, and
    we identify $\liealgebra^*$ with $\liealgebra$ using the inner product on $\liealgebra$.

    Consider the following Stratonovich stochastic differential equation
    \begin{align}
        g^{-1}dg &= \frac12 \sum_{i=1}^n \ad^*_{e_i^\liealgebra} e_i^\liealgebra \, dt + \sum_{i=1}^n e_i^\liealgebra \circ dW_i,
        \label{eqn:SDE_Brownian_G_Emb}
    \end{align}
    and its equivalent It\^{o} form
    \begin{align}
        dg &=  \frac{1}{2} H \,dt + \sum_{i=1}^n g e_i^\liealgebra  dW_i.
        \label{eqn:SDE_Brownian_Ito_G_Emb}
    \end{align}
    Then the infinitesimal generator of either \eqref{eqn:SDE_Brownian_G_Emb} or \eqref{eqn:SDE_Brownian_Ito_G_Emb} is 
    \[
        \mathcal{A} f = \frac12 \Delta f,
    \]
    such that the resulting process is the Brownian motion on $G$.
\end{theorem}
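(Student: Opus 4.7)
The plan is to derive this by combining the intrinsic Lie group result \Cref{thm:Brownian_motion_G} with a careful It\^{o}--Stratonovich conversion performed in the ambient Euclidean space.

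First, I would observe that under the compatibility condition \eqref{eqn:metric_compatible}, the inner product induced on $T_gG$ from the ambient metric coincides with the left-invariant metric \eqref{eqn:metric_G} built from $\langle \cdot, \cdot \rangle_{\liealgebra}$. Hence \Cref{thm:Brownian_motion_G} applies verbatim, and the Stratonovich equation \eqref{eqn:SDE_Brownian_G_Emb} generates $\tfrac{1}{2}\Delta$, identifying it as Brownian motion on $G$. This step is essentially a reinterpretation: the same SDE that was derived intrinsically in \Cref{thm:Brownian_motion_G} can now be read as an SDE in $\Re^{\bar n}$ whose sample paths stay on the embedded $G$.

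Second, I would convert \eqref{eqn:SDE_Brownian_G_Emb} to its It\^{o} form using the analogue of \Cref{thm:Strat_Ito} taken relative to the flat connection $\overline{\nabla}$ of $\Re^{\bar n}$, now viewing the SDE as evolving in the ambient space. For the left-invariant diffusion fields $\sigma_i(g) = g e_i^{\liealgebra}$, the ambient covariant derivative collapses to a plain directional derivative that is easy to evaluate. Applying the Gauss formula \eqref{eqn:Gauss_formula} together with the intrinsic identity \eqref{eqn:nabla_Ei_Ei_G} established inside the proof of \Cref{thm:Brownian_motion_G}, each $\overline{\nabla}_{\sigma_i} \sigma_i$ decomposes into a tangential piece expressible through the co-adjoint operator and a normal piece given by the second fundamental form. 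Summing over $i$ and adding the Stratonovich drift from \eqref{eqn:SDE_Brownian_G_Emb}, the co-adjoint contributions cancel and the remaining normal components aggregate, via \eqref{eqn:mean_curvature}, to exactly $\tfrac{1}{2} H$, yielding \eqref{eqn:SDE_Brownian_Ito_G_Emb}.

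The main obstacle will be the careful bookkeeping of two different connections. The Stratonovich drift of \eqref{eqn:SDE_Brownian_G_Emb} was designed to cancel the Levi--Civita correction intrinsic to $G$, whereas the It\^{o}--Stratonovich conversion executed in $\Re^{\bar n}$ is governed by the flat ambient connection; the discrepancy between the two is precisely the normal acceleration that aggregates into the mean curvature vector. The compatibility condition \eqref{eqn:metric_compatible} is indispensable here, since it guarantees that $\{g e_i^{\liealgebra}\}_{i=1}^n$ is an ambient-orthonormal frame for $T_g G$, so that $\sum_{i=1}^{n} \mathrm{II}(g e_i^{\liealgebra}, g e_i^{\liealgebra})$ reproduces the mean curvature vector $H$ as defined in \eqref{eqn:mean_curvature} without any rescaling or appeal to the full pseudo-frame of \Cref{thm:Brownian_motion_embedded}.
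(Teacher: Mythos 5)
Your proposal is correct, and it arrives at the theorem by a route that differs from the paper's in one half and coincides in the other. For the It\^{o}--Stratonovich equivalence, you and the paper do the same computation: apply the Gauss formula \eqref{eqn:Gauss_formula} to $\bar\nabla_{ge_i^\liealgebra}ge_i^\liealgebra$, use \eqref{eqn:nabla_Ei_Ei_G} to identify the tangential part with $-g\,\ad^*_{e_i^\liealgebra}e_i^\liealgebra$, and sum the normal parts into $H$ via \eqref{eqn:mean_curvature}; you merely run it Stratonovich\,$\to$\,It\^{o} while the paper runs it It\^{o}\,$\to$\,Stratonovich. Where you genuinely diverge is in establishing that the generator is $\tfrac12\Delta$: you observe that \eqref{eqn:SDE_Brownian_G_Emb} is literally the SDE of \Cref{thm:Brownian_motion_G} and that \eqref{eqn:metric_compatible} makes the induced and left-invariant metrics (hence their Laplace--Beltrami operators and Levi--Civita connections) coincide, so \Cref{thm:Brownian_motion_G} applies verbatim. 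The paper instead recomputes the generator from the It\^{o} side using \eqref{eqn:A_Ito_Hess} and the Hessian projection identity \eqref{eqn:Hess_proj}, showing explicitly that the $\tfrac12 H[f]$ drift contribution cancels against the second-fundamental-form correction hidden in the ambient Hessian. Your route is more economical and makes the logical dependence on \Cref{thm:Brownian_motion_G} transparent; the paper's route is self-contained on the embedded side and exhibits directly why the normal drift $\tfrac12 H$ is invisible to the intrinsic generator. You also correctly flag the two points where \eqref{eqn:metric_compatible} is indispensable: it licenses both the reuse of \eqref{eqn:nabla_Ei_Ei_G} (same connection) and the identification of $\{ge_i^\liealgebra\}$ as an ambient-orthonormal frame so that $\sum_i\mathrm{II}(ge_i^\liealgebra,ge_i^\liealgebra)=H$ without rescaling. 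No gaps.
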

\begin{proof}
    From \eqref{eqn:A_Ito_Hess}, the generator of \eqref{eqn:SDE_Brownian_Ito_G_Emb} is given by
    \begin{align}
        \mathcal{A} f = \frac{1}{2} H[f] + \frac{1}{2} \sum_{i=1}^{n} \overline{\mathrm{Hess}}_{\bar f} (ge_i^\liealgebra, ge_i^\liealgebra),
    \end{align}
    where the Hessian in the last term can be converted into the intrinsic Hessian on $M$ by \eqref{eqn:Hess_proj}.
    And substituting the definition of the mean curvature vector \eqref{eqn:mean_curvature}, this reduces to
    \begin{align*}
        \mathcal{A} f= \frac{1}{2} \sum_{i=1}^{n} \mathrm{Hess}_{f} (E_i, E_i) = \frac{1}{2}\Delta f.
    \end{align*}
    Thus, \eqref{eqn:SDE_Brownian_Ito_G_Emb} represents the Brownian motion on $G$.

    Next, we convert \eqref{eqn:SDE_Brownian_Ito_G_Emb} into a Stratonovich form. 
    Using the Gauss formula \eqref{eqn:Gauss_formula} and \eqref{eqn:mean_curvature}, the It\^{o}--Stratonovich conversion term in \eqref{eqn:X_tilde} is
    \begin{align*}
        \frac{1}{2}\sum_{i=1}^{n} \bar \nabla_{ge_i^\liealgebra} ge_i^\liealgebra = 
        \frac{1}{2} \sum_{i=1}^{n} \{\nabla_{ge_i^\liealgebra } g e_i^\liealgebra +  \mathrm{II}(g e_i^\liealgebra, g e_i^\liealgebra)\}.
    \end{align*}
    Substituting \eqref{eqn:nabla_Ei_Ei_G} and \eqref{eqn:mean_curvature}, this reduces to
    \begin{align*}
        \frac{1}{2}\sum_{i=1}^{n} \bar \nabla_{ge_i^\liealgebra} ge_i^\liealgebra = 
        - \frac{1}{2} \parenth{\sum_{i=1}^{n} g \ad^*_{e_i^\liealgebra} e_i^\liealgebra}  +  \frac12 H.
    \end{align*}
    Substituting this into \eqref{eqn:X_tilde} with $\tilde X = \frac{1}{2}  H$, the drift field of the Stratonovich form is
    \begin{align*}
        X = \tilde X - \frac{1}{2}\sum_{i=1}^{n} \bar \nabla_{ge_i^\liealgebra} ge_i^\liealgebra =
        \frac{1}{2} \parenth{\sum_{i=1}^{n} g \ad^*_{e_i^\liealgebra} e_i^\liealgebra},
    \end{align*}
    which shows \eqref{eqn:SDE_Brownian_G_Emb}.
\end{proof}

Comparing this result with \Cref{thm:Brownian_motion_G} for Brownian motion on $G$, the Stratonovich form \eqref{eqn:SDE_Brownian_G_Emb} is identical to \eqref{eqn:SDE_Brownian_G}.
The It\^{o} form \eqref{eqn:SDE_Brownian_Ito_G_Emb} includes the additional drift field given by the mean curvature vector when compared with \eqref{eqn:SDE_Brownian_Ito_G}.

As presented in \Cref{cor:Brownian_motion_G_unimodular}, the drift term of \eqref{eqn:SDE_Brownian_G_Emb} vanishes when the Lie group isunimodular. 
Further, the expression of the mean curvature vector is simplified as follows. 

\begin{corollary}[Brownian Motion on an Embedded Unimodular Group]\label{cor:Brownian_motion_G_unimodular_Emb}
    If the embedded Lie group $G$ satisfying \eqref{eqn:metric_compatible} is unimodular, the stochastic differential equations \eqref{eqn:SDE_Brownian_G_Emb} and \eqref{eqn:SDE_Brownian_Ito_G_Emb} for Brownian motion reduce to
    \begin{gather}
        g^{-1}dg = \sum_{i=1}^n e_i^\liealgebra \circ dW_i, \label{eqn:SDE_Brownian_G_unimodular_Emb}\\
        dg = \frac{1}{2} H dt + \sum_{i=1}^n ge_i^\liealgebra dW_i, \label{eqn:SDE_Brownian_Ito_G_unimodular_Emb}
    \end{gather}
    where the mean curvature vector $H\in (T_gG)^\perp$ is obtained by
    \begin{align}
        H = \sum_{i=1}^n \bar\nabla_{\bar E_i} \bar E_i, \label{eqn:H_G_unimodular}
    \end{align}
    where $\bar \nabla$ denotes the covariant derivative in the ambient Euclidean space $\Re^{\bar n}$, and $\bar E_i\in\mathfrak{X}(\Re^{\bar n})$ is the extension of $E_i=ge_i^\liealgebra$ to $\Re^{\bar n}$. 
\end{corollary}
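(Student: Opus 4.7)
The plan is to split the corollary into two independent claims and handle each by assembling previously proved results. The first claim is that the Stratonovich and It\^{o} SDEs reduce to the stated drift-free and mean-curvature forms; the second is the new formula $H = \sum_{i=1}^n \bar\nabla_{\bar E_i}\bar E_i$ for the mean curvature vector. Both reductions will be driven by the same underlying fact from \Cref{cor:Brownian_motion_G_unimodular}: for a unimodular Lie group, the quantity $J = \sum_{i=1}^n \ad^*_{e_i^\liealgebra} e_i^\liealgebra$ defined in \eqref{eqn:J} vanishes.

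For the SDE reduction, I would start from \eqref{eqn:SDE_Brownian_G_Emb} in \Cref{thm:Brownian_motion_G_Emb} and observe that the drift in its Stratonovich form is exactly $\frac12 J$. The argument in the proof of \Cref{cor:Brownian_motion_G_unimodular}---pairing $J$ with an arbitrary $\eta\in\liealgebra$, using the duality of $\ad_\xi$/$\ad_\xi^*$ to rewrite the pairing as $-\tr[\ad_\eta]$, and invoking unimodularity---applies verbatim here, since the embedded structure plays no role in that computation. This immediately yields \eqref{eqn:SDE_Brownian_G_unimodular_Emb}. The It\^{o} form \eqref{eqn:SDE_Brownian_Ito_G_unimodular_Emb} is simply \eqref{eqn:SDE_Brownian_Ito_G_Emb} copied over, since the It\^{o}--Stratonovich correction in the ambient space still carries the normal component $\frac12 H$ independently of whether the tangential drift vanishes.

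For the expression of $H$, the plan is to apply the Gauss formula \eqref{eqn:Gauss_formula} termwise to the left-invariant frame $E_i = g e_i^\liealgebra$. Summing gives
\begin{align*}
\sum_{i=1}^n \bar\nabla_{\bar E_i}\bar E_i
= \sum_{i=1}^n \nabla_{E_i} E_i + \sum_{i=1}^n \mathrm{II}(E_i, E_i)
= \sum_{i=1}^n \nabla_{E_i} E_i + H,
\end{align*}
where the second equality uses the definition \eqref{eqn:mean_curvature} of $H$. It then suffices to show the intrinsic term vanishes. By identity \eqref{eqn:nabla_Ei_Ei_G} from the proof of \Cref{thm:Brownian_motion_G}, $\nabla_{E_i} E_i = -g(\ad^*_{e_i^\liealgebra} e_i^\liealgebra)$, so $\sum_{i=1}^n \nabla_{E_i} E_i = -g J$, and unimodularity gives $J=0$, leaving $H = \sum_{i=1}^n \bar\nabla_{\bar E_i}\bar E_i$.

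There is no substantial obstacle here: the proof is essentially a bookkeeping exercise that transplants the unimodular cancellation $J=0$ into the embedded setting. The only minor subtlety worth pointing out explicitly is that the cancellation of the tangential component $\sum_i \nabla_{E_i} E_i$ is precisely what makes $\sum_i \bar\nabla_{\bar E_i}\bar E_i$---which is a priori a sum with both tangential and normal parts---purely normal and equal to the mean curvature vector. This provides a clean geometric reading: for a unimodular embedded Lie group, the ambient covariant derivatives of the left-invariant frame fields accumulate only in the normal direction, and their sum is exactly the mean curvature.
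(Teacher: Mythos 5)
Your proposal is correct and follows essentially the same route as the paper: invoke $J=0$ from \Cref{cor:Brownian_motion_G_unimodular} to kill the Stratonovich drift, then combine the Gauss formula \eqref{eqn:Gauss_formula}, the definition \eqref{eqn:mean_curvature}, and the identity \eqref{eqn:nabla_Ei_Ei_G} to show $\sum_i \nabla_{E_i}E_i = -gJ = 0$, so that $H = \sum_i \bar\nabla_{\bar E_i}\bar E_i$. The closing geometric remark about the ambient derivatives accumulating purely in the normal direction is a nice addition but not a departure from the paper's argument.
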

\begin{proof}
    In the proof of \Cref{cor:Brownian_motion_G_unimodular}, we showed that
    \begin{align*}
        J = \sum_{i=1}^n \ad^*_{e_i^\liealgebra} e_i^\liealgebra = 0,
    \end{align*}
    when $G$ is unimodular.
    Substituting this into \eqref{eqn:SDE_Brownian_G_Emb} yields \eqref{eqn:SDE_Brownian_G_unimodular_Emb}.

    Next, substituting \eqref{eqn:Gauss_formula} into \eqref{eqn:mean_curvature},
    \begin{align*}
        H = \sum_{i=1}^n (\bar\nabla_{\bar E_i} \bar E_i - \nabla_{E_i} E_i ) 
        = \sum_{i=1}^n \bar\nabla_{\bar E_i} \bar E_i + g J
    \end{align*}
    where the second equality is from \eqref{eqn:nabla_Ei_Ei_G}. 
    But, since $J=0$, this is further reduced to \eqref{eqn:H_G_unimodular}.
    Substituting this into \eqref{eqn:SDE_Brownian_Ito_G_Emb}, we obtain \eqref{eqn:SDE_Brownian_Ito_G_unimodular_Emb}.
\end{proof}

\begin{remark}
    \Cref{thm:Brownian_motion_G_Emb} and \Cref{cor:Brownian_motion_G_unimodular_Emb} apply to an embedded Lie group satisfying \eqref{eqn:metric_compatible}, or a Lie group whose the left-translation acts as an isometry of the Euclidean space. 
    This includes $O(n)$, $U(n)$, $SO(n)$, or $SU(n)$ (also $Sp(n)$ if the Euclidean metric is weighted by the canonical symplectic matrix). 
    For other embedded Lie groups, the intrinsic formulation of the Stratonovich form given by \eqref{eqn:SDE_Brownian_G} in \Cref{thm:Brownian_motion_G} (or \eqref{eqn:SDE_Brownian_G_unimodular} in \Cref{cor:Brownian_motion_G_unimodular} for a unimodular group) holds (see \Cref{sec:Torus_G}). 
    Also, considering the group as an embedded Riemannian manifold with the induced metric, \Cref{thm:Brownian_motion_embedded} can be utilized as well. 
\end{remark}

\section{Examples}\label{sec:ex}

In this section, we formulate Brownian motion for several manifolds and Lie groups listed below. 

\begin{center}
\begin{tblr}{
  width = \linewidth, 
  colspec = {X[1.3, m,c] X[1.1,m,c] X[m,c]}, 
  rowspec = {|[1.0pt]Q|[0.5pt]QQQQQQQ|[1.0pt]}
}
    Name & Type & Thm./Cor. \\
    Two-sphere (\S\ref{sec:Two-Sphere}) & Manifold & \Cref{thm:Brownian_motion} \\
    Torus (\S\ref{sec:Torus}) & Manifold & \Cref{thm:Brownian_motion} \\
    Torus (\S\ref{sec:Torus_G}) & Unimodular Group & \Cref{cor:Brownian_motion_G_unimodular} \\
    Hyperbolic Space (\S\ref{sec:Hyper}) & Manifold & \Cref{thm:Brownian_motion} \\
    $n$-Sphere (\S\ref{sec:n-Sphere}) & Embedded Manifold & \Cref{thm:Brownian_motion_embedded} \\
    Special Orthogonal Group (\S\ref{sec:SO3}) & Embedded Unimodular Group & \Cref{cor:Brownian_motion_G_unimodular_Emb} \\
    Affine Group (\S\ref{sec:Affine}) & Non-unimodular Group & \Cref{thm:Brownian_motion_G}
\end{tblr}
\end{center}

\subsection{Two-Sphere}\label{sec:Two-Sphere}

Consider a stochastic process on the two-sphere $\Sph^2=\{x\in\Re^3\,|\, \|x\|=1\}\subset \Re^3$. 
We develop Brownian motion using \Cref{thm:Brownian_motion}, considering $\Sph^2$ as a Riemannian manifold.  

\paragraph{Riemannian Metric}

A point $x\in\Sph^2$ is parameterized by
\begin{align*}
    x = \begin{bmatrix}\cos\phi \sin\theta & \sin\phi \sin\theta & \cos\theta \end{bmatrix},
\end{align*}
where $\theta\in[0,\pi]$ is the co-latitude, and $\phi \in [0,2\pi)$ is the longitude. 

The tangent vectors are given by
\begin{align*}
    \deriv{x}{\theta} & = \begin{bmatrix} \cos\phi \cos\theta & \sin\phi \cos\theta & -\sin\theta\end{bmatrix},\\
    \deriv{x}{\phi} & = \begin{bmatrix} -\sin\phi \sin\theta & \cos\phi \sin\theta & 0\end{bmatrix}.
\end{align*}
The resulting components of the metric tensor induced by the standard metric on $\Re^3$ are
\begin{align*}
    \g_{\theta\theta} & = \pair{\deriv{x}{\theta},\, \deriv{x}{\theta}}_{\Re^3} = 1,\\
    \g_{\theta\phi} & = \pair{\deriv{x}{\theta},\, \deriv{x}{\phi}}_{\Re^3} = 0,\\
    \g_{\phi\phi} & = \pair{\deriv{x}{\phi},\, \deriv{x}{\phi}}_{\Re^3} = \sin^2\theta.
\end{align*}
The Christoffel symbols are obtained by \eqref{eqn:Gamma} as
\begin{align*}
    \Gamma^\theta_{\phi\phi} = - \sin\theta\cos\theta,\quad
    \Gamma^\phi_{\theta\phi} = \Gamma^\phi_{\phi\theta} = \cot\theta,
\end{align*}
and all others vanish. 

\paragraph{Orthonormal Frame}

Let an orthonormal frame be
\begin{align*}
    E_\theta & = \deriv{x}{\theta} = \deriv{}{\theta},\\
    E_\phi & = \frac{1}{\sin\theta}\deriv{x}{\phi} = \frac{1}{\sin\theta} \deriv{}{\phi}.
\end{align*}
It is straightforward to verify $\g(E_i, E_j) = \delta_{ij}$.  
The covariant derivatives of the frame are given by
\begin{align*}
    \nabla_{E_\theta} E_\theta & = \nabla_{\partial_\theta} \partial_\theta = \Gamma^k_{\theta\theta} \partial_k = 0,\\
    \nabla_{E_\phi} E_\phi & = \nabla_{\frac{1}{\sin\theta} \partial_\phi} \frac{1}{\sin\theta}\partial_\phi
    = \frac{1}{\sin\theta} \nabla_{\partial_\phi} \frac{1}{\sin\theta} \partial_\phi \\
                           & = \frac{1}{\sin^2\theta} \Gamma^k_{\phi\phi} \partial_k = -\cot\theta E_\theta,
\end{align*}
where we have applied \eqref{eqn:cov_deriv_Leibniz} for the penultimate equality. 
From \eqref{eqn:laplace_Ei}, the Laplace--Beltrami operator is
\begin{align}
    \Delta f 
    & = \frac{\partial^2 f}{\partial \theta^2} + \frac{1}{\sin^2\theta} \frac{\partial^2 f}{\partial \phi^2}
    + \cot \theta \deriv{f}{\theta},\label{eqn:Delta_sph}
\end{align}
which recovers the well-known spherical Laplacian. 

\paragraph{SDE for Brownian Motion}

From \eqref{eqn:SDE_Brownian}, the Stratonovich SDE for the Brownian motion is given by 
\begin{align}
    d\theta & = \frac{1}{2}\cot\theta\, dt +  dW_\theta,\\
    d\phi & = \frac{1}{\sin\theta} \circ  dW_\phi,
\end{align}
which is transformed into the It\^{o} SDE by \eqref{eqn:SDE_Brownian_Ito} as
\begin{align}
    d\theta & = dW_\theta, \label{eqn:SDE_Brownian_Ito_Sph_theta}\\
    d\phi & = \frac{1}{\sin\theta} dW_\phi. \label{eqn:SDE_Brownian_Ito_Sph_phi}
\end{align}
From \Cref{thm:generator}, we can verify that the generator of the above stochastic differential equations is given by the Laplace--Beltrami operator on the two-sphere.

\subsection{Torus}\label{sec:Torus}

Consider a stochastic process on a torus $\mathbb{T}^2 = \Sph^1 \times \Sph^1$,
which is considered as a Riemannian manifold to apply the results of \Cref{thm:Brownian_motion}.

\paragraph{Riemannian Metric}
A point $x\in\mathbb{T}^2$ is parameterized by
\begin{align}
x(\theta, \phi)
= \begin{bmatrix}
(R + r\cos\theta)\cos\phi\\
(R + r\cos\theta)\sin\phi\\
r\sin\theta
\end{bmatrix}, \label{eqn:torus_embedding}
\end{align}
where $\theta,\phi\in[0,2\pi)$ and $R> r>0$ are the major radius and the minor radius.

The tangent vectors are
\begin{align*}
    \deriv{x}{\theta} = 
    \begin{bmatrix}
        - r\sin\theta\cos\phi\\
        - r\sin\theta\sin\phi\\
        \;\,r\cos\theta
    \end{bmatrix},
    \quad
    \deriv{x}{\phi} = 
    \begin{bmatrix}
        -(R+r\cos\theta)\sin\phi\\
        \;\,(R+r\cos\theta)\cos\phi\\
        0
    \end{bmatrix}.
\end{align*}
The resulting metric is
\begin{align}
    \g_{\theta\theta}=r^2,\quad
    \g_{\theta\phi}=\g_{\phi\theta}=0,\quad
    \g_{\phi\phi}=(R+r\cos\theta)^2.\label{eqn:metric_Torus_induced}
\end{align}
The Christoffel symbols are 
\begin{align*}
    \Gamma^{\theta}_{\phi\phi} & =\frac{(R+r\cos\theta)\sin\theta}{r},\\
    \Gamma^{\phi}_{\theta\phi} & =\Gamma^{\phi}_{\phi\theta} =-\frac{r\sin\theta}{R+r\cos\theta},
\end{align*}
and all other $\Gamma^k_{ij}$ vanish.

\paragraph{Orthonormal Frame}

An orthonormal frame is chosen as
\begin{align*}
    E_\theta & = \frac{1}{r}\deriv{x}{\theta},\\
    E_\phi & = \frac{1}{R+r\cos\theta}\deriv{x}{\phi},
\end{align*}
and their covariant derivatives are
\begin{align*}
    \nabla_{E_\theta}E_\theta &=0, \\
    \nabla_{E_\phi}E_\phi & =\frac{\sin\theta}{\,R+r\cos\theta\,}E_\theta.
\end{align*}

The Laplace--Beltrami operator is
\begin{align*}
    \Delta f
    = \frac{1}{r^{2}}\,\frac{\partial^{2} f}{\partial\theta^{2}}
    + \frac{1}{(R+r\cos\theta)^{2}}\,\frac{\partial^{2} f}{\partial\phi^{2}}
    - \frac{\sin\theta}{r(R+r\cos\theta)}\frac{\partial f}{\partial\theta}.
\end{align*}

\paragraph{SDE for Brownian Motion}

From \eqref{eqn:SDE_Brownian}, the Stratonovich SDE for the Brownian motion is given by 
\begin{align}
    d\theta & = - \frac{\sin\theta}{2r(R+r\cos\theta)} dt +  \frac{1}{r} dW_\theta,\label{eqn:SDE_Brownian_Torus_1}\\
    d\phi & = \frac{1}{R+ r\cos\theta} \circ  dW_\phi,
\end{align}
where the drift in $\theta$ arises solely from the term $-\frac{1}{2}\nabla_{E_\phi} E_\phi$.
This is transformed into the It\^{o} SDE by \eqref{eqn:SDE_Brownian_Ito} as
\begin{align}
    d\theta & = \frac{1}{r} dW_\theta,\\
    d\phi & = \frac{1}{R+ r\cos\theta} dW_\phi.\label{eqn:SDE_Brownian_Torus_4}
\end{align}

\subsection{Torus as a Lie Group}\label{sec:Torus_G}

Consider a stochastic process on the torus $\mathbb{T}^2 = \Sph^1 \times \Sph^1$. 
Here, the torus is viewed as a Lie group embedded in $\mathbb{R}^3$ through \eqref{eqn:torus_embedding}.
This serves as an example of a unimodular Lie group embedded in a Euclidean space; however, the metric compatibility condition \eqref{eqn:metric_compatible} is not satisfied. 
Thus, we utilize \Cref{cor:Brownian_motion_G_unimodular} to construct the corresponding Brownian motion.

\paragraph{Riemannian Metric}

The identity element is given by $e = x(0,0) = [R+r,\, 0,\, 0]^T \in \Re^3$.  
For $x(\theta_1,\phi_1), x(\theta_2,\phi_2) \in \mathbb{T}^2$, the group operation is defined as
\begin{align*}
    x(\theta_1, \phi_1)  x(\theta_2, \phi_2)
    = x(\theta_1 + \theta_2,\, \phi_1 + \phi_2),
\end{align*}
and the inverse element is $x^{-1}(\theta, \phi) = x(-\theta, -\phi)$. 

The tangent space at $x = e$, or the Lie algebra $\mathfrak{t}^2$, is spanned by
\begin{align*}
    \frac{\partial x}{\partial \theta}\bigg|_{x=e} = 
    \begin{bmatrix} 0\\ 0 \\ r \end{bmatrix}, \qquad
    \frac{\partial x}{\partial \phi}\bigg|_{x=e} =
    \begin{bmatrix} 0\\ R+r \\ 0 \end{bmatrix},
\end{align*}
so that
\[
    \mathfrak{t}^2 = \{ (0, \eta_2, \eta_3) \in \Re^3 \mid \eta_2, \eta_3 \in \Re \}.
\]
The inner product on $\mathfrak{t}^2$ is defined by
\begin{align*}
    \langle \eta, \zeta \rangle_{\mathfrak{t}^2} = \eta^T \zeta = \eta_2 \zeta_2 + \eta_3 \zeta_3,
\end{align*}
for any $\eta=(0,\eta_2,\eta_3)$ and $\zeta=(0,\zeta_2,\zeta_3)$ in $\mathfrak{t}^2$.  
An orthonormal basis of $\mathfrak{t}^2$ is chosen as
\begin{align*}
    e_\theta^{\liealgebra} = [0,0,1]^T, \qquad e_\phi^{\liealgebra} = [0,1,0]^T.
\end{align*}

Since $\mathbb{T}^2$ is not considered as a matrix group, the notational shorthand representing the pushforward by left translation with matrix multiplication in the last equality of \eqref{eqn:X_eta} does not apply, i.e., $(L_g)_*\eta \neq g\eta$.  
Instead, the pushforward of $\eta = (0,\eta_2,\eta_3) \in \mathfrak{t}^2$ by the left translation is computed by
\begin{align*}
    (L_{x(\theta, \phi)})_* \eta
    = \frac{d}{dt}\bigg|_{t=0} x\big(\theta+\theta'(t),\, \phi+\phi'(t)\big),
\end{align*}
where $\theta'(t)$ and $\phi'(t)$ are chosen such that $\theta'(0)=\phi'(0)=0$ and
\[
    \frac{d}{dt}\bigg|_{t=0} x(\theta'(t), \phi'(t)) = \eta.
\]
For example, we can choose
\begin{align*}
    \theta'(t) = \frac{\eta_3}{r}t, \quad \phi'(t) = \frac{\eta_2}{R+r}t.
\end{align*}
By expanding the derivative, we obtain
\begin{align*}
    (L_{x(\theta, \phi)})_* \eta
    = \frac{\eta_3}{r} \frac{\partial x(\theta,\phi)}{\partial \theta}
    + \frac{\eta_2}{R+r} \frac{\partial x(\theta,\phi)}{\partial \phi}.
\end{align*}

Using this result, the left-invariant metric is given by
\begin{align}
    \g(v,w) = \pair{ (L_{x^{-1}})_* v, (L_{x^{-1}})_* w}_{\mathfrak{t}^2}, \label{eqn:metric_Torus_left}
\end{align}
for $v,w\in T_x\mathbb{T}^2$, and the left-invariant orthonormal basis on $T_x\mathbb{T}^2$ is given by
\begin{align*}
    E_\theta & = (L_{x(\theta,\phi)})_* e_\theta^{\liealgebra}
          = \frac{1}{r} \frac{\partial x(\theta,\phi)}{\partial \theta},\\
    E_\phi & = (L_{x(\theta,\phi)})_* e_\phi^{\liealgebra}
          = \frac{1}{R+r} \frac{\partial x(\theta,\phi)}{\partial \phi}.
\end{align*}

We compare this left-invariant metric \eqref{eqn:metric_Torus_left} with the induced metric \eqref{eqn:metric_Torus_induced}.
Given that $\partial_\theta = r E_\theta$ and $\partial_\phi = (R+r) E_\phi$, we have
\begin{gather*}
    \g_{\theta\theta} = \g(rE_\theta, rE_\theta) = r^2, \quad
    \g_{\theta\phi} = 0, \quad
    \g_{\phi\phi} = (R+r)^2,
\end{gather*}
which is distinct from \eqref{eqn:metric_Torus_induced} unless $\theta=0$. 
As such, \eqref{eqn:metric_compatible} is not satisfied.

The corresponding Laplace--Beltrami operator is
\begin{align*}
    \Delta f= \frac{1}{r^2} \frac{\partial^2 f}{\partial \theta^2} + \frac{1}{(R+r)^2} \frac{\partial^2 f}{\partial \phi^2}.
\end{align*}

\paragraph{SDE for Brownian Motion}
From \eqref{eqn:SDE_Brownian_G_unimodular} of \Cref{cor:Brownian_motion_G_unimodular}, the stochastic differential equation in the Stratonovich form for Brownian motion on $\mathbb{T}^2$ is
\begin{align}
    d x(\theta,\phi) = \frac{1}{r} \frac{\partial x(\theta,\phi)}{\partial \theta}\circ dW_1 + \frac{1}{R+r} \frac{\partial x(\theta,\phi)}{\partial \phi}\circ dW_2. \label{eqn:SDE_Brownian_Torus_left}
\end{align}
An equivalent It\^{o} formulation follows from \eqref{eqn:SDE_Brownian_Ito_G}, interpreted according to the intrinsic geometry of the embedded torus in $\Re^3$.
It is important to note that \eqref{eqn:SDE_Brownian_Torus_left} describes Brownian motion with respect to the left-invariant metric \eqref{eqn:metric_Torus_left}, whereas \eqref{eqn:SDE_Brownian_Torus_1}--\eqref{eqn:SDE_Brownian_Torus_4} are constructed by the induced metric \eqref{eqn:metric_Torus_induced}.

\subsection{Hyperbolic Space: Upper Half--Space Model}\label{sec:Hyper}

We now develop the Brownian motion on the $n$-dimensional hyperbolic space, given by
\[
\Hyper^n = \{ (x_1,\dots,x_{n}) \in \Re^n \mid x_n>0 \},
\]
which is formulated by the standard upper half-space model.
This is endowed with the Riemannian metric
\begin{align}
    \g = \frac{1}{x_n^2} I_{n\times n}, \label{eqn:metric_Hn}
\end{align}
and we use the results of \Cref{thm:Brownian_motion}.

\paragraph{Orthonormal Frame}
At a point $x\in\Hyper^n$, the tangent space $T_x\Hyper^n$ is identified with $\Re^n$.
Thus, a global orthonormal frame is given by
\begin{align*}
    E_i & = x_n \frac{\partial}{\partial x_i},
\end{align*}
for $i\in \mathcal{I} \triangleq \{1,\ldots,n\}$.

The Christoffel symbols of the metric \eqref{eqn:metric_Hn} are
\begin{gather*}
    \Gamma^{i}_{i n} = \Gamma^{i}_{n i} = -\frac{1}{x_n},\quad 
    \Gamma^{n}_{ii} = \frac{1}{x_n}, \quad \text{for $i\in\mathcal{I}\setminus\{n\}$},\\
    \Gamma^{n}_{nn} = -\frac{1}{x_n},
\end{gather*}
and all others vanish.
From these, the covariant derivatives of the orthonormal frame are
\begin{gather*}
    \nabla_{E_i} E_i = E_n, \quad \text{for $i\in\mathcal{I}\setminus\{n\}$},\\ 
    \nabla_{E_n} E_n = 0.
\end{gather*}

\paragraph{Laplace--Beltrami Operator}
Using \eqref{eqn:laplace_Ei}, the Laplace--Beltrami operator on $\Hyper^n$ is
\begin{align}
    \Delta f
    &= x_n^2 \sum_{i=1}^{n} \frac{\partial^2 f}{\partial x_i^2}
       -(n-2) x_n \frac{\partial f}{\partial x_n}.
    \label{eqn:Laplace_Hn}
\end{align}

\paragraph{SDE for Brownian Motion}
By \Cref{thm:Brownian_motion}, the Stratonovich stochastic differential equation for Brownian motion on $\Hyper^n$ is
\begin{align}
    dx_i &= x_n \circ dW_i, \quad \text{for $i\in\mathcal{I}\setminus\{n\}$},\\
    dx_n &= -\frac12 (n-1)x_n dt + x_n\circ dW_n.
\end{align}
Equivalently, the It\^{o} formulation is
\begin{align}
    dx_i &= x_n dW_i,
\end{align}
for all $i\in\mathcal{I}$.

\subsection{$n$-Sphere}\label{sec:n-Sphere}

We develop Brownian motion on the $n$-sphere $\Sph^n=\{x\in\Re^{n+1}\mid x^Tx=1\}$ embedded in $\Re^{\bar n}$ with $\bar n = n+1$,
using \Cref{thm:Brownian_motion_embedded}.

\paragraph{Riemannian Metric}

The metric is induced from the standard Euclidean inner product in $\Re^{\bar n}$, and the orthonormal basis of $\Re^{\bar n}$ is chosen as the standard basis of $\Re^{\bar n}$. 
The tangent space at $x$ is $T_x \Sph^n=\{v\in\Re^{n+1}\mid x\cdot v=0\}$, and the orthogonal projection is
\begin{equation}
    P(x)=I_{\bar n \times \bar n}-x x^T, \label{eqn:proj_Sn}
\end{equation}
where $I_{\bar n\times \bar n} \in\Re^{\bar n\times \bar n}$ is the identity matrix. 
We have $P^2=P$, $P^T=P$, and $\mathrm{tr}[P(x)]=n$.

\paragraph{Mean Curvature}

For $X,Y\in \mathfrak{X}(\Sph^n)$, let their extensions be $\bar X,\bar Y\in\mathfrak{X}(\Re^{\bar n})$. 
From \eqref{eqn:cov_deriv_proj}, the covariant derivative is given by
\begin{equation}
    \nabla_X Y = P(x)(\bar\nabla_{\bar X}\bar Y)=\bar\nabla_{\bar X}\bar Y - x\,x^T\bar\nabla_{\bar X}\bar Y. \label{eqn:cov_driv_Sn}
\end{equation}
Since the covariant derivative $\bar \nabla_{\bar X} \bar Y $ in $\Re^{\bar n}$ is the usual directional derivative of $\bar Y$ along $\bar X$, we have
\begin{align}
    x^T \bar\nabla_{\bar X} \bar Y =  \sum_{i=1}^{\bar n} \parenth{\sum_{j=1}^{\bar n} \deriv{\bar Y_i}{x_j}\bar X_j} x_i
    = \sum_{i,j=1}^{\bar n} \deriv{\bar Y_i}{x_j} X_j x_i. \label{eqn:tmp0}
\end{align}
We simplify this using $Y(x)\cdot x = \bar Y(x)\cdot x = \sum_{i=1}^{\bar n} \bar Y_i x_i = 0$.
Taking its derivative with respect to $x_j$, 
\begin{align*}
    \sum_{i=1}^{\bar n} \left( \deriv{\bar Y_i}{x_j} x_i + \bar Y_i \deriv{x_i}{x_j} \right) =
    \sum_{i=1}^{\bar n} \deriv{\bar Y_i}{x_j} x_i + \bar Y_j = 0,
\end{align*}
which is substituted into \eqref{eqn:tmp0} to obtain 
\begin{align*}
    x^T \nabla_{\bar X} \bar Y = - \sum_{j=1}^{\bar n} \bar Y_j \bar X_j = -X\cdot Y.
\end{align*}
Thus, from \eqref{eqn:cov_driv_Sn},
\begin{align*}
    \nabla_X Y = \bar\nabla_{\bar X} \bar Y + x (X\cdot Y). 
\end{align*}
By Gauss’ formula, the second fundamental form on $\Sph^n$ is 
\[
\mathrm{II}(X,Y)=\bar\nabla_{\bar X}\bar Y-\nabla_X Y = -x(X\cdot Y).
\]
Taking the trace over an orthonormal basis as shown in \eqref{eqn:tr_proj}, and using $\mathrm{tr}[P(x)]=n$,
we obtain the mean curvature vector as
\begin{align*}
    H(x) & =\sum_{i=1}^n \mathrm{II}(E_i,E_i)
         = -x\sum_{i=1}^{\bar n}\langle P(x)e_i, P(x)e_i\rangle\\
         & = -x\,\mathrm{tr}(P(x)) = - nx.
\end{align*}

\paragraph{Drift term in \eqref{eqn:SDE_Bro_Emb}}

Next, we compute the drift term of the Stratonovich form, \eqref{eqn:SDE_Bro_Emb}.
Using \eqref{eqn:cov_deriv_proj} and \eqref{eqn:proj_Sn},
\begin{equation*}
    \nabla_{Pe_i}{P e_i} 
     = P \parenth{ \bar\nabla_{Pe_i} Pe_i} = - P \braces{\bar\nabla_{Pe_i}  (x_i x)}.
\end{equation*}
Applying the Leibniz rule \eqref{eqn:cov_deriv_Leibniz},
\begin{align}
    \nabla_{Pe_i}{P e_i}& = - P \braces{((Pe_i)[x_i])x + x_i \bar\nabla_{P e_i} x}.\label{eqn:tmp_Sn}
\end{align}
But,
\begin{align*}
    (Pe_i)[x_i] & = \sum_{j=1}^{\bar n} \deriv{x_i}{x_j} e_j^T Pe_i = e_i^T P e_i = 1 - x_i^2, \\
    \bar\nabla_{Pe_i} x & = \sum_{j=1}^{\bar n} \deriv{x}{x_j} e_j^T P e_i = \parenth{\sum_{j=1}^{\bar n}e_j e_j^T} Pe_i = P e_i.
\end{align*}
Substituting these back to \eqref{eqn:tmp_Sn}, and using $Px=0$ and $P^2=P$,
\begin{align*}
    \nabla_{Pe_i}{P e_i} & = - P ( (1-x_i^2) x + x_i Pe_i) = -x_i P e_i.
\end{align*}
Finally, taking the sum, 
\begin{equation*}
    \sum_{i=1}^{\bar n} \nabla_{Pe_i}{Pe_i} = -\sum_{i=1}^{\bar n} x_i P e_i = -P(x)x=0.
\end{equation*}
Therefore, the drift term of \eqref{eqn:SDE_Bro_Emb} vanishes. 

\paragraph{SDE for Brownian Motion}
By \Cref{thm:Brownian_motion_embedded}, stochastic differential equations for Brownian motion on $\Sph^n$ are given by
\begin{gather}
d x = \sum_{i=1}^{\bar n} P(x) e_i \circ dW_i,\\
d x = -\frac{n}{2}x dt + \sum_{i=1}^{\bar n} P(x) e_i dW_i. \label{eqn:SDE_Brownian_Sn_Ito}
\end{gather}

Using It\^{o}’s rule and \eqref{eqn:SDE_Brownian_Sn_Ito}, we can show that
\begin{align*}
    d(x^Tx)& =(dx)^Tx + x^T(dx) + (dx)^T(dx)\\
           & = \big(-n + \mathrm{tr}[P(x)]\big)dt = (-n+n)\,dt = 0,
\end{align*}
which verifies that the sample trajectories of \eqref{eqn:SDE_Brownian_Sn_Ito} remain on $\Sph^n$.

\subsection{Special Orthogonal Group}\label{sec:SO3}

Here, we develop Brownian motion on $\SO3=\{R\in\Re^{3\times 3}\,|\, R^TR=I_{3\times 3},\; \mathrm{det}[R]=1\}$.
Since $\SO3$ is a unimodular group embedded in $\Re^{3\times 3}$, whose left-invariant metric is identical to the induced metric, i.e., \eqref{eqn:metric_compatible} holds, we use \Cref{cor:Brownian_motion_G_unimodular_Emb}.

\paragraph{Riemannian Metric}

The Lie algebra is $\so3=\{S\in\Re^{3\times 3}\,|\, S^T = -S\}$ with the bracket $[\eta, \xi] = \eta\xi - \xi\eta$ for $\eta,\xi\in\so3$.
Let the hat map $\wedge:\Re^3\rightarrow \so3=\{S\in\Re^{3\times 3}\,|\, S^T = -S\}$ be defined such that $\hat x y = x \times y$ for all $x,y\in\Re^3$. 
Then, $\so3$ is isomorphic to $\Re^3$ with the vector cross product in $\Re^3$. 
The inverse of the hat map is denoted by the vee map $\vee:\so3\rightarrow\Re^3$.

For $\eta,\xi\in\so3$, the inner product on $\so3$ is chosen as
\begin{align}
    \pair{\eta, \xi}_\liealgebra = \frac{1}{2}\mathrm{tr}[\eta^T \xi] = (\eta^\vee)^T(\xi^\vee). \label{eqn:metric_R33}
\end{align}
The factor $\frac{1}{2}$ simplifies the subsequent development. 
Let $\{e_1, e_2, e_3\}$ be the standard basis of $\Re^3$.
Then, an orthonormal basis of $\so3$ can be chosen as
\begin{align*}
    \{ e_1^\liealgebra, e_2^\liealgebra, e_3^\liealgebra \} =\{\hat e_1, \hat e_2, \hat e_3\},
\end{align*}
where we can verify that $\pair{\hat e_i, \hat e_j}_\liealgebra=\delta_{ij}$ with respect to the metric given by \eqref{eqn:metric_R33}.

The left-invariant Riemannian metric on $T_R\SO3$ is obtained by \eqref{eqn:metric_G}, i.e., for $V,W\in T_R\SO3$, 
\begin{align*}
    \g(V,W) = \pair{R^T V, R^TW}_\liealgebra = \frac12 \mathrm{tr}[V^T W],
\end{align*}
which is the Euclidean metric scaled by the factor $\frac12$. 
Thus, \eqref{eqn:metric_compatible} holds, and the metric is also denoted by $\g(\cdot, \cdot) = \pair{\cdot, \cdot}$. 
As shown in \eqref{eqn:frame_G}, an orthonormal frame on $T_R\SO3$ can be chosen as
\begin{align*}
    \{ E_1, E_2, E_3 \} = \{R\hat e_1, R\hat e_2, R\hat e_3\}.
\end{align*}

Next, we define an orthogonal projection from $\Re^{3\times 3}$ to $T_R\SO3$ as follows.
For $A\in\Re^{3\times 3}$, we have
\begin{align}
    \pair{ A, E_i} & = \frac{1}{2}\mathrm{tr}[ A^T R \hat e_i] = \frac{1}{2}\mathrm{tr}[\mathrm{skew}(R^T A)^T \hat e_i]\nonumber \\
                   & = \pair{ \mathrm{skew}(R^TA), \hat e_i}.
\end{align}
where $\mathrm{skew}:\Re^{3\times 3}\rightarrow\so3$ is defined as $\mathrm{skew}(B) = \frac{1}{2}(B-B^T)$ for $B\in\Re^{3\times 3}$.
The second equality is from the fact that the trace of the product of a symmetric matrix and a skew-symmetric matrix is always zero, i.e., only the skew-symmetric component of $R^TA$ contributes to the inner product, as $\hat e_i$ is skew-symmetric. 

Then, from \eqref{eqn:P}, the projection operator $P:\Re^{3\times 3}\rightarrow T_R\SO3$ applied to $A\in\Re^{3\times 3}$ is
\begin{align*}
    (P(R))(A)& = \sum_{i=1}^3 \pair{A, E_i} E_i\\
             &= R \parenth{ \sum_{i=1}^3  \pair{\mathrm{skew}(R^TA), \hat e_i}\hat e_i} \\
             & = R\, \mathrm{skew} (R^TA),
\end{align*}
where the last equality is from the fact that $\{\hat e_1, \hat e_2, \hat e_3\}$ is an orthonormal basis of $\so3$. 
This can be further rearranged into
\begin{align}
    (P(R))(A) = \frac{1}{2} (A - RA^T R), \label{eqn:P_SO3}
\end{align}

\paragraph{Mean Curvature Vector}
Next, we find the mean curvature vector from \eqref{eqn:H_G_unimodular}.
Let $X,Y\in\mathfrak{X}(\SO3)$ be left-invariant vector fields obtained by $X=R\eta_x$ and $Y=R\eta_y$ for $\eta_x,\eta_y\in\so3$.
Since the covariant derivative in $\Re^{3\times 3}$ can be interpreted as the usual directional derivative,
\begin{align}
    \bar \nabla_{\bar X} \bar Y & = \frac{d}{d\epsilon}\bigg|_{\epsilon =0} Y (R + \epsilon R\eta_x) = \frac{d}{d\epsilon}\bigg|_{\epsilon =0} (R + \epsilon R\eta_x)\eta_y\nonumber \\
                                & = R\eta_x \eta_y. \label{eqn:cov_deriv_proj_SO3}
\end{align}
Therefore, from \eqref{eqn:H_G_unimodular},
\begin{align*}
    H = \sum_{i=1}^3 \bar\nabla_{\bar E_i} \bar E_i = R \sum_{i=1}^3 \hat e_i^2 = -2R,
\end{align*}
where we have used the fact that $\sum_{i=1}^3 \hat e_i^2 = -2 I_{3\times 3}$.

\paragraph{SDE for Brownian Motion}

According to \Cref{cor:Brownian_motion_G_unimodular_Emb}, the stochastic differential equations for the Brownian motion on $\SO3$ embedded in $\Re^{3\times 3}$ are given by
\begin{gather}
    dR  = \sum_{i=1}^3  R\hat e_i \circ dW_i, \label{eqn:SDE_Brownian_SO3}\\
    dR  = - R \, dt + \sum_{i=1}^3 R\hat e_i dW_i. \label{eqn:SDE_Brownian_SO3_Ito}
\end{gather}
We can show $d(R^T R) = 0$ to ensure that the sample trajectory of the above stochastic differential equation evolves on $\SO3$. 

Since $\SO3$ is a unimodular Lie group, the drift term of the Stratonovich term vanishes as in \eqref{eqn:SDE_Brownian_SO3}.
But, as it is considered as an embedded manifold, the It\^{o} formulation of \eqref{eqn:SDE_Brownian_SO3_Ito} has a drift given by the mean curvature vector. 

The drift term obtained by the mean curvature vector,  namely $-Rdt$ has been referred to as the \textit{pinning} drift in~\cite{marjanovic2018numerical,piggott2016geometric}.
In this paper, we did not include the pinning drift to enforce the constraint $d(R^T R)=0$. 
Instead, it is naturally obtained by making sure that the generator coincides with the intrinsic Laplace--Beltrami operator on $\SO3$. 

\paragraph{Monte-Carlo Simulation}

We verify a property of the Brownian motion on $\SO3$ numerically as follows.
Let $f(R) = R_{ij} = e_i^T R e_j$.
We check the evolution of its mean using the generator.
The Hessian of $f(R)$ vanishes in the ambient $\Re^{3\times 3}$.
Therefore, from \eqref{eqn:laplace_Ei_Hess} and \eqref{eqn:Hess_proj},
\begin{align*}
    \mathcal{A}f = \frac{1}{2}\Delta f(R) & = \frac{1}{2}\sum_{i=1}^3 (\mathrm{II}(E_i, E_i))[f].
\end{align*}
Using the definition of the mean curvature vector \eqref{eqn:mean_curvature}, this reduces to
\begin{align*}
    \frac{1}{2}\Delta f(R) & = \frac{1}{2} H[f] = -R[f] = -R_{ij}.
\end{align*}
Given that $\frac{1}{2} \Delta R_{ij} = - R_{ij}$ is satisfied element-wise, this implies that $\frac{1}{2}\Delta R = -R$.

Let $M_t = \E[R_t| R_0 = R_{init}] \in\Re^{3\times 3}$ be the mean of the rotation matrix following \eqref{eqn:SDE_Brownian_SO3}, initialized with $R_0 = R_{init}\in\SO3$.
From Dynkin's formula~\cite[Section 7.4]{oksendal2003stochastic},
\begin{align}
    \dot M_t = \E[\mathcal{A} R_t] = \E[\frac{1}{2}\Delta R_t] = \E[-R_t] = - M_t. \label{eqn:M_dot}
\end{align}
Therefore, $M_t = e^{-t} M_0$ with $M_0=R_{init}$, indicating that each element of the mean of the rotation matrix decays exponentially.
This can be interpreted as the resulting probability density converging exponentially to the uniform distribution on $\SO3$.

Furthermore, the solution of \eqref{eqn:M_dot} can be utilized to show that its Frobenius norm $\|M_t\|_F = \sqrt{\mathrm{tr}(M_t^T M_t)}$ evolves according to
\begin{align}
    \| M_t \|_F = e^{-t} \|M_0\|_F = \sqrt{3} e^{-t}. \label{eqn:log_Mt}
\end{align}

Now, we verify \eqref{eqn:log_Mt} with a Monte-Carlo simulation.
We generate $N=10,000$ sample trajectories of \eqref{eqn:SDE_Brownian_SO3}, each initialized with $R_0 = I_{3\times 3}$, to approximate $M_t$ by $M_t \approx\frac1N \sum_{i=1}^N R^i_t$.
Then, the logarithm of its Frobenius norm is computed over time and compared with the theoretical prediction from \eqref{eqn:log_Mt}.
The results shown in \Cref{fig:Brownian_SO3} exhibit excellent agreement.\footnote{The simulation code was initially drafted by Google Gemini and subsequently verified and revised by the first author.}

\begin{figure}
    \centerline{\includegraphics[width=0.85\columnwidth]{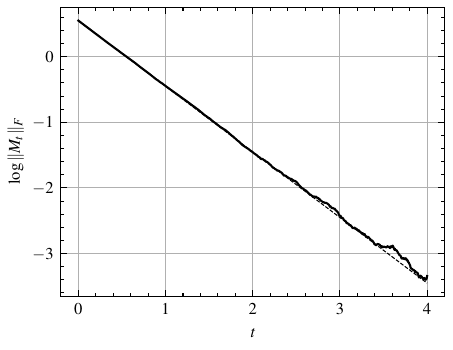}}
    \caption{Numerical verification of the Brownian motion on $\SO3$: the Frobenius norm of the mean of rotation matrices (solid) sampled by \eqref{eqn:SDE_Brownian_SO3} is compared against the theoretical prediction (dashed) from \eqref{eqn:log_Mt}.}
    \label{fig:Brownian_SO3}
\end{figure}

\subsection{Affine Group}\label{sec:Affine}

We formulate Brownian motion on the affine group of $\Re$ given by
\[
    \Aff(\Re)=\left\{\begin{bmatrix}a&b\\0&1\end{bmatrix}:a>0,\,b\in\Re\right\}\subset\Re^{2\times2},
\]
where we consider a component for orientation-preserving transformation, i.e., $a>0$. 
While $\Aff(\Re)$ is embedded in $\Re^{2\times 2}$, the condition \eqref{eqn:metric_compatible} does not hold.
Also it is not unimodular.
Therefore, we use the results of \Cref{thm:Brownian_motion_G}.

\paragraph{Riemannian Metric}

The Lie algebra is
\[
    \mathfrak{aff}(\Re)=\left\{\begin{bmatrix}u & v\\0&0\end{bmatrix}:u,v\in\Re\right\},
\]
for which the inner product is chosen as
\begin{align}
    \pair{\eta, \xi}_\liealgebra = \frac{1}{2}\mathrm{tr}[\eta^T \xi]. \label{eqn:metric_Aff}
\end{align}
where $\eta,\xi \in \mathfrak{aff}(\Re)$.
Specifically, if $\eta=\begin{bmatrix} u & v \\ 0 & 0\end{bmatrix}$ and $\xi=\begin{bmatrix} w & z \\ 0 & 0 \end{bmatrix}$ for $u,v,w,z\in\Re$, we have
$\langle \eta, \xi\rangle_\liealgebra = \frac{1}{2} (uw + vz)$. 
An orthonormal basis of $\mathfrak{aff}(\Re)$ with respect to \eqref{eqn:metric_Aff} is chosen as
\[
    e_1^\liealgebra = \begin{bmatrix}\sqrt{2}&0\\0&0\end{bmatrix}, \qquad
    e_2^\liealgebra = \begin{bmatrix}0&\sqrt{2}\\0&0\end{bmatrix}.
\]

The left-invariant metric on $T_g\Aff(\Re)$ is given by \eqref{eqn:metric_G}, i.e., 
\begin{align*}
    \g(A, B) = \pair{g^{-1}A, g^{-1} B}_{\liealgebra} = \frac{1}{2}\mathrm{tr}[A^T g^{-T} g^{-1} B],
\end{align*}
for $A, B \in T_g \Aff(\Re)$. 
Thus, an orthonormal basis of $T_g\Aff(\Re)$ is chosen as
\begin{align}
    \{E_1, E_2\} = \{g e_1^\liealgebra, g e_2^\liealgebra\} = \{ a e_1^\liealgebra, a e_2^\liealgebra\},
\end{align}
which satisfies $\g(E_i, E_j)=\delta_{ij}$. 

\paragraph{Drift Term}
Now, we find the drift term of the Stratonovich form \eqref{eqn:SDE_Brownian_G}, given by the half of $J$ introduced in \eqref{eqn:J}.

For $\eta=\begin{bmatrix} u & v \\ 0 & 0 \end{bmatrix}\in\mathfrak{aff}(\Re)$, we have
\begin{align*}
    \ad_\eta e_1^\liealgebra & = \eta e_1^\liealgebra - e_1^\liealgebra \eta = - v e_2^\liealgebra, \\
    \ad_\eta e_2^\liealgebra & = \eta e_2^\liealgebra - e_2^\liealgebra \eta = u e_2^\liealgebra,
\end{align*}
such that $\mathrm{tr}[\ad_\eta] = \sum_{i=1}^2 \langle e_i^\liealgebra, \ad_\eta e_i^\liealgebra \rangle_\liealgebra = u $. 
From \eqref{eqn:Jeta}, we have
\begin{align*}
    \pair{ J, \eta}_\liealgebra = -\mathrm{tr}[\ad_\eta] = -u.
\end{align*}
Substituting $\eta = \frac{1}{\sqrt{2}}(u e_1^\liealgebra + v e_2^\liealgebra)$ into the above, we obtain
\begin{align*}
    J = -\sqrt2 e_1^\liealgebra. 
\end{align*}

\paragraph{SDE for Brownian Motion}

According to \Cref{thm:Brownian_motion_G}, the stochastic differential equations for the Brownian motion on $\Aff(\Re)$ embedded in $\Re^{2\times 2}$ are given by
\begin{gather}
    g^{-1} dg  = -\frac{\sqrt2}{2} e_1^\liealgebra\, dt + \sum_{i=1}^2 e_i^\liealgebra \circ dW_i. \label{eqn:SDE_Brownian_Aff}
\end{gather}
Since $\Aff(\Re)$ is not a unimodular group, the Stratonovich formulation has a drift term. 
The It\^{o} formulation can be similarly obtained by \eqref{eqn:SDE_Brownian_Ito_G}.

\section{Conclusions}

This paper presented a unified geometric framework for formulating Brownian motion on manifolds, including intrinsic Riemannian manifolds, embedded submanifolds, and Lie groups.  
The construction was based on the principle that Brownian motion is the stochastic process whose generator equals one-half of the Laplace--Beltrami operator.  
By injecting isotropic noise along orthonormal frames and deriving the necessary drift term to satisfy this generator condition, both Stratonovich and It\^{o} stochastic differential equations were obtained in explicit geometric form.  

The results reveal that the drift term admits a clear geometric interpretation: it corresponds to the covariant derivatives of the frame fields for intrinsic manifolds, the mean curvature vector for embedded manifolds, and the adjoint-trace term for Lie groups, which vanishes for unimodular cases.  
These findings unify previously disparate formulations of Brownian motion under a single geometric interpretation, bridging intrinsic, extrinsic, and algebraic perspectives.  

Beyond theoretical clarity, the framework provides a consistent foundation for stochastic modeling and numerical simulation on nonlinear configuration spaces arising in mechanics, control, and robotics.  
Future work may extend this approach to more general diffusion processes, manifolds with boundary or non-metric connections, and data-driven stochastic modeling on manifolds.

Finally, the outcomes of this paper can be summarized as a systematic method for constructing Brownian motion on an arbitrary Riemannian manifold:
\begin{itemize}
    \item \textit{Diffusion:} The diffusion term is obtained by injecting noise along each axis of an orthonormal frame.
        Depending on the type of manifold, an orthonormal frame can be chosen as follows:
        \begin{itemize}
            \item \emph{Riemannian manifold:} The frame can be chosen locally, often specified in coordinates.
            \item \emph{Embedded Riemannian manifold:} The ambient Euclidean frame can be projected onto the tangent space of the manifold to obtain the pseudo-frame, as presented in \Cref{def:pseudo-frame}.  
                This formulation is globally defined, though the number of diffusion terms exceeds the manifold dimension.
            \item \emph{Lie group:} A basis of the Lie algebra can be left-translated to construct a global orthonormal frame of the tangent bundle, as shown in \eqref{eqn:frame_G}.            
        \end{itemize}
    \item \textit{Drift in the Stratonovich formulation:} 
        The drift term is chosen so that the resulting infinitesimal generator coincides with one-half of the Laplace--Beltrami operator.
        For the Stratonovich formulation, the drift term is given as follows:
        \begin{itemize}
            \item \emph{Riemannian manifold:} Negative one-half of the sum of the covariant derivatives of each frame vector along itself. 
            \item \emph{Embedded manifold:} Negative one-half of the sum of the covariant derivatives of each pseudo-frame vector along itself. 
            \item \emph{Lie group:} One-half of the sum of the co-adjoint operators of each Lie algebra basis element along itself.  
                The drift term vanishes when the group is unimodular.
        \end{itemize}
    \item \textit{Drift in the It\^{o} formulation:}
        \begin{itemize}
            \item \emph{Riemannian manifold or Lie group:} The drift term vanishes. 
            \item \emph{Embedded Riemannian manifold or embedded Lie group satisfying \eqref{eqn:metric_compatible}:} The drift term equals one-half of the mean curvature vector.  
                For any unimodular group, the mean curvature vector simplifies as in \eqref{eqn:H_G_unimodular}.
        \end{itemize}
\end{itemize}

\bibliographystyle{ieeetran}
\bibliography{ref}

\end{document}